\numberwithin{equation}{section}
\numberwithin{figure}{section}
\theoremstyle{plain}
\newtheorem{thm}{Theorem}
\theoremstyle{plain}
\newtheorem*{prop*}{Proposition}
\theoremstyle{definition}
\newtheorem{defn}[thm]{Definition}
\theoremstyle{remark}
\newtheorem*{rem*}{Remark}
\theoremstyle{plain}
\newtheorem{lem}[thm]{Lemma}
\theoremstyle{remark}
\newtheorem*{claim*}{Claim}
\theoremstyle{plain}
\newtheorem{prop}[thm]{Proposition}
\theoremstyle{plain}
\newtheorem{cor}[thm]{Corollary}
\theoremstyle{definition}
\newtheorem{example}[thm]{Example}
\title{One Dimensional t.t.t Structures}
\author{Daniel Lowengrub}
\address{Department of Mathematics\\ The Hebrew University of Jerusalem\\
  Jerusalem, 91904, Israel}
\email{lowdanie@gmail.com}
\begin{document}

\maketitle

\section{Introduction}

The notion of a first order topological structure was introduced by
Pillay \cite{P1} as a generalization of the notion of an o-minimal
structure. The idea is to provide a general framework in which model
theory can be used to analyze a topological structure whose topology
isn't necessarily induced by a definable order. In the o-minimal case,
the topology is generated from a basis where each basis set can be
defined by substituting the variables $y_{1}$ and $y_{2}$ by suitable
parameters in the following formula\[
\phi(x,y_{1},y_{2})=y_{1}<x<y_{2}\]

A first order topological structure generalizes this to the case where
$\phi$ is some arbitrary formula with more than one variable.

Pillay also introduced the notion of topologically totally transcendental
(t.t.t) structures which have the additional property that any definable
set has a finite number of connected components. For example, by definition
o-minimal structures are t.t.t.

In the previously mentioned paper, Pillay proved that one dimensional
t.t.t structures have some characteristics in common with o-minimal
structures such as the exchange property. Furthermore, he showed that
if the topology of a one dimensional t.t.t structure is induced by
a definable dense linear ordering then the structure is o-minimal.

In this paper we'll focus on $\omega$-saturated one dimensional t.t.t
structures and prove that under a few additional topological assumptions,
such structures are composed of o-minimal components in a relatively
simple manner.

Our main result which will be proved in section \ref{sec:Structures-With-Splitting}
will be to show that if we assume that removing any point from the
structure splits it into at least two connected components, then the
structure must be a one dimensional simplex of a finite number of
o-minimal structures:
\begin{thm}
  Let $M$ be a 1-dimensional connected $\omega$-saturated t.t.t structure
  such that for each point $x\in M$, $M\backslash\{x\}$ has at least
  two definably connected components. Then there exists a finite set
  $X\subset M$ such that each of the finite number of connected components
  of $M\backslash X$ are o-minimal.\label{thm:intro-splitting}
\end{thm}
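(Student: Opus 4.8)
The plan is to reduce the theorem to Pillay's o-minimality criterion quoted above from \cite{P1}: it suffices to produce a finite $X\subseteq M$ so that every definably connected component $C$ of $M\setminus X$ carries a definable dense linear order whose order topology coincides with the subspace topology inherited from $M$; each such $C$ is then o-minimal. The set $X$ will be a superset of the set of ``branch points'', and the order on each $C$ will be recovered from a definable separation relation. Two global consequences of the hypothesis drive the argument: since removing a point always disconnects, no component can be ``circular'' (a circle has a non-separating point), and one-dimensionality will force the branch points to be finite.

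For $y\in M$ let $n(y)$ be the number of definably connected components of $M\setminus\{y\}$; this is finite by the t.t.t hypothesis. I will use that for t.t.t structures definable connectedness is definable in families (a standard consequence of the finiteness of the number of definably connected components together with $\omega$-saturation). Consequently the separation relation
\[
S(y;z,w)\ :\equiv\ z,w\in M\setminus\{y\}\ \text{lie in distinct definably connected components of } M\setminus\{y\}
\]
is definable, and the branch locus $X_{0}=\{y:n(y)\ge 3\}$ is a definable subset of $M$.

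The first main step is to show $X_{0}$ is finite. Suppose not. Using $\omega$-saturation together with one-dimensionality I expect an infinite definable subset of $M$ to contain a definable connected open arc $U$, so every point of $U$ would be a branch point. Fix a generic $x\in U$ and a small connected neighborhood $V\subseteq U$; besides the two components of $V\setminus\{x\}$ running ``along'' $U$, genericity yields a third local component $B_{x}$, a one-dimensional branch germ at $x$. Letting $x$ vary produces a definable one-parameter family $E=\{(x,t):x\in U,\ t\in B_{x}\}$ with one-dimensional base and one-dimensional fibres, so $\dim E=2$ by the exchange property. If the second projection $E\to M$ is generically finite-to-one its image has dimension $2$ inside the one-dimensional space $M$, a contradiction; hence the branches $B_{x}$ must coincide for infinitely many $x$, and then the attaching points of such a fixed branch to $U$ are isolated, so only finitely many $x$ are genuine branch points. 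Either way $X_{0}$ is finite. I expect this dimension count -- cleanly separating the ``genuinely new branch'' case from the ``coinciding branch'' case -- to be the main obstacle of the argument.

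Set $X$ equal to $X_{0}$ together with the finitely many remaining degenerate points, and fix a definably connected component $C$ of $M\setminus X$; since $X$ is finite and points are closed, $C$ is open in $M$ and every $y\in C$ has $n(y)=2$. I first check, using $n(y)\ge 2$ on all of $M$, that $C$ contains no loop: a circular $C$ would, on removing a suitable interior point, remain connected and violate the hypothesis, once one accounts for how $C$ attaches to the rest of $M$ through the finitely many points of $X$. Granting this, I restrict $S$ to $C$ to form the definable betweenness relation $a\,|\,b\,|\,c$, meaning $a$ and $c$ lie in different components of $C\setminus\{b\}$. Because every point of $C$ is exactly $2$-valent and $C$ is connected and loop-free, this betweenness satisfies the usual linear axioms; choosing an orientation and propagating it along $C$ by connectedness yields a definable total order $<$, dense by $\omega$-saturation and one-dimensionality. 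Finally I verify that the order-intervals form a basis for the original subspace topology on $C$ (each small connected neighborhood is an open interval and conversely), so that $<$ is a definable dense linear order inducing the topology of $C$. Pillay's criterion then gives that $C$ is o-minimal, completing the proof.
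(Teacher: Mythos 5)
Your overall skeleton matches the paper's (produce a finite exceptional set, put a definable dense linear order on each remaining component, and conclude via Pillay's criterion \cite[6.2]{P1}), but two of your steps have genuine gaps, and the first one is fatal to everything downstream. You dismiss the definability of the separation relation $S(y;z,w)$ as ``a standard consequence of the finiteness of the number of definably connected components together with $\omega$-saturation.'' It is not standard: this is the central technical result of the paper (Proposition \ref{prop:definable-relation}), and its proof occupies an entire subsection. The difficulty is that although each component of $M\backslash\{y\}$ is definable with its own parameters, getting a single formula uniform in $y$ requires the type-by-type construction of the relations $R_{p}$ and sets $D_{p}$, the dichotomy between $d_{M}(M\backslash\{x\})>2$ (which forces $x\in acl(\emptyset)$, Proposition \ref{prop:bigger-2-acl}) and $d_{M}(M\backslash\{x\})=2$ (Proposition \ref{prop:equals-2}), and a compactness argument that both $\sim_{x}$ and $\nsim_{x}$ are $\bigvee$-definable; moreover the argument uses the standing hypothesis that every point disconnects $M$. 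Without this, your branch locus $X_{0}$, your family of branch germs $B_{x}$, and your order $<$ are not even definable, so the dimension count and the appeal to Pillay's criterion cannot get started.

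Second, your finiteness argument for $X_{0}$ does not close. In the case where the second projection of $E$ is not generically finite-to-one, the correct conclusion is only that infinitely many points $t$ each lie in infinitely many of the branches $B_{x}$; this does not imply that ``the branches $B_{x}$ coincide,'' and the subsequent claim that ``the attaching points of such a fixed branch to $U$ are isolated'' is unexplained and is where the real work would have to happen. (Your first case is fine modulo exchange, which Pillay does provide for 1-dimensional t.t.t structures, but it rests on the definability gap above.) The paper avoids acl-dimension entirely: it introduces the notion of a \emph{locally flat} point and shows (Proposition \ref{prop:finite-not-flat}) that the non-locally-flat points form a finite definable set, the key mechanism being Lemma \ref{lem:infinite-sep} / Lemma \ref{lem:not-all-connected-k}, which iteratively constructs points pairwise separated by earlier ones and, via $\omega$-saturation, produces infinitely many pairwise disjoint definable clopen subsets, contradicting t.t.t. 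The same notion also disposes of your ``loop-free'' worry, which you only gesture at: Lemma \ref{lem:flat-two-components} shows directly that removing a locally flat point from any connected open definable set leaves exactly two components, which is what makes the betweenness relation on a component $C$ behave linearly; your argument that a circular $C$ ``would violate the hypothesis once one accounts for how $C$ attaches to the rest of $M$'' is precisely the non-trivial point, since the hypothesis concerns removing points from $M$, not from $C$.
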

In section \ref{sec:Structures-Without-Splitting} we'll analyze the
case where removing a point doesn't necessarily split the structure,
and will find two alternative topological properties which guarantee
that the structure is locally o-minimal. This is done by showing that
basis sets which are small enough can be split by removing a point.
\begin{thm}
  Let $M$ be a 1-dimensional $\omega$-saturated t.t.t structure
  such that one of the following holds:\label{thm:intro-no-splitting}
  \begin{enumerate}
  \item There exist a definable continuous function $F:M^{2}\rightarrow M$
    and a point $a\in M$ such that for each $x\in M$, $F(x,x)=a$
    and $F(x,\cdot)$ is injective.
  \item For every basis set $U$, $\vert bd(U)\vert=2$.
  \end{enumerate}
  Then for all but a finite number of points, for every point $x\in M$
  there's a basis set $U$ containing $x$ such that $U$ is o-minimal.
\end{thm}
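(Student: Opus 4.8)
The plan is to reduce both cases to the splitting situation already settled in Theorem \ref{thm:intro-splitting}. Since the target is purely local, it suffices to produce, around all but finitely many points $x$, a definably connected basis set $U\ni x$ on which \emph{every} interior point is a cut point, i.e. $U\setminus\{y\}$ has at least two definably connected components for each $y\in U$. Once such a $U$ is in hand I would treat it as a structure in its own right: being a definable subset of an $\omega$-saturated model, its induced structure is again $\omega$-saturated, while $1$-dimensionality, the t.t.t property, and definable connectedness are all inherited. Theorem \ref{thm:intro-splitting} then yields a finite $X\subset U$ whose complementary components are o-minimal; any $x\in U\setminus X$ lies in such a component, and an order-interval inside it is the desired o-minimal basis set.

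Under hypothesis (2) I would establish the cut-point property by a boundary count. Fix a definably connected basis set $U$ with $bd(U)=\{p,q\}$ and an interior point $y$; since the space has no isolated points, $y$ is a limit of $U\setminus\{y\}$, so $\overline{U\setminus\{y\}}=\overline{U}$ and hence $bd(U\setminus\{y\})=\{p,q,y\}$. If $U\setminus\{y\}$ were definably connected, this would exhibit a definably connected open definable set with a three-point boundary, contradicting the hypothesis \emph{provided} the boundary-size-$2$ constraint propagates from basis sets to definably connected open definable sets. This propagation is the delicate point here: a connected component of $U\setminus\{y\}$ need not be a basis set on the nose, so I expect to spend effort shrinking near $y$ to a genuine basis set that still carries $y$ on its boundary.

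Under hypothesis (1) I would exploit the family $g_{x}:=F(x,\cdot)$ of definable continuous injections with $g_{x}(x)=a$. The first step is a t.t.t lemma, in the spirit of Pillay's exchange result, that a definable continuous injection of a $1$-dimensional t.t.t structure is a local homeomorphism onto an open image; granting this, each $g_{x}$ is a homeomorphism from a small neighborhood of $x$ onto a neighborhood of the common value $a$. Consequently, for any two points $c$ and $z$, the composite $g_{z}^{-1}\circ g_{c}$ is a definable homeomorphism carrying a neighborhood of $c$ onto one of $z$ and sending $c\mapsto z$. This makes all points locally equivalent through the hub $a$, so cut-point behavior and o-minimality of a neighborhood transfer from any single point to every other point. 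It then suffices to produce one point whose small neighborhoods are separated by each interior point; I would argue that a genuinely $1$-dimensional connected t.t.t structure cannot fail this everywhere—uniform non-separation should contradict the finiteness of definably connected components together with $1$-dimensionality—so at least one such point exists and the transport maps spread the property to all of $M$.

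Finally I would assemble the local statements into the global one. Letting $B$ be the set of points with no o-minimal basis neighborhood, the previous steps show that each point outside a finite set has a connected basis neighborhood $U$ with $B\cap U$ contained in the finite set $X$ supplied by Theorem \ref{thm:intro-splitting}; invoking $\omega$-saturation to prevent $B$ from accumulating then forces $B$ to be finite. The main obstacle I anticipate is the local splitting lemma itself: under (2) the propagation of the two-point boundary condition to definably connected open sets, and under (1) both the local-homeomorphism property of the injections $g_{x}$ and the guarantee that at least one point separates its small neighborhoods. By comparison, the relativization of Theorem \ref{thm:intro-splitting} and the saturation argument bounding the exceptional set should be routine.
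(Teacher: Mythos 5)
Your high-level skeleton does match the paper's: inside every basis set, find a connected definable open set in which every point is a cut point, then apply the relativized splitting theorem (Theorem \ref{thm:remove-finite-o-minimal} together with the remark following it) to get o-minimal pieces. That reduction is sound. The problem is that the two steps you flag as ``delicate'' and defer are not technical details to be filled in later --- they are essentially the entire mathematical content of Section \ref{sec:Structures-Without-Splitting}. Under hypothesis (2), your boundary count requires the two-point-boundary property to propagate from basis sets to arbitrary definably connected open definable sets, but nothing in the hypothesis gives this: every basis set through $y$ may have \emph{both} of its boundary points inside $U$, in which case such a set is not clopen in $U\setminus\{y\}$ and no contradiction with connectedness arises. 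What is actually needed is a basis set $W$ with $y\in bd(W)$ whose second boundary point lies \emph{outside} $U$, and manufacturing such a $W$ is precisely what the paper's Proposition \ref{pro:every-basis-boundary-2} does, via the co-boundary function $f$, Lemma \ref{lem:connected-same-boundary}, Lemma \ref{lem:finite-basis-sets}, and a downward induction. Your proposal contains no substitute for that argument.

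Under hypothesis (1) the gap is more serious because your route is circular. The key lemma you posit --- that a definable continuous injection of a $1$-dimensional t.t.t structure is a local homeomorphism onto an open image --- is an invariance-of-domain statement that fails in general topological structures and is nowhere available here; indeed, in the paper even generic continuity of definable functions is proved only \emph{after} the theorem, as a consequence of local o-minimality, i.e.\ of the very conclusion you are trying to establish. (Note that the image of a small neighborhood of $x$ under $F(x,\cdot)$ is an infinite definable set, hence has interior, but $a=F(x,x)$ may well be one of its finitely many non-interior points, so your transport maps $g_z^{-1}\circ g_c$ need not exist.) Your second claim --- that a connected $1$-dimensional t.t.t structure must contain at least one point separating its small neighborhoods, by some appeal to finiteness of components --- is pure assertion; the paper never proves such a hypothesis-free statement, and instead uses $F$ itself in an entirely different way: the finite-fiber Lemma \ref{lem:finite-fiber} and the diagonal argument of Proposition \ref{prop:F-function} show that the co-boundary relation cannot accumulate on the diagonal, which then feeds into the same construction as in case (2) (Proposition \ref{prop:boundary-only-x}). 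Finally, your closing step applies $\omega$-saturation to the set $B$ of points with no o-minimal basis neighborhood, but $B$ is not given by a first-order condition (o-minimality quantifies over all definable sets), so compactness cannot be invoked on it; the finiteness of the exceptional set must instead come from a definable condition, as with the failure of local flatness in Proposition \ref{prop:finite-not-flat}.
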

An immediate corollary of part 1 of theorem \ref{thm:intro-no-splitting} is
that if an $\omega$-saturated one dimensional t.t.t structure admits
a topological group structure then it is locally o-minimal.

Towards the end of the section we'll prove a version of the monotonicity
theorem for locally o-minimal structures. This shows that locally
o-minimal structures share many characteristics with standard o-minimal
structures.

Theorem \ref{thm:intro-splitting} illustrates that the defining characteristic
of o-minimal structures within the general setting of $\omega$-saturated
one dimensional t.t.t structures isn't existence of the order itself,
but rather the ability to disconnect the structure by removing a point.

Theorem \ref{thm:intro-no-splitting} shows that even in the case
where an $\omega$-saturated one dimensional t.t.t structure isn't
o-minimal, it will at least be o-minimal on a local scale provided
that it has a rudimentary internal structure.

An important step in proving the theorems above will be to show that
the relation $a\sim_{x}b$, which says that $a$ and $b$ are in the
same connected component of $M\backslash\{x\}$, is definable.
\begin{prop*}
  Let $M$ be a 1-dimensional $\omega$-saturated t.t.t structure such
  that for each point $x\in M$, $M\backslash\{x\}$ has more than one
  connected component. Then the relation $a\sim_{x}b\subset M^{3}$
  is definable.
\end{prop*}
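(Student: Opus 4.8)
The plan is to reduce the definability of $\sim_x$ to the \emph{uniform} definability of the definably connected components of the fibres $M\setminus\{x\}$, and then to produce such a uniform definition using $\omega$-saturation together with the finiteness properties of t.t.t structures. First I record the two facts about a single fibre. Fix $x$. By the t.t.t hypothesis \cite{P1} the definable set $M\setminus\{x\}$ has only finitely many definably connected components $C_{1},\dots,C_{k}$; since there are finitely many, each $C_{i}$ is relatively clopen in $M\setminus\{x\}$, and one checks it is definable (the standard consequence of the t.t.t finiteness of components). Two points $a,b\neq x$ then satisfy $a\sim_{x}b$ exactly when they lie in the same $C_{i}$, equivalently when \emph{every} relatively clopen definable subset of $M\setminus\{x\}$ contains both or neither of them: if they share a component this is clear because a relatively clopen set is a union of components, and if they lie in distinct components then the component of $a$ is itself a clopen definable set separating them.

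The obstruction to turning this into a formula is that "relatively clopen definable subset" is a priori a second-order quantifier. I would remove it as follows. Openness is already first-order: using the formula $\phi(y,\bar z)$ that generates the basis of the topology, "$U$ is relatively open in $M\setminus\{x\}$" is expressible by a formula in the parameters defining $U$. The real work is to find a single formula $\theta(y;x,\bar w)$ such that for every $x$ and every tuple $\bar w$ the set $\theta(M;x,\bar w)$ is a relatively clopen subset of $M\setminus\{x\}$, and such that every individual component of $M\setminus\{x\}$ equals $\theta(M;x,\bar w)$ for some $\bar w$. I expect \textbf{this uniform definability of the components across the family $\{M\setminus\{x\}\}_{x\in M}$ to be the main obstacle.} The strategy is the usual one for promoting fibrewise definability to uniform definability in a totally transcendental setting: t.t.t supplies a uniform finite bound $N$ on the number of components of $M\setminus\{x\}$ as $x$ varies, and for each complete type concentrating on a point $x$ one obtains a formula together with a small definable neighborhood of $x$ on which that formula defines the components; by compactness finitely many such formulas cover $M$, and $\omega$-saturation lets one assemble them, case-splitting on which local formula applies, into a single $\theta$. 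The standing hypothesis that $M\setminus\{x\}$ always has \emph{more than one} component is what keeps this scheme non-degenerate: every fibre genuinely splits, so the separating clopen sets we are exploiting really are present, uniformly, for all $x$.

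Granting such a $\theta$, the proposition follows immediately, as the relation is then defined by the first-order formula
\[
  a\sim_{x}b \;\equiv\; a\neq x \,\wedge\, b\neq x \,\wedge\, \forall\bar w\,\big(\theta(a;x,\bar w)\leftrightarrow\theta(b;x,\bar w)\big),
\]
since by construction, ranging $\bar w$ over all parameters makes $\theta(\cdot;x,\bar w)$ run through (at least) every single component, so the universally quantified biconditional holds precisely when $a$ and $b$ are never separated by a component, i.e.\ when they lie in the same one.

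As a check on the logical shape, the same input realizes the relation both as a set that is a union of definable relations (there \emph{exists} a definably connected definable subset of $M\setminus\{x\}$ containing $a$ and $b$) and as the complement of such a union (there \emph{exists} a relatively clopen definable set separating $a$ from $b$). Hence even without packaging everything into a single $\theta$, one could conclude definability directly: both the relation and its complement are countable unions of definable relations, so by $\omega$-saturation and compactness the relation is definable. Either route leaves the uniform definability of definable connectedness in the family $\{M\setminus\{x\}\}_{x\in M}$ as the one genuinely substantial step.
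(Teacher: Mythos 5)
You have the right skeleton -- the paper, too, proceeds by showing that $\sim_x$ and its complement are both $\bigvee$-definable and then invoking saturation/compactness -- and you correctly locate the crux in the uniform definability of the components of $M\backslash\{x\}$ as $x$ varies. But the step you defer to ``the usual strategy'' is precisely the substantive content of the paper's proof, and the specific claim you make there is false as stated. The obstruction is that while ``these formulas define a clopen partition of $M\backslash\{x\}$ into $N$ pieces'' is first order, ``the pieces are connected'' is not: at points $y$ near $x$ (more precisely, in the $\emptyset$-definable set $D_p$ where the partition formulas chosen for $p=tp(x/\emptyset)$ still yield a canonical clopen $N$-partition), that partition may be strictly coarser than the partition into components, i.e.\ one only gets $a\sim_y b\Rightarrow R_p(y,a,b)$. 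Consequently the set of $y$ ``on which the formula defines the components'' is not a priori definable, and your compactness-and-assembly argument has nothing to assemble. Worse, your claim that each type yields a \emph{definable neighborhood} on which the components are defined fails outright when $d_M(M\backslash\{x\})>2$: the paper proves (Proposition \ref{prop:bigger-2-acl}, via Lemma \ref{lem:not-all-connected-k}) that in this case the good set $D_p$ is \emph{finite}, so $x\in acl(\emptyset)$ and no such neighborhood exists. The unavoidable case split -- for $>2$ components, cut down to a finite $\emptyset$-definable set; for exactly $2$ components, pass to $int(\tilde D_p)$ where every point has exactly two components, so that a coarsening equivalence relation with the same (two) number of classes must equal $\sim_y$ (Proposition \ref{prop:equals-2}) -- is exactly what your sketch is missing, and it rests on the nontrivial Lemmas \ref{lem:disjoint-components}--\ref{lem:not-all-connected-k}, which show that an infinite definable set of points all carrying a definable coarsening of $\sim$ with at least three classes would force infinitely many connected components, contradicting t.t.t.

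Your fallback route (``both the relation and its complement are countable unions of definable relations, so saturation gives definability'') has a separate, fatal gap: the unions you exhibit are indexed by arbitrary parameter tuples $(c,\bar w)$ subject to a non-definable side condition (definable connectedness of $\psi(M;c,\bar w)$). A union of definable sets \emph{with unrestricted parameters} carries no information -- in any structure where singletons are definable, every set and its complement are such unions -- so $\omega$-saturation yields nothing. This is why the paper is careful to obtain $\bigvee$-definability of both $\sim_x$ and $\nsim_x$ by formulas \emph{without parameters} (the whole point of constructing the parameter-free sets $C_x$ from $D_p$, $\tilde D_p$ and $R_p$); only then is the relevant partial type over $\emptyset$, where $\omega$-saturation applies. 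So while your framing of the problem matches the paper, the proof as proposed does not go through: the key uniformity step is asserted rather than proved, is false in the many-components case, and the proposed shortcut around it is vacuous.
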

In section \ref{sec:Connected-Components-in} we'll prove that that
the number of connected components in a definable family is uniformly
bounded.
\begin{prop*}
  Let $(M,\phi)$ be a $1$-dimensional $\omega$-saturated t.t.t structure
  and \[
  \alpha(x,y_{1},\dots,y_{l})\in L\]
  . Then there exists a constant $C\in\mathbb{N}$ such that for every
  $l$-tuple $c_{1},\dots,c_{l}\in M$, $\alpha(c_{1},\dots,c_{l})$
  has less than $C$ connected components.
\end{prop*}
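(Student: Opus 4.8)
The plan is to argue by compactness, using $\omega$-saturation to turn a failure of the uniform bound into a single definable subset of $M$ with infinitely many definably connected components, contradicting the t.t.t hypothesis. Write $\bar{y}=(y_{1},\dots,y_{l})$, and for $\bar{c}\in M^{l}$ let $\alpha(\bar{c})=\{x\in M:M\models\alpha(x,\bar{c})\}$. Let $\{\phi(x,\bar{a}):\bar{a}\}$ be the uniformly definable basis for the topology of $M$. Suppose toward a contradiction that no constant $C$ works, so that for every $n\in\mathbb{N}$ there is a tuple $\bar{c}$ for which $\alpha(\bar{c})$ has at least $n$ definably connected components.

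The heart of the matter is to express ``$\alpha(\bar{y})$ has at least $n$ definably connected components'' by a single first order formula $\psi_{n}(\bar{y})$. One direction is immediate: if there are parameters $\bar{a}_{1},\dots,\bar{a}_{n}$ such that the sets $\phi(M,\bar{a}_{i})\cap\alpha(\bar{y})$ are nonempty, pairwise disjoint, and together cover $\alpha(\bar{y})$, then a pairwise disjoint relatively open cover forces each piece to be relatively clopen, so $\alpha(\bar{y})$ has at least $n$ components; and this configuration is visibly first order in $\bar{y}$. I therefore take
\[
\psi_{n}(\bar{y})\;\equiv\;\exists\bar{a}_{1}\cdots\exists\bar{a}_{n}\Big(\bigwedge_{i=1}^{n}\exists x\,(\alpha(x,\bar{y})\wedge\phi(x,\bar{a}_{i}))\;\wedge\;\forall x\,\big(\alpha(x,\bar{y})\to\bigvee_{i=1}^{n}\phi(x,\bar{a}_{i})\big)\;\wedge\;\bigwedge_{i<j}\neg\exists x\,(\alpha(x,\bar{y})\wedge\phi(x,\bar{a}_{i})\wedge\phi(x,\bar{a}_{j}))\Big).
\]

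I expect the main obstacle to be the converse: I must show that whenever $\alpha(\bar{c})$ has at least $n$ definably connected components, such a separating configuration can genuinely be found inside a fixed definable family, so that $M\models\psi_{n}(\bar{c})$. This is where one dimensionality and t.t.t enter. Since $\dim M=1$, the dimension theory for t.t.t structures shows that any definable subset with empty interior is finite, so the boundary of $\alpha(\bar{c})$ is finite; consequently each definably connected component is definable and relatively clopen, and distinct components are separated by only finitely many boundary points. Grouping the finitely many components into $n$ nonempty relatively clopen blocks, each block is the trace on $\alpha(\bar{c})$ of a definable open set of $M$, and the finiteness of the separating boundary points lets these traces be realized by members of a single uniformly definable family (after replacing $\phi$ by its finite Boolean combinations, which only alters the constant). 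This is the delicate step where I expect most of the work to concentrate; a parallel route would be to prove directly that the relation ``$a$ and $b$ lie in the same component of $\alpha(\bar{y})$'' is definable in the family, by an argument analogous to the definability of $\sim_{x}$, after which $\psi_{n}$ can be rewritten using that relation.

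Granting the expressibility of $\psi_{n}$, the conclusion follows at once. By assumption each $\psi_{n}(\bar{y})$ is satisfiable in $M$, and since $\psi_{n+1}(\bar{y})\vdash\psi_{n}(\bar{y})$ the set $\{\psi_{n}(\bar{y}):n\in\mathbb{N}\}$ is finitely satisfiable and hence a consistent type over the finite parameter set of $\alpha$. By $\omega$-saturation it is realized by some $\bar{c}^{\ast}\in M^{l}$, and then $\alpha(\bar{c}^{\ast})$ has at least $n$ definably connected components for every $n$, i.e.\ infinitely many. This contradicts the t.t.t hypothesis, so the uniform bound $C$ exists.
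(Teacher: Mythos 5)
Your overall skeleton (turn failure of the bound into a type, realize it by $\omega$-saturation, contradict t.t.t on a single definable set) is sound, and indeed saturation enters the paper's proof in exactly this soft way. The gap is the step you yourself flag as delicate, and it is not just unproved but false as formulated: the converse direction of $\psi_{n}$ fails. Take the paper's own structure $R_{int}=\langle\mathbb{R},I(x,y,z)\rangle$ of example \ref{exa:interval-example}, whose basis sets are exactly the \emph{bounded} open intervals. The definable set $X=\{x\vert\neg I(c_{1},c_{2},x)\wedge x\neq c_{1}\wedge x\neq c_{2}\}$ (the complement of a closed bounded interval) has $d_{M}(X)=2$, but both components are unbounded, while any finite collection of basis sets has bounded union; hence no parameters $\bar{a}_{1},\dots,\bar{a}_{n}$ witness $\psi_{2}$, and the set $\{\psi_{n}\}$ need not even be finitely satisfiable when the uniform bound fails. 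Your proposed repair does not close this: if you pass to arbitrary finite Boolean combinations of basis sets, the traces are no longer relatively open and your direction (b) collapses (any definable set splits any $X$ into two disjoint nonempty pieces, witnessing nothing about components); if you keep the sets open (finite unions/intersections of basis sets), they remain bounded in $R_{int}$ and direction (a) still fails. What your argument really needs is a single formula, uniform in $\bar{y}$ with a bounded number of parameters, whose instances always realize the clopen blocks --- i.e.\ uniform definability of connected components in the family. That is precisely your ``parallel route'', and it is circular here: in the paper, definability of the same-component relation is a substantial later result (proposition \ref{prop:definable-relation}), proved only under the extra hypothesis that every point disconnects $M_{t}$, and its construction relies on the very proposition you are proving (see the remark following the definition of $\sim_{x}$, which invokes proposition \ref{prop:bounded-connected} to fix the number $N$ of classes).

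The missing idea is the paper's reduction of component-counting to boundary-counting, which sidesteps expressibility of ``$\geq n$ components'' altogether. The point is that $bd(\alpha^{M_{t}}(\bar{y}))$ \emph{is} uniformly definable ($x$ is a boundary point iff every basis set containing $x$ meets both $\alpha(\bar{y})$ and its complement), and each such boundary is finite by t.t.t; so $\omega$-saturation yields a uniform bound $n$ on $\vert bd(\alpha^{M_{t}}(\bar{c}))\vert$, and likewise a uniform bound $K$ on $\vert bd(\phi^{M_{t}}(b))\vert$ (lemma \ref{lem:bounded-boundary}). The combinatorial heart is lemma \ref{lem:bounded-connected}: in a definably connected structure, $d_{M}(X)\leq\vert bd(X)\vert\cdot K$. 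One fixes, by Hausdorffness, a small basis set $U_{i}$ around each boundary point $a_{i}$ of $X$, chosen so that no non-singleton clopen piece is swallowed by $U_{i}$; every non-singleton clopen piece of $X$ has some $a_{i}$ in its closure and must therefore meet $bd(U_{i})$, and distinct pieces meet these boundaries in distinct points, so the pieces inject into $\bigcup_{i}bd(U_{i})$, a set of size at most $\vert bd(X)\vert\cdot K$. Decomposing $M_{t}$ into its $d_{M}(M_{t})$ components and applying the lemma in each gives the explicit constant $C=d_{M}(M_{t})\cdot K\cdot n$. If you supply an argument of this kind (some bound on the number of components in terms of uniformly definable data), your compactness framing then finishes the proof; without it, the proposal does not go through.
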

This in turn will allow us to prove that elementary extensions of
such structures are t.t.t as well.

\section{Preliminaries }

In this section we review some of the notions, definitions and results
from Pillay \cite{P1} which will be used heavily throughout the following
sections.
\begin{defn}
  Let $M$ be a two sorted $L$ structure with sorts $M_{t}$ and $M_{b}$
  and let $\phi(x,y_{1},\dots,y_{k})$ be an $L$ formula such that
  $\{\phi^{M_{t}}(x,\bar{a})\vert\bar{a}\in M_{b}^{k}\}$ is a basis
  for a topology on $M_{t}$. Then the pair $(M,\phi)$ will be called
  a \emph{first order topological structure}. When we talk about the
  topology of $M_{t}$ we mean the one generated by the basis described
  above.\end{defn}
\begin{rem*}
  In Pillay's paper, first order topological structures were defined
  on a one-sorted structure where each element can be both a parameter
  for a basis set, and a point in the topological space. However, in
  practice this double meaning isn't needed, so we're using the two
  sorted definition both for clarity and in order to slightly strengthen
  some of the theorems.
\end{rem*}
In addition, we consider the following condition on a \emph{ }first
order topological structure $M$:

(A) Every definable set $X\subset M_{t}$ is a boolean combination
of definable open subsets.

In this paper we assume that $M_{t}$ is Hausdorff and $(M,\phi)$
is a first order topological structure satisfying (A).

The following topological result is also helpful in this context and
was proved by Robinson \cite[4.2]{R}.
\begin{lem}
  Let $V$ be a topological space, and $W\subset V$ a non-empty subset.
  Let $A\subset V$ be a boolean combination of open subsets of $V$
  and let $B=V\backslash A$. Then either $W\cap A$ or $W\cap B$ has
  an interior with respect to the induced topology on $W$.\label{lem:set-or-comp-int}\end{lem}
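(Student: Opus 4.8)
The plan is to reduce immediately to the case $W=V$ and then induct on the number of open sets needed to build $A$.

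First I would observe that the statement only concerns the subspace topology on $W$: the sets $U\cap W$, for $U$ open in $V$, are precisely the open subsets of $W$, and restriction to $W$ commutes with finite unions, intersections, and complements taken within $W$ (for instance $(V\setminus X)\cap W=W\setminus(X\cap W)$). Hence $A\cap W$ is a boolean combination of open subsets of $W$, its complement inside $W$ is $B\cap W$, and an interior with respect to $W$ is literally an interior in the topological space $W$. So it suffices to prove the following self-contained statement: if $V$ is a nonempty topological space and $V=A\sqcup B$ with $A$ a boolean combination of open sets, then $A$ or $B$ has nonempty interior. I would fix an expression of $A$ as a boolean combination of finitely many open sets $U_{1},\dots,U_{m}$ (so that $B=V\setminus A$ is one too) and induct on $m$.

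For the base case $m=0$, a boolean combination of the empty family is either $\emptyset$ or $V$; since $A$ and $B$ partition the nonempty space $V$, one of them equals $V$, which is open and nonempty and is therefore its own interior. For the inductive step I set $U=U_{m}$. If $U=\emptyset$, then $U_{m}$ may be deleted from every boolean expression, so $A$ and $B$ are already boolean combinations of $U_{1},\dots,U_{m-1}$, and the inductive hypothesis applies to $V$ directly. If $U\neq\emptyset$, I pass to the nonempty open subspace $U$: the traces $A\cap U$ and $B\cap U$ partition $U$ and are boolean combinations of $U_{1}\cap U,\dots,U_{m}\cap U$. But $U_{m}\cap U=U$ is the top element of the boolean algebra of subsets of $U$ and contributes nothing, so $A\cap U$ is a boolean combination of the $m-1$ open subsets $U_{1}\cap U,\dots,U_{m-1}\cap U$ of $U$. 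By the inductive hypothesis applied to $U$, one of $A\cap U$ or $B\cap U$ has nonempty interior relative to $U$.

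The final point to nail down, and the only place needing any care, is transferring this conclusion back to $V$: because $U$ is open in $V$, a subset $S\subseteq U$ has the same interior whether computed in $U$ or in $V$, since the open subsets of $U$ are exactly the open subsets of $V$ contained in $U$. Hence a nonempty $U$-interior of $A\cap U$ (resp.\ $B\cap U$) is a nonempty $V$-interior contained in $A$ (resp.\ $B$), which completes the induction. I do not expect a serious obstacle; the main thing to get right is the bookkeeping showing that choosing a \emph{single} generator $U_{m}$ and restricting to it genuinely lowers the generator count, via the collapse of $U_{m}\cap U$ to the top element, together with the degenerate case $U=\emptyset$. This is precisely why the induction should be run on the number of generating open sets rather than on any geometric complexity measure, and it sidesteps any appeal to a Baire-type hypothesis on $V$.
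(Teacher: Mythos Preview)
Your argument is correct. The paper does not supply its own proof of this lemma but attributes it to Robinson; your reduction to the case $W=V$ followed by induction on the number $m$ of open generators is a clean, self-contained route. The key step---restricting to a nonempty generator $U=U_m$, whereupon $U_m\cap U$ collapses to the top element of the subspace boolean algebra and the generator count drops---is handled correctly, as are the degenerate case $U_m=\emptyset$ and the transfer of interiors from the open subspace $U$ back to $V$ (valid precisely because $U$ is open in $V$). If you like, the induction unwinds to the following one-line version: among the nonempty finite intersections $\bigcap_{i\in I}U_i$ choose one with $|I|$ maximal; by maximality it is disjoint from every remaining $U_j$, hence lies inside a single atom of the boolean algebra generated by $U_1,\dots,U_m$, and therefore entirely inside $A$ or inside $B$, while being open.
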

\begin{defn}
  Let $M$ be a first order topological structure satisfying (A) and let
  $X\subset M_{t}$ be a closed definable subset of $M_{t}$. The ordinal
  valued $D_{M}(X)$ is defined by:
  \begin{enumerate}
  \item If $X\neq\emptyset$ then $D_{M}(X)\geq0$.
  \item If $\delta$ is a limit ordinal and $D_{M}(X)\geq\alpha$ for all
    $\alpha<\delta$ then $D_{M}(X)\geq\delta$.
  \item If there's a closed definable $Y\subset M_{t}$ such that $Y\subset X$,
    $Y$ has no interior in $X$ and $D_{M}(Y)\geq\alpha$ then $D_{M}(X)\geq\alpha+1$.
  \end{enumerate}
\end{defn}
\begin{rem*}
  We'll write $D_{M}(X)=\alpha$ if $D_{M}(X)\geq\alpha$ and $D_{M}(X)\ngeq\alpha+1$.
  We'll write $D_{M}(X)=\infty$ if $D_{M}(X)\geq\alpha$ for all $\alpha$.\end{rem*}
\begin{defn}
  We say that $M$ \emph{has dimension }if $D_{M}(X)\neq\infty$ for
  all closed definable subsets $X\subset M_{t}$.
\end{defn}
In addition, we define the number of definable connected components
for definable subsets of our topology:
\begin{defn}
  Let $X\subset M_{t}$ be definable. Then $d_{M}(X)$ is the maximum number
  $d<\omega$ such that there are disjoint definable clopen sets $X_{1},\dots,X_{d}\subset X$
  with $X=\cup_{i=1}^{d}X_{i}$ , and $\infty$ if no such $d$ exists.\end{defn}
\begin{rem*}
  Throughout the paper, when we say {}``connected'' we always mean
  {}``definably connected''.
\end{rem*}
And now for the main definition:
\begin{defn}
  We say that $M$ is \emph{topologically totally transcendental (t.t.t)}
  if $M$ is a first order topological structure satisfying (A) with
  dimension such that for every definable set $X\subset M_{t}$, $d_{M}(X)<\infty$.
  We say that a theory $T$ is t.t.t is every model of $T$ is t.t.t.
\end{defn}
The following lemma was proved by Pillay \cite[6.6]{P1} and plays
a key role in most of the proofs in this paper.
\begin{lem}
  Let $M$ be a 1-dimensional t.t.t structure. Then:\label{lem:Let--be}
  \begin{enumerate}
  \item For any closed and definable $X\subset M_{t}$, $D(X)=0$ iff $X$
    is finite.
  \item The set of isolated points of $M_{t}$ is finite.
  \item For any definable $X\subset M_{t}$ there are pairwise disjoint definably
    connected definable open subsets $X_{1},\dots,X_{m}\subset M_{t}$
    and a finite set $Y\subset M_{t}$ such that $X=(\cup_{i=1}^{m}X_{i})\cup Y$.
  \item For any definable $X\subset M_{t}$, the set of boundary points of
    $X$ is finite.
  \end{enumerate}
\end{lem}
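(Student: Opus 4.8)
The plan is to prove the four parts in the order (1), (2), (4), (3), deriving (3) from (4) at the end, and to isolate at the outset a purely definitional reformulation of rank zero. The key observation, immediate from the definition of $D_{M}$ together with Hausdorffness, is that for a nonempty closed definable $X$ one has $D_{M}(X)=0$ \emph{iff} $X$ is discrete. Indeed $D_{M}(X)\not\geq 1$ says precisely that no nonempty closed definable $Y\subseteq X$ has empty interior in $X$; applying this to the singletons $Y=\{a\}$ (closed by Hausdorffness, definable over $a$) says exactly that every point of $X$ is isolated in $X$.

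For part (1), one direction is clear, since a finite set is discrete and hence of rank $0$. For the converse I would bring in the t.t.t hypothesis through $d_{M}$: if $X$ is closed, definable and discrete but infinite, then each singleton $\{a\}\subseteq X$ is clopen in $X$ (open by discreteness, closed by Hausdorffness) and definable, so $X$ can be partitioned into arbitrarily many disjoint nonempty definable clopen pieces---finitely many singletons together with a cofinite remainder---forcing $d_{M}(X)=\infty$ and contradicting t.t.t. Hence a rank-$0$ closed definable set is finite. Part (2) is the same argument applied to the set $I$ of isolated points of $M_{t}$: $I$ is definable (an isolated point is one admitting a basis neighbourhood equal to a singleton) and is itself discrete, so were it infinite the singleton partition would again give $d_{M}(I)=\infty$. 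Note that (1) and (2) use only Hausdorffness and finiteness of $d_{M}$, not the dimension bound.

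Part (4) is where condition (A) and one-dimensionality enter. I would first show, purely topologically, that any boolean combination of open sets has nowhere dense boundary: the boundary of an open set is closed with empty interior, the boundary operator is invariant under complementation, $bd(A\cup B)\subseteq bd(A)\cup bd(B)$, and a finite union of closed sets with empty interior again has empty interior; thus the class of sets with nowhere dense boundary contains the opens and is closed under the boolean operations. By (A) every definable $X$ lies in this class, so $bd(X)$ is a closed definable set with empty interior in $M_{t}$. Since $M$ is one-dimensional, $D_{M}(M_{t})\leq 1$, i.e. $M_{t}$ has no closed definable subset with empty interior and rank $\geq 1$; hence $D_{M}(bd(X))=0$, and part (1) makes $bd(X)$ finite.

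Finally, for part (3) I would take the interior $X^{\circ}=\mathrm{int}(X)$, which is open and definable, and split it into its definably connected components $X_{1},\dots,X_{m}$; these are finite in number and each clopen in $X^{\circ}$, hence open in $M_{t}$, because $d_{M}(X^{\circ})<\infty$ and a maximal definable clopen partition necessarily consists of definably connected pieces. Setting $Y=X\setminus X^{\circ}$, every point of $Y$ lies in $\overline{X}\cap\overline{M_{t}\setminus X}=bd(X)$, which is finite by part (4); so $Y$ is finite and $X=(\bigcup_{i=1}^{m}X_{i})\cup Y$ is the desired decomposition. I expect the main obstacle to be part (4): the crux is recognising that assumption (A) is exactly what rules out pathological dense-boundary behaviour, after which the dimension hypothesis and part (1) combine to give finiteness. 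The reduction of rank $0$ to discreteness, and thence to $d_{M}$, is the conceptual pivot that makes (1) and (2) fall out uniformly.
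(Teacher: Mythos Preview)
The paper does not supply its own proof of this lemma; it simply attributes the result to Pillay \cite[6.6]{P1} and quotes the statement. Your proposal therefore cannot be compared against a proof in the present paper, only assessed on its own merits.

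Your argument is correct and self-contained. The reformulation of $D_{M}(X)=0$ as ``$X$ is discrete'' via singleton subsets is exactly right (Hausdorffness makes $\{a\}$ closed, and it is definable over $a$), and the passage from ``discrete infinite definable'' to $d_{M}=\infty$ by peeling off singletons is the clean way to exploit the t.t.t hypothesis. Your treatment of part (4) is also sound: the closure of the class $\{A:\mathrm{int}(bd(A))=\emptyset\}$ under boolean operations follows from $bd(A)=bd(A^{c})$, $bd(A\cup B)\subseteq bd(A)\cup bd(B)$, and the fact that a finite union of closed sets with empty interior has empty interior; together with (A) this gives $bd(X)$ closed definable with empty interior, whence $D_{M}(bd(X))=0$ by one-dimensionality and finiteness by part (1). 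The derivation of (3) from (4) via $X\setminus\mathrm{int}(X)\subseteq bd(X)$ and a maximal definable clopen partition of $\mathrm{int}(X)$ is straightforward. The only point you might make more explicit is that $bd(X)$ is itself definable (since closure and interior are first-order expressible using the basis formula $\phi$), but this is routine.
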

\begin{rem*}
  One consequence of part $3$ of lemma \ref{lem:Let--be} which will
  be used many times below is that if a set $A\subset M_{t}$ is definable
  then the statement \emph{{}``$A$ is infinite'' }is expressible
  in first order logic as it's equivalent to the statement \emph{{}``$A$
    has no interior''}.
\end{rem*}

\section{Connected Components in Definable Families\label{sec:Connected-Components-in}}

In this section we'll show that the number of connected components
is uniformly bounded over a definable family. This is used to prove
that in 1-dimensional $\omega$-saturated structures, the property
of being t.t.t is preserved under elementary equivalence.
\begin{lem}
  Let $(M,\phi)$ be a $1$-dimensional $\omega$-saturated t.t.t structure.
  Then there exists a number $K\in\mathbb{N}$ such that for each point $b\in M_{b}$,
  $\vert bd(\phi^{M_{t}}(b)\vert\leq K$.\label{lem:bounded-boundary}\end{lem}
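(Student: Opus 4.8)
The plan is to show that membership in the boundary of a basis set is uniformly first-order definable in the parameter, and then to extract the uniform bound by an $\omega$-saturation argument played against part 4 of Lemma~\ref{lem:Let--be}.

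First I would observe that, because the topology is generated by the single definable family $\{\phi^{M_{t}}(\bar a)\mid \bar a\in M_{b}^{k}\}$, the closure operator is expressible in $L$: a point $z$ lies in the closure of $\phi^{M_{t}}(y)$ exactly when every basis set containing $z$ meets $\phi^{M_{t}}(y)$, which is the formula
\[
\psi(z,y):=\forall\bar a\,\bigl(z\in\phi^{M_{t}}(\bar a)\rightarrow\exists w\,(w\in\phi^{M_{t}}(\bar a)\wedge w\in\phi^{M_{t}}(y))\bigr).
\]
Running the same construction with the complement $M_{t}\setminus\phi^{M_{t}}(y)$ in place of $\phi^{M_{t}}(y)$ and taking the conjunction yields an $L$-formula $\beta(z,y)$ that holds precisely when $z\in bd(\phi^{M_{t}}(y))$. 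For each $n\in\mathbb{N}$ I then form
\[
\theta_{n}(y):=\exists z_{1}\cdots\exists z_{n}\Bigl(\bigwedge_{i}\beta(z_{i},y)\wedge\bigwedge_{i<j}z_{i}\neq z_{j}\Bigr),
\]
so that $M\models\theta_{n}(b)$ iff $\phi^{M_{t}}(b)$ has at least $n$ distinct boundary points.

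With these formulas in hand the conclusion follows by saturation. Suppose no uniform bound $K$ exists; then for every $n$ there is a parameter witnessing $\theta_{n}$, so every $\theta_{n}$ is satisfiable in $M$. Since $\theta_{n+1}\vdash\theta_{n}$, the set $p(y)=\{\theta_{n}(y)\mid n\in\mathbb{N}\}$ is finitely satisfiable and hence a type over $\emptyset$; by $\omega$-saturation it is realized by some $b^{*}\in M_{b}$. Then $bd(\phi^{M_{t}}(b^{*}))$ has at least $n$ points for every $n$ and is therefore infinite, contradicting part 4 of Lemma~\ref{lem:Let--be}, which asserts that the boundary of any definable subset of $M_{t}$ is finite. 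Thus a uniform $K$ must exist. The only delicate point is the first step: the entire uniformity is extracted from compactness, so everything rests on $\beta(z,y)$ being genuinely first-order and uniform in the parameter $y$, which is exactly what the definable basis provides; once $\beta$ is available the remainder is routine.
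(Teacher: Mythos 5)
Your proof is correct and takes essentially the same approach as the paper: the paper's own two-line proof (each $bd(\phi^{M_{t}}(b))$ is finite by part 4 of Lemma \ref{lem:Let--be}, so $\omega$-saturation gives a uniform bound) is simply a compressed version of your argument, leaving implicit exactly the two steps you spell out — the uniform first-order definability of $z\in bd(\phi^{M_{t}}(y))$ via the definable basis, and the realization of the finitely satisfiable type $\{\theta_{n}(y)\}_{n}$ to derive a contradiction.
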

\begin{proof}
  For each $b\in M_{b}$, $\vert bd(\phi^{M_{t}}(b)\vert$ is finite.
  The lemma then follows from the fact that $M$ is $\omega$-saturated.\end{proof}
\begin{lem}
  Let $(M,\phi)$ be a definably connected $1$-dimensional $\omega$-saturated
  t.t.t structure, $K\in\mathbb{N}$ a number such that for each point $b\in M_{b}$
  we have $\vert bd(\phi^{M_{t}}(b)\vert\leq K$, and
  $X\subset M_{t}$ a definable subset such that $bd(X)=n$. Then $d_{M}(X)\leq n\cdot K$.\label{lem:bounded-connected}\end{lem}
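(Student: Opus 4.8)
The plan is to bound $d_M(X)$ by assigning to each connected component of $X$ a boundary point of $X$ and then showing that no single point of $bd(X)$ can be the boundary point of more than $K$ components. Since $X$ has $n$ boundary points, this immediately gives $d_M(X)\leq nK$. I would first record an elementary fact: if $A$ is clopen in $X$ then $bd(A)\subseteq bd(X)$ (a boundary point of $A$ lying in $int(X)$ would, since $A$ is relatively open in $X$, be interior to $A$, and one not in $X$ lies in $bd(X)$). Applying this to the finitely many definable components $X_{1},\dots,X_{d}$ of $X$ shows $bd(X_{j})\subseteq bd(X)$ for each $j$. Because $M$ is definably connected, no $X_{j}$ can be clopen in $M$ unless it equals $M$; so, excluding the trivial case $X=M$, every component satisfies $bd(X_{j})\neq\emptyset$. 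Hence $d=\sum_{j}1\leq\sum_{j}\vert bd(X_{j})\vert=\sum_{p\in bd(X)}\#\{j:p\in bd(X_{j})\}$, and it suffices to prove that for each fixed $p\in bd(X)$ at most $K$ components have $p$ in their boundary.

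This local estimate is the heart of the argument. Fixing $p$, I would choose a basis set containing $p$ and, using Lemma \ref{lem:Let--be}(3), pass to the definably connected open component $U$ of that basis set containing $p$; by the clopen fact above, $\vert bd(U)\vert\leq\vert bd(\phi^{M_{t}}(b))\vert\leq K$. Since $bd(X)$ is finite and there are only finitely many components, and since every infinite definably connected set has nonempty interior (Lemma \ref{lem:Let--be}(3)) and therefore cannot sit inside arbitrarily small neighborhoods of $p$, I can shrink $U$ so that $bd(X)\cap U=\{p\}$ and so that $U$ contains no component of $X$ entirely. One must first dispose of the possibility that $\{p\}$ is itself an isolated-point component: in that case $X\cap U=\{p\}$ for small $U$, so no other component meets $U$, and the count is $1\leq K$.

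In the remaining case every component $X_{j}$ with $p\in bd(X_{j})$ is infinite, hence accumulates at $p$ and meets $U$, but is not contained in $U$. The key observation is that a definably connected set meeting both the open set $U$ and its complement must contain a point of $bd(U)$: otherwise $X_{j}$ would be separated by the two disjoint open sets $U$ and $M\setminus\overline{U}$. Thus each such $X_{j}$ contains an actual point of $bd(U)$, and because distinct components are pairwise disjoint these points are distinct. Therefore the number of components with $p$ in their boundary is at most $\vert bd(U)\vert\leq K$, which is exactly the per-point bound needed.

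I expect the main obstacle to be this local bound, and within it the delicate point is engineering the neighborhood $U$: one needs it definably connected with at most $K$ boundary points, yet small enough that $p$ is its only point of $bd(X)$ and that no whole component of $X$ hides inside it. Once $U$ is arranged, the frontier argument — a definably connected set meeting $U$ and its complement must meet $bd(U)$ — combined with disjointness of the components is what converts the purely topological bound $\vert bd(U)\vert\leq K$ into the counting statement, and summing over the $n$ points of $bd(X)$ finishes the proof.
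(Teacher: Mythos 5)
Your proposal is correct and is essentially the paper's own proof: the same maximal clopen partition into definably connected components, the same observation that each component's boundary is a nonempty subset of $bd(X)$ (by connectedness of $M_{t}$), the same choice of basis-set neighborhoods of the points of $bd(X)$ (each with at most $K$ boundary points) shrunk so that no component lies wholly inside them, the same frontier argument forcing each non-singleton component to meet the neighborhood's boundary, and the same disjointness-based count giving $n\cdot K$. The only differences are bookkeeping --- you sum per-point counts where the paper exhibits a single injection into $\bigcup_{i}bd(U_{i})$ --- plus two harmless quirks on your side: passing to a connected component of the basis set is unnecessary, and the shrinking step needs only Hausdorffness (each infinite component has a point other than $p$ to separate from), not the interior argument via Lemma \ref{lem:Let--be}.
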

\begin{proof}
  Let $N=d_{M}(X)$ and let $\{Y_{1},\dots,Y_{N}\}$ be pairwise disjoint
  clopen (in $X$) subsets of $X$ such that $X=\cup_{i=1}^{N}Y_{i}$.
  In addition, we denote the elements of $bd(X)$ by $bd(X)=\{a_{1},\dots,a_{n}\}$. 

  By the Hausdorffness of $M_{t}$, we can find basis sets $\{U_{1},\dots,U_{n}\}$
  such that for all $1\leq i\leq n$:
  \begin{enumerate}
  \item $a_{i}\in U_{i}$
  \item For all $1\leq j\leq N$, if $Y_{j}\neq\{a_{i}\}$ then $Y_{j}\backslash U_{i}\neq\emptyset$.\end{enumerate}
  \begin{claim*}
    For all $1\leq j\leq N$, if $Y_{j}$ isn't a point then there exists
    an $1\leq i\leq n$ such that $a_{i}\in\bar{Y_{i}}$ and $bd(U_{i})\cap Y_{j}\neq\emptyset$.\end{claim*}
  \begin{proof}
    Let $1\leq j\leq N$ be chosen such that $Y_{j}$ isn't a point. Without
    loss of generality, $Y_{j}\neq X$ because otherwise $X$ would be
    connected and the lemma would be trivial. Since $M_{t}$ is definably
    connected, $bd(Y_{j})\neq\emptyset$. In addition, $Y_{j}$ is clopen
    in $X$ so $bd(Y_{j})\subset bd(X)$. Therefore, there exists some
    $1\leq i\leq n$ such that $a_{i}\in\bar{Y_{i}}$.

    We'll now see that $bd(U_{i})\cap Y_{j}\neq\emptyset$. 

    Assume for contradiction that $bd(U_{i})\cap Y_{j}=\emptyset$. Then
    both $U_{i}\cap Y_{j}$ and $U_{i}^{c}\cap Y_{j}$ are non-empty clopen
    subsets of $X$, which is a contradiction to the fact that $Y_{j}$
    is a connected component. 

    This completes the claim.
  \end{proof}
  Without loss of generality, let's choose an integer $L$ between $1$
  and $N$ such that $\{Y_{1},\dots,Y_{L}\}$ are points and $\{Y_{L+1},\dots,Y_{N}\}$
  are not points. Furthermore, let's choose an integer $M$ between
  $1$ and $n$ such that $\{a_{1},\dots,a_{M}\}$ are isolated and
  $\{a_{M+1},\dots,a_{n}\}$ are not. It's clear that $L\leq M$.

  According to the claim, for each index $L+1\leq j\leq N$ there exists
  an integer $1\leq i\leq n$ and a point $y_{j}$ such that $y_{j}\in bd(U_{i})$
  and $a_{i}\in\bar{Y_{j}}$. We note that from the fact that $a_{i}\in\bar{Y_{j}}$,
  it follows that $a_{i}$ is not an isolated point. This gives us a
  mapping: \[
  \phi:\{Y_{L+1},\dots,Y_{N}\}\rightarrow\bigcup_{i=M+1}^{n}bd(U_{i})\]
  Since $y_{k}\neq y_{l}$ for each $L+1\leq k<l\leq N$ , the map $\phi$
  is injective. Furthermore, \[
  \vert\bigcup_{i=M+1}^{n}bd(U_{i})\vert\leq(n-M)\cdot K\leq(n-L)\cdot K\]
  so by the injectivity of $\phi$ we get that $N-L\leq(n-L)\cdot K$.
  But $K\geq1$ so $N\leq n\cdot K$.\end{proof}
\begin{prop}
  Let $(M,\phi)$ be a $1$-dimensional $\omega$-saturated t.t.t structure
  and let $\alpha(x,y_{1},\dots,y_{l})\in L$ be a formula. Then there exists a constant
  $C\in\mathbb{N}$ such that for every $l$-tuple $c_{1},\dots,c_{l}\in M$,
  $d_{M}(\alpha^{M_{t}}(c_{1},\dots,c_{l}))<C$. \label{prop:bounded-connected}\end{prop}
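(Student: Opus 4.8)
The plan is to reduce the uniform bound on connected components to the already-established uniform bound on the number of boundary points. The key earlier results are Lemma \ref{lem:bounded-boundary}, which gives a constant $K$ bounding $|bd(\phi^{M_t}(b))|$ uniformly over all parameters $b$, and Lemma \ref{lem:bounded-connected}, which says $d_M(X)\leq n\cdot K$ whenever $bd(X)=n$. Combining these, it suffices to produce a uniform bound on the number of boundary points of the fibers $\alpha^{M_t}(\bar c)$ as $\bar c$ ranges over all $l$-tuples. So the whole problem collapses to showing that $|bd(\alpha^{M_t}(\bar c))|$ is bounded independently of $\bar c$.

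First I would invoke part (4) of Lemma \ref{lem:Let--be} to note that for each fixed tuple $\bar c$, the fiber $\alpha^{M_t}(\bar c)$ is a definable subset of $M_t$ and therefore has finitely many boundary points. The natural move is then to mimic the proof of Lemma \ref{lem:bounded-boundary} and use $\omega$-saturation to upgrade this pointwise finiteness to a uniform bound. Concretely, there is a formula $\beta(x,\bar y)$ expressing that $x\in bd(\alpha^{M_t}(\bar y))$; here I would use the Remark after Lemma \ref{lem:Let--be}, which says that ``infinite'' is a first-order expressible property (a definable set is infinite iff it has no interior), so the boundary of a definable set is itself uniformly definable. If no uniform bound $C'$ on $|bd(\alpha^{M_t}(\bar c))|$ existed, then for every $n$ there would be a tuple $\bar c_n$ with at least $n$ distinct boundary points, and by $\omega$-saturation one could realize a type asserting the existence of infinitely many distinct points all satisfying $\beta(x,\bar c)$ for a single $\bar c$, contradicting the finiteness guaranteed by Lemma \ref{lem:Let--be}(4).

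Once the uniform boundary bound $C'$ is in hand, I would apply Lemma \ref{lem:bounded-connected} fiber by fiber: each fiber $\alpha^{M_t}(\bar c)$ has $|bd(\alpha^{M_t}(\bar c))|\leq C'$, so $d_M(\alpha^{M_t}(\bar c))\leq C'\cdot K$, and setting $C=C'\cdot K+1$ finishes the argument. One technical point to address is that Lemma \ref{lem:bounded-connected} is stated for a \emph{definably connected} structure $M$, whereas the proposition only assumes $M$ is $\omega$-saturated and t.t.t; so I would first decompose $M_t$ into its finitely many connected components (finite by the t.t.t hypothesis, via Lemma \ref{lem:Let--be}(3) applied to $X=M_t$) and apply the fiber bound on each component separately, summing the resulting bounds.

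The main obstacle I expect is the uniform definability of the boundary operator, i.e. producing the single formula $\beta(x,\bar y)$. One must verify carefully that ``$x$ is a boundary point of $\alpha^{M_t}(\bar y)$'' can be written as a first-order condition uniformly in $\bar y$: this requires expressing both ``every neighborhood of $x$ meets $\alpha^{M_t}(\bar y)$'' and ``every neighborhood of $x$ meets its complement'' using the basis formula $\phi$, which is exactly the kind of statement the Remark after Lemma \ref{lem:Let--be} licenses. Granting that, the $\omega$-saturation compactness argument is routine, and the rest is bookkeeping with the two preceding lemmas.
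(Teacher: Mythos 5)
Your proposal is correct and follows essentially the same route as the paper: use $\omega$-saturation (together with the pointwise finiteness of boundaries from Lemma \ref{lem:Let--be}) to get a uniform bound $n$ on $\vert bd(\alpha^{M_t}(\bar c))\vert$, decompose $M_t$ into its finitely many definably connected components, and apply Lemma \ref{lem:bounded-connected} on each component to get the bound $d_M(M_t)\cdot K\cdot n$. Your extra care about uniform definability of the boundary is a detail the paper leaves implicit, but it is not a different argument.
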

\begin{proof}
  First of all, let $K\in\mathbb{N}$ a number such that for each point $b\in M_{b}$
  we have $\vert bd(\phi^{M_{t}}(b)\vert\leq K$. By
  lemma \ref{lem:bounded-boundary}, for each $\bar{c}\in M^{l}$ there
  exists a number $n_{\bar{c}}\in\mathbb{N}$ such that $bd(\alpha^{M_{t}}(\bar{c}))<n_{c}$.
  Therefore, since $M$ is $\omega$-saturated, there exists some $n\in\mathbb{N}$
  such that for each tuple $\bar{c}\in M^{l}$, $bd(\alpha^{M_{t}}(\bar{c}))<n$.

  We'll show that we can choose $C$ to be $d_{M}(M_{t})\cdot K\cdot n$.
  Let $m=d_{M}(M_{t})$ and let $\{Y_{1},\dots,Y_{m}\}$ be pairwise
  disjoint definably connected subsets such that $M_{t}=\cup_{i=1}^{m}Y_{i}$.
  By lemma \ref{lem:bounded-connected}, for each $1\leq i\leq m$ and
  $\bar{c}\in M^{l}$, $d_{M}(\alpha^{M_{t}}(\bar{c})\cap Y_{i})<n\cdot K$.
  The proposition then follows immediately.
\end{proof}
We'll now use this boundedness result in order to prove that a certain
set of first order properties are necessary and sufficient for an
$\omega$-saturated first order topological structure to be t.t.t.
\begin{thm}
  Let $(M,\phi)$ be an $\omega$-saturated 1-dimensional t.t.t structure.
  Then $M$ has the following properties:\label{thm:1D-t.t.t-props}
  \begin{enumerate}
  \item For every formula $\alpha(x,y_{1},\dots,y_{l})\in L$, there exists
    some $C\in\mathbb{N}$ such that for every tuple $\overline{c}\in M^{l}$,
    there exist $C$ points $x_{1},\dots,x_{C}$ in $\alpha^{M_{t}}(\overline{c})$
    such that $\alpha^{M_{t}}(\overline{c})\backslash\{x_{1},\dots,x_{C}\}$
    is open.
  \item For every formula $\alpha(x,y_{1},\dots,y_{l})\in L$, there exists
    a constant $C\in\mathbb{N}$ such that for all $c_{1},\dots,c_{l}\in M$,
    $d_{M}(\alpha^{M_{t}}(c_{1},\dots,c_{l}))<C$.
  \item For any pair of formulas $\alpha(x,y_{1},\dots,y_{s})$ and $\beta(x,y_{1},\dots,y_{t})$
    in $L$, and for all $\overline{a}\in M^{s}$ and $\overline{b}\in M^{t}$,
    if $B=\beta^{M_{t}}(\overline{b})\subset\alpha^{M_{t}}(\overline{a})=A$
    is closed and non empty and doesn't have an interior in $A$, then
    $A$ has an interior in $M_{t}$.
  \end{enumerate}
  Furthermore, if $(M,\phi)$ is a first order topological structure which
satisfies these three properties and is Hausdorff, then $M$ is a 
1-dimensional t.t.t structure.
\end{thm}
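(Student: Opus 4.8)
The statement has two halves, and I would prove them separately: first that an $\omega$-saturated $1$-dimensional t.t.t.\ structure satisfies (1)--(3), and then the converse, that (1)--(3) together with Hausdorffness force $M$ to be $1$-dimensional t.t.t. For the forward direction, property (2) is exactly Proposition~\ref{prop:bounded-connected} restated, so nothing is needed. For (1) I would first note that for a definable $A=\alpha^{M_t}(\bar c)$ the interior, closure and hence $bd(A)$ are uniformly definable; for instance $x\in\mathrm{int}(A)$ is expressed by $\exists\bar y\,(\phi(x,\bar y)\wedge\forall z(\phi(z,\bar y)\to\alpha(z,\bar c)))$, and $\bar A$ similarly. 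By Lemma~\ref{lem:Let--be}(4) each $bd(A)$ is finite, so the argument in the first paragraph of the proof of Proposition~\ref{prop:bounded-connected} — realizing, via $\omega$-saturation, the type asserting $k$ distinct boundary points for every $k$ — produces a single $C$ bounding $|bd(\alpha^{M_t}(\bar c))|$ uniformly in $\bar c$. Removing $A\cap bd(A)$ (padding with repetitions to list exactly $C$ points) leaves $A\setminus(A\cap bd(A))=\mathrm{int}(A)$, which is open; this is (1). For (3) I would argue by contraposition: if $A=\alpha^{M_t}(\bar a)$ has empty interior in $M_t$, then by Lemma~\ref{lem:Let--be}(3) $A$ is a union of finitely many open connected pieces together with a finite set, and any nonempty open piece would provide interior, so $A$ must be finite; a finite subset of a Hausdorff space is discrete, hence every nonempty $B\subseteq A$ is open in $A$ and so has interior in $A$. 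Thus no $B$ as in the hypothesis of (3) can exist unless $A$ already has interior in $M_t$.

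For the converse I must verify the three defining clauses of ``$1$-dimensional t.t.t.'' from (1)--(3) and Hausdorffness. Condition (A) follows from (1): writing $A=(A\setminus F)\cup(A\cap F)$ with $F=\{x_1,\dots,x_C\}$, the set $A\setminus F$ is definable and open, while $A\cap F$ is finite and, since points are closed in a Hausdorff space, is a boolean combination of definable open sets; hence so is $A$. The clause $d_M(X)<\infty$ for all definable $X$ is immediate from (2). The remaining clause — that $M$ has dimension, and in fact $D_M(X)\le 1$ — is where (3) does the real work. I would show $D_M(X)\not\ge 2$ for every closed definable $X$: unwinding the definition, $D_M(X)\ge 2$ would furnish a closed definable $Y\subseteq X$ with no interior in $X$ and $D_M(Y)\ge 1$, and then a nonempty closed definable $Z\subseteq Y$ with no interior in $Y$. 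Applying (3) with $A=Y$ and $B=Z$ gives that $Y$ has interior in $M_t$; but an open subset of $M_t$ contained in $Y\subseteq X$ is also open in the subspace $X$, so $Y$ would have interior in $X$, contradicting its choice. Thus $D_M(X)\le 1$ for all closed definable $X$, so $M$ has dimension and is at most $1$-dimensional, with $D_M(M_t)=1$ whenever $M_t$ is infinite (using (2) to rule out $M_t$ being infinite and discrete).

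The routine parts are (2) and the forward half of (1), and the only genuine subtlety I anticipate is the converse dimension bound: getting the closedness bookkeeping and the ``no interior in $X$ versus no interior in $M_t$'' distinction exactly right, so that property (3) applies to $A=Y$, $B=Z$ and the interior it produces in $M_t$ transfers to interior in the subspace $X$. A secondary point to be careful about is the definability of $bd(A)$ needed to run the $\omega$-saturation argument in (1), together with the degenerate case $M_t$ finite when pinning down the precise value of $D_M(M_t)$.
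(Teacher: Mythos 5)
Your proof is correct and takes essentially the same route as the paper's: property (2) is Proposition~\ref{prop:bounded-connected}, property (1) comes from finiteness of boundaries (Lemma~\ref{lem:Let--be}) plus $\omega$-saturation, property (3) is the expression of $1$-dimensionality, and the converse extracts condition (A), finiteness of $d_M$, and $D_M\leq 1$ from (1)--(3) respectively. The paper's own proof consists of one-line versions of exactly these steps, so your write-up simply supplies the details it leaves implicit --- notably the contrapositive argument for (3) via Lemma~\ref{lem:Let--be}(3) (which also covers non-closed $A$, a case the paper's ``follows from $D(M)=1$'' glosses over), the unwinding of $D_M(X)\geq 2$ against property (3), and the observation that getting $D_M(M_t)=1$ exactly (rather than $\leq 1$) requires $M_t$ to be infinite and non-discrete, which uses (2).
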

\begin{proof}
  First we'll see that the three properties are sufficient. Assume that
  $(M,\phi)$ is a first order topological structure
  such that $M_{t}$ is Hausdorff and has the three properties in the
  theorem. 

  By property 1, every definable set $X$ is a boolean combination of
  open sets so $M$ has property (A). By property 2, every definable
  set has a finite number of definably connected components. Finally,
  by property 3, $D(M)=1$.

  Now we'll prove the first part of the theorem. Let $(M,\phi)$ be
  an $\omega$-saturated t.t.t structure. By the definition of t.t.t,
  $M_{t}$ is Hausdorff. We'll now prove that $M$ has each one of the
  required properties.
  \begin{enumerate}
  \item Let $\alpha(x,y_{1},\dots,y_{l})\in L$. Since $M$ is t.t.t, for
    every $\overline{c}\in M^{l}$, there exist $C$ points $x_{1},\dots,x_{C}$
    in $\alpha^{M_{t}}(\overline{c})$ such that $\alpha^{M_{t}}(\overline{c})\backslash\{x_{1},\dots,x_{C}\}$
    is open. Since $M$ is $\omega$-saturated, we can choose $C$ uniformly
    for all $\overline{c}\in M^{l}$. 
  \item This property is essentially proposition \ref{prop:bounded-connected}.
  \item This follows from the fact that $D(M)=1$.
  \end{enumerate}
\end{proof}
\begin{cor}
  Let $\phi(x,y_{1},\dots,y_{k})$ be a formula and let $(M,\phi)$
  be a $1$-dimensional t.t.t structure which is $\omega$-saturated.
  In addition, let $N$ be a model such that $N\equiv M$. Then $(N,\phi)$
  is a $1$-dimensional t.t.t structure.\end{cor}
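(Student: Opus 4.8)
The plan is to treat Theorem~\ref{thm:1D-t.t.t-props} as a first-order axiomatization of the class of Hausdorff first order topological structures that are $1$-dimensional t.t.t., and then to push that axiomatization across the elementary equivalence $M\equiv N$. By the forward half of Theorem~\ref{thm:1D-t.t.t-props}, $M$ is Hausdorff and satisfies properties $1$, $2$ and $3$. If each of these assertions, together with the statement that $\{\phi^{N_t}(\bar a)\}$ is a basis, can be written as a first-order sentence (or as a scheme with one instance per formula $\alpha$, per pair $\alpha,\beta$, and per numerical constant), then each such sentence holds in $N$ as well, and the converse half of Theorem~\ref{thm:1D-t.t.t-props} gives at once that $(N,\phi)$ is a $1$-dimensional t.t.t.\ structure. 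Thus the whole corollary reduces to checking expressibility relative to the distinguished formula $\phi$; note that since the corollary does not assume $N$ is $\omega$-saturated, I may only use first-order transfer, with $\omega$-saturation of $M$ entering solely to produce the fixed numerical constants in $M$.

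The topological ingredients are routine. Relative to the basis $\phi$, ``$U$ is open'', ``$U$ is closed'', ``$x$ is interior to $U$'', ``$x\in bd(U)$'' and ``$U$ has nonempty interior'' all unwind into first-order formulas; for instance openness of a definable $U$ is $\forall x\,(x\in U\rightarrow\exists\bar b\,(x\in\phi(\bar b)\wedge\phi(\bar b)\subseteq U))$. Consequently the basis axioms, Hausdorffness, property~$1$ (using that ``$\alpha^{M_t}(\bar c)\setminus\{x_1,\dots,x_C\}$ is open'' is first-order for each fixed $C$) and property~$3$ (using that ``closed'', ``$\subseteq$'', ``nonempty'', ``has no interior in $A$'' and ``$A$ has interior'' are all first-order) are first-order schemes, each instance of which holds in $M$ and hence transfers to $N$. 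I would also transfer the two boundary bounds, which are first-order because $bd(\cdot)$ is uniformly definable and bounding the size of a definable set is first-order: the sentence $\forall b\,|bd(\phi^{M_t}(b))|\le K$ from Lemma~\ref{lem:bounded-boundary}, and for each $\alpha$ the sentence $\forall\bar c\,|bd(\alpha^{M_t}(\bar c))|<n_\alpha$ coming from Lemma~\ref{lem:Let--be}(4) and $\omega$-saturation.

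The main obstacle is property~$2$, since the definition of $d_M(X)$ quantifies over \emph{definable} clopen partitions of $X$, so that ``$d_M(X)<C$'' is not first-order on its face. The device I would use to express ``$X$ has at least $j$ definably connected components'' is that relatively clopen pieces are cut out by basis sets whose boundary misses $X$: if $U=\phi(\bar b)$ satisfies $bd(U)\cap X=\emptyset$ then $X\cap U$ is clopen in $X$, and by Lemma~\ref{lem:Let--be}(3) together with the finiteness of $bd(X)$ and the uniform bound $K$, every relatively clopen piece of $X$ is in fact a union of a \emph{uniformly bounded} number $r$ of such basis sets. Granting this, ``$d_M(X)\ge j$'' becomes the first-order statement that there exist $j$ nonempty, pairwise disjoint, covering subsets of $X$, each presented as $X$ intersected with a union of at most $r$ basis sets whose boundaries miss $X$; bounding the number of components by a fixed $C_\alpha$ is then the negation of such a sentence. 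The technical heart I expect to have to nail down is precisely the uniform bound $r$ on the number of basis sets needed, which is where the finite-boundary structure of $1$-dimensional t.t.t.\ structures is essential.

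Alternatively, and to keep the argument self-contained, I would bound $d_N$ in $N$ directly by re-running the injective counting argument of Lemma~\ref{lem:bounded-connected}: that argument applies to \emph{any} finite relatively clopen partition $Y_1,\dots,Y_d$ of $\alpha^{N_t}(\bar c)$, uses only Hausdorffness of $N$ to produce the separating basis sets around the finitely many points of $bd(\alpha^{N_t}(\bar c))$, and injects the non-singleton pieces into $\bigcup_i bd(U_i)$, yielding $d\le|bd(\alpha^{N_t}(\bar c))|\cdot K\le n_\alpha K$; since this holds for every finite partition it both bounds and renders finite $d_N(\alpha^{N_t}(\bar c))$, giving property~$2$ without presupposing that $N$ is already t.t.t. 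The one place this estimate degenerates is the whole space, where $bd(N_t)=\emptyset$, so I must separately transfer a bound on $d_N(N_t)$ and then count componentwise as in Proposition~\ref{prop:bounded-connected}; I expect phrasing ``$N_t$ has at most $d_M(M_t)$ clopen components'' in first-order to be the subtlest point, again resolved by the basis-set presentation of clopen pieces from the previous paragraph. With property~$2$ secured in $N$, the converse half of Theorem~\ref{thm:1D-t.t.t-props} finishes the proof.
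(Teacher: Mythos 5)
Your skeleton is exactly the paper's proof: read Theorem \ref{thm:1D-t.t.t-props} as a first-order axiomatization of $1$-dimensional t.t.t structures and push it across $M\equiv N$; the paper disposes of the corollary in one line this way. But you mislocate, and then mishandle, the one point that needs care. Property 2 does not need to be a \emph{single} first-order sentence per $(\alpha,C)$: elementary equivalence preserves every sentence, hence every scheme, and the correct scheme for ``for all $\bar c$, $d(\alpha(\bar c))<C$'' is indexed not only by $\alpha$ and $C$ but also by the candidate partitions. For each tuple of formulas $\psi_1(x,\bar z_1),\dots,\psi_C(x,\bar z_C)$ one writes the single sentence asserting that no choice of $\bar c,\bar z_1,\dots,\bar z_C$ makes the sets $\psi_i(x,\bar z_i)\cap\alpha(x,\bar c)$ into $C$ pairwise disjoint nonempty relatively clopen sets covering $\alpha(\bar c)$ (relative clopenness unwinds into a first-order condition via the basis $\phi$, as in your second paragraph). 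Each instance holds in $M$, with the uniform $C$ supplied by $\omega$-saturation of $M$, hence each holds in $N$, and their totality says precisely that $d_N(\alpha^{N_t}(\bar c))<C$ for all $\bar c$. Your first paragraph allows schemes, but only indexed ``per formula $\alpha$, per pair $\alpha,\beta$, and per numerical constant''; that unnecessarily narrow indexing is what forces you into the machinery of your third paragraph.

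That machinery contains a genuine error: the claim that every relatively clopen piece of a definable $X$ is a union of a uniformly bounded number of basis sets whose boundaries miss $X$ is not just ``the technical heart to nail down'' --- it is false, and the paper's own last example in Section \ref{sec:Structures-Without-Splitting} refutes it. In $\overline{R}_{rint}$ the basis sets are intervals of length less than $1$, and the whole space $\overline{\mathbb{R}}$ (an $\omega$-saturated $1$-dimensional t.t.t structure, definable and clopen in itself) is not covered by any finite union of basis sets, since such a union is bounded above while $\overline{\mathbb{R}}$ is not. Your fallback route of re-running the counting argument of Lemma \ref{lem:bounded-connected} directly in $N$ is sound in outline --- Hausdorffness and the two numerical boundary bounds do transfer as honest first-order sentences, and that argument uses no saturation --- but as written it leans on the same false device to obtain the bound on $d_N(N_t)$, so the gap propagates into both of your routes. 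Replacing that step by the scheme transfer described above repairs everything, and also shows why the paper can treat the whole corollary as immediate.
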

\begin{proof}
  This is immediate from the fact that all of the properties in theorem
  \ref{thm:1D-t.t.t-props} can be expressed in first order logic.
\end{proof}

\section{Structures With Splitting \label{sec:Structures-With-Splitting}}

\subsection{Introduction}

Our main result in this section is that for any 1-dimensional $\omega$-saturated
t.t.t structure, if removing any point splits the space into more
that one connected component then there exists a finite set $X\subset M_{t}$
such that each connected component of $M_{t}\backslash X$ is o-minimal. 

In order to prove this, we first obtain some intermediate results
such as the fact that the equivalence relation $y\sim_{x}z$ specifying
if $y$ and $z$ are in the same connected component of $M_{t}\backslash\{x\}$
is a definable relation in $M_{t}^{3}$. We also introduce a notion
of {}``local flatness'' which is used as a stepping stone between
t.t.t structures and o-minimality.

For example, consider the structure $R_{int}=\langle\mathbb{R},I(x,y,z)\rangle$
where $I(x,y,z)$ is true if $z$ lies on the interval between $x$
and $y$. In example \ref{exa:interval-example} we'll show that $R_{int}$
has the property that removing any point splits the space into more
that one connected component. We'll use this fact to show that the
relation $y\sim_{x}z$ is indeed definable. In the end of this section
we'll demonstrate how applying the construction of the order to $R_{int}$
gives the standard ordering on the reals. 

In this section we're assuming that $M_{t}$ has no isolated points.
This doesn't pose a problem because $M_{t}$ has at most a finite
number of isolated points so we can remove them without affecting
any of our results.

\subsection{A Definable Relation}

The following equivalence relation is useful for analyzing what happens
when a point is removed from a structure.
\begin{defn}
  Let $M$ be a 1-dimensional t.t.t structure. Let $x,a,b\in M_{t}$.
  Then $a\sim_{x}b$ will be a relation which is true iff $a$ and $b$
  are in the same definable connected component of $M_{t}\backslash\{x\}$.\end{defn}
\begin{rem*}
  Note that by proposition \ref{prop:bounded-connected}, there exists
  an $N\in\mathbb{N}$ such that for each point $x\in M_{t}$, $\sim_{x}$
  has less than $N$ equivalence classes. 

  Our first goal is to show that if for every $x\in M_{t}$ we have
  $d_{M}(M_{t}\backslash\{x\})\geq2$, then $\sim_{x}\subset M_{t}^{3}$
  is definable.
\end{rem*}
We start by showing that for any $x$ such that $d_{M}(M_{t}\backslash\{x\})>2$,
$x\in acl(\emptyset)$.

The following technical lemma will be used many times throughout the
proof.

Intuitively, the lemma says that after removing two points, the space
is divided into three distinct components. The part {}``in between''
the points we removed and one additional side for each of the points.

\begin{lem}
  Let $M$ be a 1-dimensional $\omega$-saturated t.t.t structure, $C\subset M_{t}$
  an open connected definable subset, $a\neq b\in C$ and $2\leq k,l\in\mathbb{N}$
  such that $d_{M}(C\backslash\{a\})=k$ and $d_{M}(C\backslash\{b\})=l$.
  Let $A_{1},\dots,A_{k}$ and $B_{1},\dots,B_{l}$ be the connected
  components of $C\backslash\{a\}$ and $C\backslash\{b\}$ respectively
  such that $a\in B_{1}$ and $b\in A_{1}$. Then:\label{lem:disjoint-components}
  \begin{enumerate}
  \item $bd(\cup_{i=2}^{k}A_{i})=\{a\}$
  \item $bd(\cup_{j=2}^{l}B_{j})=\{b\}$
  \item $bd(A_{1}\cap B_{1})=\{a,b\}$ and for every open set $U$ containing
    $a$ or $b$, $U\cap(A_{1}\cap B_{1})\neq\emptyset$.
  \item The following union is disjoint:\[
    C=(\cup_{i=2}^{k}A_{i})\cup\{a\}\cup(A_{1}\cap B_{1})\cup\{b\}\cup(\cup_{j=2}^{l}B_{j})\]

  \end{enumerate}
\end{lem}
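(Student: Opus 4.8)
The plan is to work entirely inside the subspace $C$, so that all closures and boundaries below are understood relative to $C$; this is the only reading under which the asserted equalities such as $bd(\cup_{i=2}^{k}A_{i})=\{a\}$ can hold, since a component of $C\backslash\{a\}$ may well accumulate on $bd(C)$ in $M_{t}$. The first order of business is a normalization step: since $C$ is open and $M_{t}$ is Hausdorff, $C\backslash\{a\}$ is open, so each $A_{i}$, being clopen in $C\backslash\{a\}$, is open in $M_{t}$ and satisfies $\bar{A_{i}}\cap C\subseteq A_{i}\cup\{a\}$; hence $bd(A_{i})\subseteq\{a\}$. Because $C$ is connected and $\emptyset\neq A_{i}\subsetneq C$, the set $A_{i}$ cannot be clopen in $C$, so its boundary is nonempty and therefore equals $\{a\}$; in particular $a\in\bar{A_{i}}$ for every $i$. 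The symmetric argument gives $bd(B_{j})=\{b\}$ and $b\in\bar{B_{j}}$ for every $j$.

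Parts (1) and (2) then fall out immediately. Since a finite union of closures is the closure of the union, $\overline{\cup_{i=2}^{k}A_{i}}\cap C=\cup_{i=2}^{k}(\bar{A_{i}}\cap C)\subseteq(\cup_{i=2}^{k}A_{i})\cup\{a\}$, which gives $bd(\cup_{i=2}^{k}A_{i})\subseteq\{a\}$; and $a\in\bar{A_{2}}$ forces equality. Part (2) is identical with the roles of $a,b$ and $A,B$ exchanged.

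The heart of the argument, and the step I expect to be the main obstacle, is establishing the two containments $\cup_{i=2}^{k}A_{i}\subseteq B_{1}$ and $\cup_{j=2}^{l}B_{j}\subseteq A_{1}$, since these are exactly what upgrade the evident \emph{covering} $C=(\cup_{i}A_{i})\cup\{a\}$ to the \emph{disjoint} decomposition in part (4). Fix $i\geq2$. As $b\in A_{1}$ and the $A$'s are disjoint, $b\notin A_{i}$, so $A_{i}\subseteq C\backslash\{b\}=\cup_{j=1}^{l}B_{j}$; since $A_{i}$ is connected and the $B_{j}$ are relatively clopen in $C\backslash\{b\}$, the set $A_{i}$ lies inside a single $B_{j}$. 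To identify $j$, I use $a\in\bar{A_{i}}\subseteq\bar{B_{j}}$ together with $\bar{B_{j}}\cap C\subseteq B_{j}\cup\{b\}$ and $a\neq b$ to conclude $a\in B_{j}$; as $a\in B_{1}$ and the $B$'s are disjoint, this forces $j=1$. The symmetric argument gives $\cup_{j=2}^{l}B_{j}\subseteq A_{1}$. With these in hand, part (4) is pure bookkeeping: every $p\in C\backslash\{a,b\}$ lies in a unique $A_{i}$ and a unique $B_{j}$, and the containments rule out the case $i\geq2$ and $j\geq2$, so $p$ falls into exactly one of $\cup_{i\geq2}A_{i}$, $A_{1}\cap B_{1}$, $\cup_{j\geq2}B_{j}$, while $a$ and $b$ account for the two remaining singletons.

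Finally, for part (3), $A_{1}\cap B_{1}$ is open, and from $\bar{A_{1}}\cap C\subseteq A_{1}\cup\{a\}$ and $\bar{B_{1}}\cap C\subseteq B_{1}\cup\{b\}$ one gets $\overline{A_{1}\cap B_{1}}\cap C\subseteq(A_{1}\cup\{a\})\cap(B_{1}\cup\{b\})=(A_{1}\cap B_{1})\cup\{a,b\}$, using $a\in B_{1}$, $b\in A_{1}$ and $a\neq b$; hence $bd(A_{1}\cap B_{1})\subseteq\{a,b\}$. For the reverse inclusion and the neighborhood statement simultaneously, I take any open $U$ with $a\in U$ and shrink it to $U\cap B_{1}$, still an open neighborhood of $a$ since $a\in B_{1}$; because $a\in\bar{A_{1}}$ this meets $A_{1}$, and any such point lies in $A_{1}\cap B_{1}$, so $U\cap(A_{1}\cap B_{1})\neq\emptyset$. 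The symmetric argument at $b$ (shrinking to $U\cap A_{1}$ and using $b\in\bar{B_{1}}$) completes both the claim that every neighborhood of $a$ or $b$ meets $A_{1}\cap B_{1}$ and, since $a,b\notin A_{1}\cap B_{1}$, the equality $bd(A_{1}\cap B_{1})=\{a,b\}$.
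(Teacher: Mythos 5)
Your proof is correct, and it follows the same overall skeleton as the paper's (boundary facts for the components, then the containment $\cup_{i\geq2}A_{i}\subseteq B_{1}$, then part (4), then part (3)), but the two crucial steps are executed by genuinely different arguments. For the containment, the paper proves that the set $(\cup_{i=2}^{k}A_{i})\cup\{a\}$ is connected in $C\backslash\{b\}$ (by a clopen-partition contradiction that exploits openness of $A_{1}$ and $b\in A_{1}$) and then invokes the fact that $B_{1}$ is the component of $C\backslash\{b\}$ containing $a$; you instead place each $A_{i}$ individually inside a single $B_{j}$ by connectedness and identify $j=1$ via the closure relations $a\in\overline{A_{i}}\subseteq\overline{B_{j}}$ and $\overline{B_{j}}\cap C\subseteq B_{j}\cup\{b\}$, which is more modular and avoids having to show that a component union a limit point stays connected. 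For part (3), the paper first establishes $A_{1}\cap B_{1}\neq\emptyset$ by contradiction (using parts 1, 2, 4), computes the boundary, and only then deduces the neighborhood property; you obtain the neighborhood property directly from $a\in\overline{A_{1}}$ together with openness of $B_{1}$, so non-emptiness comes for free. Two further merits of your write-up: you make explicit that all boundaries must be read relative to $C$ (the paper leaves this implicit, and the statement is false with boundaries taken in $M_{t}$ when components accumulate on $bd(C)$), and your normalization step actually proves the assertion $a\in\overline{A_{i}}$, which the paper states without justification. One small point of care: since ``connected'' here means definably connected, your appeals to connectedness (e.g.\ that $A_{i}$ lies in a single $B_{j}$, and that $A_{i}$ cannot be clopen in $C$) are legitimate only because all the sets involved ($A_{i}\cap B_{j}$, $C\backslash A_{i}$, etc.) are definable; this holds throughout your argument, but it is worth saying explicitly.
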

\begin{proof}
  First we'll prove $1$.

  Since $C$ is connected, $a\in\overline{A_{i}}$ for each $1\leq i\leq k$.
  Therefore, since $a\notin A_{i}$ for each $2\leq i\leq k$, $a\in bd(\cup_{i=2}^{k}A_{i})$.
  In addition, $\cup_{i=2}^{k}A_{i}$ is open which means that $bd(\cup_{i=2}^{k}A_{i})\subset\{a\}\cup A_{1}$.
  But $A_{1}$ is open as well and disjoint to $\cup_{i=2}^{k}A_{i}$
  so $bd(\cup_{i=2}^{k}A_{i})=\{a\}$.

  Similarly, $bd(\cup_{j=2}^{l}B_{j})=\{b\}$.

  We'll now show that $\cup_{i=2}^{k}A_{i}\subset B_{1}$. In order
  to do that we first prove that $(\cup_{i=2}^{k}A_{i})\cup\{a\}$ is
  connected in $C\backslash\{b\}$. Assume for contradiction that $X_{1}$
  and $X_{2}$ form a clopen partition of $(\cup_{i=2}^{k}A_{i})\cup\{a\}$
  in $C\backslash\{b\}$. Without loss of generality, $a\in X_{1}$
  which means that $a\notin\overline{X_{2}}$. Furthermore, $A_{1}$
  is open and $b\in A_{1}$ which means that $b\notin\overline{X_{2}}$.
  Together this means that $X_{2}\subset(\cup_{i=2}^{k}A_{i})$ and
  $b\notin bd(X_{2})$. Therefore, $X_{2}$ is clopen in $C$ which 
  is a contradiction to the fact that $C$ is connected.

  Now, $B_{1}$ is the connected component of $C\backslash\{b\}$ containing
  $a$. So from the fact that $(\cup_{i=2}^{k}A_{i})\cup\{a\}$ is connected
  in $C\backslash\{b\}$ it follows that $\cup_{i=2}^{k}A_{i}\subset B_{1}$. 

  We're now ready to prove $4$. It's immediate that \[
  (\cup_{i=2}^{k}A_{i})\cap(A_{1}\cap B_{1})=(\cup_{j=2}^{l}B_{j})\cap(A_{1}\cap B_{1})=\emptyset\]
  In addition, since $(\cup_{i=2}^{k}A_{i})\subset B_{1}$ it follows
  that $(\cup_{j=2}^{l}B_{j})\cap(\cup_{i=2}^{k}A_{i})=\emptyset$.
  This shows that the union is disjoint so all that's left is to show
  that it's equal to $C$.

  Let $c\in C$ be a point such that \[
  c\notin(\cup_{j=2}^{l}B_{j})\cup\{a\}\cup(\cup_{i=2}^{k}A_{i})\cup\{b\}\]
  Since $c\in C\backslash\{a\}$ and $c\notin(\cup_{i=2}^{k}A_{i})$
  it follows that $c\in A_{1}$. Similarly, $c\in B_{1}$. Therefore,
  $c\in A_{1}\cap B_{1}$.

  We're now ready to prove $3$.

  First of all, assume for contradiction that $A_{1}\cap B_{1}=\emptyset$.
  Then by parts 1, 2 and 4 of the lemma, the sets $(\cup_{i=2}^{k}A_{i})\cup\{a\}$
  and $\{b\}\cup(\cup_{j=2}^{l}B_{j})$ would form a clopen partition
  of $C$ which is a contradiction to the assumption that $C$ is connected.

  We'll now show that $bd(A_{1}\cap B_{1})=\{a,b\}$ On the one hand,
  $a\in int(B_{1})$ and $a\in\overline{A_{1}}$ so $a\in bd(A_{1}\cap B_{1})$.
  Similarly, $b\in A_{1}\cap B_{1}$. On the other hand, $A_{1}\cap B_{1}$
  is open so \[
  bd(A_{1}\cap B_{1})\subset(\cup_{i=2}^{k}A_{i})\cup\{a\}\cup\{b\}\cup(\cup_{j=2}^{l}B_{j})\]
  But $\cup_{i=2}^{k}A_{i}$ and $\cup_{j=2}^{l}B_{j}$ are open well
  so $bd(A_{1}\cap B_{1})\subset\{a\}\cup\{b\}$. Together we get that
  $bd(A_{1}\cap B_{1})=\{a,b\}$.

  Finally, let $U$ be an open set containing $a$ or $b$. Since $bd(A_{1}\cap B_{1})=\{a,b\}$,
  it follows that $U\cap(A_{1}\cap B_{1})\neq\emptyset$.
\end{proof}

\begin{lem}
  Let $M$ be a 1-dimensional $\omega$-saturated connected t.t.t structure.
  Let $D\subset M_{t}$ be an open definable subset, $E(x,a,b)\subset M_{t}^{3}$
  a definable relation and $N\in\mathbb{N}$ such that:\label{lem:infinite-sep}
  \begin{enumerate}
  \item $N\geq2$.
  \item For every $x\in D$ and $a,b\in M_{t}$, $a\sim_{x}b\Rightarrow E(x,a,b)$.
  \item For every $x\in D$, $E(x,a,b)$ is an equivalence relation with $N$
    classes.
  \end{enumerate}
  Then for each point $a\in D$, there exists a point $b\in D$ such
  that the definable set $X=\{x\in D\vert\neg E(x,a,b)\}$ is infinite.\end{lem}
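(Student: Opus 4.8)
The plan is to argue by contradiction, reducing the statement to a purely one--dimensional separation problem and then exploiting the rigidity forced by the constancy of the number of $E$-classes. So fix $a\in D$ and suppose, towards a contradiction, that for every $b\in D$ the set $X_b=\{x\in D:\neg E(x,a,b)\}$ is finite. Since $M$ is $\omega$-saturated and, by the remark following Lemma \ref{lem:Let--be}, finiteness of a definable subset of $M_t$ is a first order property (it is equivalent to having empty interior), the cardinalities $|X_b|$ are uniformly bounded: there is $m\in\mathbb N$ with $|X_b|\le m$ for all $b\in D$. From this I would first extract a symmetric version of the assumption. If $p,q\in D$ and $\neg E(x,p,q)$, then because $E(x,\cdot,\cdot)$ is an equivalence relation we cannot have both $E(x,a,p)$ and $E(x,a,q)$; hence $\{x\in D:\neg E(x,p,q)\}\subseteq X_p\cup X_q$, so this set has at most $2m$ elements for \emph{every} pair $p,q\in D$.

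Next I would record two topological facts about a fixed parameter. For $x\in D$, property $2$ says $\sim_x$ refines $E(x,\cdot,\cdot)$, so each $E_x$-class is a union of connected components of $M_t\setminus\{x\}$; since $N\ge 2$ these classes are nonempty, pairwise disjoint, open, and partition $M_t\setminus\{x\}$. Moreover, exactly as in the proof of Lemma \ref{lem:disjoint-components}, connectedness of $M_t$ forces $x\in\overline C$ for every component $C$ of $M_t\setminus\{x\}$; as $D$ is an open neighbourhood of $x$ this shows every $E_x$-class meets $D$ in a nonempty open (hence infinite, since $M_t$ has no isolated points) set. In particular, fixing any $x_0\in D$ I can choose distinct components $P,Q$ of $M_t\setminus\{x_0\}$ lying in different $E_{x_0}$-classes together with points $p\in P\cap D$ and $q\in Q\cap D$, so that $\neg E(x_0,p,q)$.

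The heart of the proof is then to show that this one instance of separation spreads to an open set of parameters, i.e. that the one--dimensional definable set $S_{p,q}=\{x\in D:\neg E(x,p,q)\}$ is infinite for a suitable choice of $p,q$; by Lemma \ref{lem:Let--be}$(3)$ (in the form of the remark after it) this is the same as saying $S_{p,q}$ has nonempty interior, and it contradicts the bound $|S_{p,q}|\le 2m$ obtained above. To produce the interior I would work inside the middle region $I_{pq}=A_1\cap B_1$ supplied by Lemma \ref{lem:disjoint-components} applied to the pair $(p,q)$: this is an open set with $bd(I_{pq})=\{p,q\}$ on which $p$ and $q$ lie in different components of $M_t\setminus\{x\}$, so that the topological separation $p\not\sim_x q$ already holds on an open set. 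The binary, definable map $x\mapsto[\,\neg E(x,p,q)\,]$ on $I_{pq}$ therefore has value $1$ at $x_0$, and it remains to rule out that its $1$-set is finite. For this I would take $x_0$ to be a \emph{generic} parameter and use that the number of $E_x$-classes is constantly $N$: the finiteness and boundary results (Lemma \ref{lem:Let--be} and Proposition \ref{prop:bounded-connected}) confine the parameters at which the $N$-class pattern can change to a finite set $B\subset D$, and on the component of $I_{pq}\setminus B$ containing $x_0$ the component of $M_t\setminus\{x\}$ carrying $p$ and the one carrying $q$ stay in distinct classes, giving $\neg E(x,p,q)$ on an open set.

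The step I expect to be the genuine obstacle is precisely this last transfer, and it is caused by $E$ being only a coarsening of $\sim_x$: knowing $p\not\sim_x q$ never by itself yields $\neg E(x,p,q)$, so one really must control how the $N$ classes vary with $x$. If the direct ``locally constant pattern'' argument proves awkward, the fallback I would use is a double counting on a large finite $S\subseteq D$: the bound $|S_{p,q}|\le 2m$ gives at most $2m|S|^2$ separated triples, whereas at each $x\in S$ the separated pairs fill the product of two $E_x$-classes, and the content of the argument becomes showing---via the openness and infinitude of the classes together with $\omega$-saturation---that for infinitely many $x$ these classes each capture a definite portion of $S$, which is impossible. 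Either way the crux is the same tameness input: in a t.t.t structure the definable family of $N$-class partitions cannot separate a fixed pair at only finitely many parameters while genuinely separating some pair at every parameter.
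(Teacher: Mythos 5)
Your reduction of the problem to showing that some definable set $S_{p,q}=\{x\in D:\neg E(x,p,q)\}$ is infinite is reasonable, and you correctly locate the crux: since $E$ only coarsens $\sim_x$, knowing $p\not\sim_x q$ gives nothing about $\neg E(x,p,q)$, so one must somehow control how the $E_x$-classes vary with $x$. But neither of your two proposed ways of closing this gap is an argument. The ``locally constant $N$-class pattern'' step is unsupported: Lemma \ref{lem:Let--be} and Proposition \ref{prop:bounded-connected} bound cardinalities of boundaries and numbers of components, but say nothing about the partition into $E_x$-classes being stable as $x$ moves, and there is no way to track ``the class containing $p$'' definably at this stage --- the definability of $\sim_x$ is Proposition \ref{prop:definable-relation}, which is proved \emph{from} this lemma, so any appeal to machinery of that kind is circular. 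Your fallback double count also fails quantitatively: the contradiction hypothesis gives at most $2m|S|^{2}$ separated triples, but if at each $x\in S$ one $E_x$-class swallows all but a bounded number $c$ of points of $S$, the number of separated pairs at $x$ is at most about $2c|S|$, so the total is $O(|S|^{2})$ and no contradiction arises; the estimate you defer (``each class captures a definite portion of $S$'') is exactly the assertion to be proved. Separately, your intermediate claim that $p\not\sim_{x}q$ for \emph{every} $x$ in the middle region $I_{pq}$ is false: in a $T$-shaped space (a line with a segment attached at $0$, with $p=-1$, $q=1$), every interior point $x$ of the attached segment lies in $I_{pq}$ and disconnects the space, yet leaves $p$ and $q$ in the same component, so such $x$ can perfectly well lie in $D$.

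The paper's proof supplies precisely the mechanism you are missing, and by a quite different route: instead of fixing one separated pair and varying $x$, it inductively constructs a sequence $(b_{n})$ in $D$ satisfying $a\sim_{b_{n}}b_{j}$ and $\neg E(b_{j},a,b_{n})$ for all $j<n$. At each step $b_{n+1}$ is chosen with $\neg E(b_{n},a,b_{n+1})$, and Lemma \ref{lem:disjoint-components} shows that $b_{n+1}$ lies in the same component of $M_{t}\backslash\{b_{j}\}$ as $b_{n}$ for every $j<n$; hypothesis 2 then gives $E(b_{j},b_{n},b_{n+1})$, and transitivity of $E_{b_{j}}$ converts $\neg E(b_{j},a,b_{n})$ into $\neg E(b_{j},a,b_{n+1})$. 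In other words, the implication $\sim\,\Rightarrow E$ is used only in the positive direction, to \emph{transfer} an already-established $E$-separation along the chain --- this is the device your proposal lacks. The set $\{x\in D:\neg E(x,a,b_{n})\}$ then contains $\{b_{1},\dots,b_{n-1}\}$, so it can be made arbitrarily large, and $\omega$-saturation yields a single $b\in D$ for which it is infinite. You should rebuild your proof around such a chain construction rather than around genericity or counting.
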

\begin{proof}
  Let $a\in D$. Without loss of generality, $D$ is connected. Otherwise,
  we'll look at the connected component containing $a$. We'll now show
  that there exists a point $b\in D$ such that for an infinite number
  of points $x\in D$ we have $\neg E(x,a,b)$. In order to do this,
  we'll inductively construct a sequence of points $(b_{1},b_{2},\dots)$
  in $D$ such that for each $n\in\mathbb{N}$ and each $1\leq j<n$,
  $a\sim_{b_{n}}b_{j}$ and $\neg E(b_{j},a,b_{n})$.

  For $n=1$, we can choose any $b_{1}\in D\backslash\{a\}$.

  Let's assume that we constructed the sequence up to $b_{n}$. Let
  $X_{1},\dots,X_{c(b_{n})}$ be the connected components of $M_{t}\backslash\{b_{n}\}$
  such that $a\in X_{2}$. We choose $b_{n+1}\in D$ to be some point
  such that $\neg E(b_{n},a,b_{n+1})$. By our assumptions on $E(x,a,b)$,
  $b_{n+1}\notin X_{2}$. So without loss of generality, in $b_{n+1}\in X_{1}$.
  Let $Y_{1},\dots,Y_{c(b_{n+1})}$ be the connected components of $M\backslash\{b_{n+1}\}$
  such that $b_{n}\in Y_{1}$. By lemma \ref{lem:disjoint-components},
  for all $1<j\leq c(b_{n+1})$, $Y_{j}\cap X_{2}=\emptyset$. By the
  inductive hypothesis, $b_{j}\in X_{2}$ for all $1\leq j<n$. This
  means that for all $1\leq j<n$, $b_{j}\in Y_{1}$. Similarly, $a\in Y_{1}$
  and we already know that $b_{n}\in Y_{1}$. Together we've shown that
  $a\sim_{b_{n+1}}b_{j}$ for all $1\leq j<n+1$.

  We'll now show that for all $1\leq j<n$, $\neg E(b_{j},a,b_{n+1})$.
  This will be enough because we already know that $\neg E(b_{n},a,b_{n+1})$.

  Let $j$ be an index such that $1\leq j<n$. Let $X_{1},\dots,X_{c(b_{j})}$
  be the connected components of $M_{t}\backslash\{b_{j}\}$ such that
  $a\in X_{2}$ and $b_{n}\in X_{1}$. In addition, let $Y_{1},\dots,Y_{c(b_{n})}$
  be the connected components of $M_{t}\backslash\{b_{n}\}$ such that
  $b_{j},a\in Y_{1}$ and $b_{n+1}\in Y_{2}$. Since $b_{j}\in Y_{1}$
  and $b_{n}\in X_{1}$, by lemma \ref{lem:disjoint-components} it
  follows that $Y_{2}\subset X_{1}$ which means that $b_{n}\sim_{b_{j}}b_{n+1}$.
  By our assumptions on $E$ this implies $E(b_{j},b_{n},b_{n+1})$.
  Therefore, as $\neg E(b_{j},a,b_{n})$, we can conclude that $\neg E(b_{j},a,b_{n+1})$. 

  Now, by the $\omega$-saturation, there exists a point $b\in D$ such that
  $\vert\{x\in D:\neg E(x,a,b)\}\vert=\infty$. \end{proof}
\begin{lem}
  Let $M$ be a 1-dimensional $\omega$-saturated connected t.t.t structure.
  Let $D\subset M_{t}$ be an open definable subset, $E(x,a,b)\subset M_{t}^{3}$
  a definable relation and $N\in\mathbb{N}$ such that:
  \begin{enumerate}
  \item For every $x\in D$ and $a,b\in M_{t}$, $a\sim_{x}b\Rightarrow E(x,a,b)$.
  \item For every $x\in D$, $E(x,a,b)$ is an equivalence relation with $N$
    classes.
  \end{enumerate}
  Then $N\leq2$. \label{lem:not-all-connected-k}\end{lem}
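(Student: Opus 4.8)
The plan is to assume $N\geq 3$ and derive a contradiction, which gives $N\leq 2$. First I would apply Lemma \ref{lem:infinite-sep} (legitimate since $N\geq 3\geq 2$) to $E$ on $D$ with an arbitrary base point $a\in D$, obtaining $b\in D$ for which $X=\{x\in D:\neg E(x,a,b)\}$ is infinite. By Lemma \ref{lem:Let--be} an infinite definable set has nonempty interior, so I fix an open connected definable $I\subseteq X$. Since hypothesis (1) says $\sim_{x}$ refines $E_{x}$, for every $x\in I$ we then have $a\not\sim_{x}b$; morally, $I$ lies between $a$ and $b$. Note also that, as the $N$ nonempty $E_{x}$-classes are unions of connected components of $M_{t}\backslash\{x\}$, every point of $D$ leaves at least $N\geq 2$ components, so Lemma \ref{lem:disjoint-components} is available at points of $I$.

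The geometric heart is a betweenness observation. Given distinct $y_{1},y_{2}\in I$, I would apply Lemma \ref{lem:disjoint-components} with $C=M_{t}$ to split $M_{t}$ into the far side $L=\cup_{i\geq2}A_{i}$ of $y_{1}$, the middle $A_{1}\cap B_{1}$, and the far side $R=\cup_{j\geq2}B_{j}$ of $y_{2}$, where in the notation of that lemma $L\subseteq B_{1}$ and $R\subseteq A_{1}$. I claim any pair $u,v$ with $u\notin\{y_{1},y_{2}\}$, $v\notin\{y_{1},y_{2}\}$, $u\not\sim_{y_{1}}v$ and $u\not\sim_{y_{2}}v$ has one point in $L$ and the other in $R$: a point of the middle lies in the single component $A_{1}$ and also in $B_{1}$, so pairing it with anything forces $\sim_{y_{1}}$ or $\sim_{y_{2}}$; and two points both in $L$ lie in $B_{1}$, while two both in $R$ lie in $A_{1}$, again forcing $\sim$.

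The main obstacle is manufacturing a third point: I need some $c\neq a,b$ together with two parameters at which $a,b,c$ are pairwise $E$-separated. To produce it I would pass to the merged relation $E'$ on $I$ obtained from $E$ by fusing the $E_{x}$-classes of $a$ and $b$ into a single block. Since $a,b$ are $E_{x}$-inequivalent on $I$, the relation $E'$ is definable with parameters $a,b$, still coarsens $\sim_{x}$, and has exactly $N-1\geq 2$ classes for each $x\in I$, so Lemma \ref{lem:infinite-sep} applies to $(I,E',N-1)$. Choosing a base point $p\in I$ produces $q\in I$ with $\{x\in I:\neg E'(x,p,q)\}$ infinite; since two points in distinct $E'$-blocks cannot both lie in the fused block, for each such $x$ at least one of $p,q$ avoids the fused block, and a pigeonhole yields a single $c\in\{p,q\}$ with $\neg E(x,a,c)\wedge\neg E(x,b,c)$ for infinitely many $x\in I$. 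Consequently $W=\{x\in I:\neg E(x,a,b)\wedge\neg E(x,a,c)\wedge\neg E(x,b,c)\}$ is infinite. I expect this merged-relation step to be the delicate point, since it is what converts the bare hypothesis $N\geq 3$ into an honest third separated point to which the two-point betweenness analysis can be applied.

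To conclude I would pick $y_{1},y_{2}\in W\backslash\{a,b,c\}$ (possible as $W$ is infinite) and apply the betweenness claim to all three pairs $(a,b)$, $(a,c)$, $(b,c)$, each of which is $\sim$-separated at both $y_{1}$ and $y_{2}$. Each pair is therefore split between $L$ and $R$; but $a,b,c$ are three points all lying in $L\cup R$, so by pigeonhole two of them share a side, contradicting that every pair is split. This contradiction forces $N\leq 2$.
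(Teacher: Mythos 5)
Your proof is correct, and after the first application of Lemma \ref{lem:infinite-sep} it takes a genuinely different route from the paper's. The paper never manufactures a third point: having fixed $a,b$ and the infinite set $X=\{x\in D \mid \neg E(x,a,b)\}$, it applies Lemma \ref{lem:disjoint-components} to pairs $x,y\in X$ to show that the components of $M_t\backslash\{x\}$ other than the two containing $a$ and $b$ are disjoint both from $X$ and from the analogous ``far'' components at every other $y\in X$; since $N>2$ gives each $x\in X$ at least one $E_x$-class avoiding both $a$ and $b$, this yields infinitely many pairwise disjoint nonempty definable clopen subsets of the definable set $M_t\backslash X$, contradicting t.t.t-ness directly. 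You instead spend $N\geq 3$ on a second application of Lemma \ref{lem:infinite-sep}, applied to the merged relation $E'$, to extract an honest third separated point $c$. That step is sound: $E'$ is definable (its parameters $a,b$ are harmless, since the $\omega$-saturation used in the proof of Lemma \ref{lem:infinite-sep} tolerates finitely many parameters), it coarsens $\sim$, and it has exactly $N-1\geq 2$ classes on $I$ precisely because $\neg E(x,a,b)$ holds there. Your endgame is then a finite pigeonhole: three points pairwise split between the two sides $L$ and $R$ attached to $y_1,y_2\in W$ is impossible. So the paper's contradiction is infinitary (a definable set with infinitely many clopen pieces), while yours is combinatorial and needs only two elements of the infinite set, at the price of the merged-relation construction and a second pass through Lemma \ref{lem:infinite-sep}. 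One small imprecision: if the $E_x$-class of $x$ is the singleton $\{x\}$, it is not a union of components of $M_t\backslash\{x\}$, so your count gives $N-1$ rather than $N$ components; since $N\geq 3$ this is still the bound $\geq 2$ that Lemma \ref{lem:disjoint-components} requires, so nothing breaks.
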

\begin{proof}
  Assume for contradiction that $N>2$. For ease of notion, we define
  $c(x)=d_{M}(M_{t}\backslash\{x\})$ for each $x\in M_{t}$. We note
  that for all $x\in D$, $c(x)>2$.

  By lemma \ref{lem:infinite-sep}, there exist points $a,b\in M_{t}$
  such that for an infinite number of points $x\in D$, $\neg E(x,a,b)$.

  We denote the infinite set $\{x\in D:\neg E(x,a,b)\}$ by $X$.

  Let $x,y\in X$, let $X_{1},\dots,X_{c(x)}$ be the connected components
  of $M_{t}\backslash\{x\}$ such that $a\in X_{1}$ and $b\in X_{2}$
  and let $Y_{1},\dots,Y_{c(y)}$ be the connected components of $M_{t}\backslash\{y\}$
  such that $a\in Y_{1}$ and $b\in Y_{2}$. 

  First we note that for every $j$ such that $3\leq j\leq c(x)$,
  $y\notin X_{j}$. Because let $j$ be an index such that $3\leq j\leq n(x)$,
  let $k$ be an index such that $x\in Y_{k}$ and assume for contradiction
  that $y\in X_{j}$. Then, since $y\in X_{j}$ and $x\in Y_{k}$, it
  follows from lemma \ref{lem:disjoint-components} that $X_{1},X_{2}\subset Y_{k}$
  which means that $a,b\in Y_{k}$. However, this contradicts our assumption
  that $a\nsim_{y}b$.

  In an analogous fashion, for each index $j$ such that $3\leq j\leq c(y)$
  we have $x\notin Y_{j}$. 

  Therefore, $x\in Y_{1}\cup Y_{2}$ and $y\in X_{1}\cup X_{2}$.

  Since this is true for any pair of points $x,y\in X$, by lemma \ref{lem:disjoint-components}
  we get that:
  \begin{enumerate}
  \item for all $3\leq i\leq c(x)$ and $3\leq j\leq c(y)$ , $X_{i}\cap Y_{j}=\emptyset$
  \item for all $3\leq i\leq c(x)$, $X_{i}\cap X=\emptyset$.
  \end{enumerate}
  From these two results we'll show that that $M_{t}\backslash X$ has
  an infinite number of definable connected components. First of all,
  by $2$ it follows that for each point $x\in X$, the classes of $E(x,a,b)$
  not containing $a$ and $b$ are definable sets which are contained
  and clopen in $M_{t}\backslash X$. Furthermore, by $1$, all the
  sets obtained this way are disjoint. 

  But since $X$ is both infinite and definable, this is a contradiction
  to the fact that $M$ is t.t.t.
\end{proof}
Now, let $p$ be some type in $S(\emptyset)$. We now show that there
exist a $\emptyset$-definable relation $R_{p}(x,a,b)\subset M_{t}^{3}$
and an infinite $\emptyset$-definable set $D_{p}\subset M_{t}$ such
that:
\begin{enumerate}
\item For all elements $x\models p$ and points $a,b\in M_{t}$, $R_{p}(x,a,b)\iff a\sim_{x}b$.
\item For all elements $x\in D_{p}$ and points $a,b\in M_{t}$, $a\sim_{x}b\Rightarrow R_{p}(x,a,b)$.
\item For all elements $x\in D_{p}$, $R_{p}(x,a,b)\subset M_{t}^{2}$ is
  an equivalence relation with $d_{M}(M_{t}\backslash\{y\})$ equivalence
  classes where $y$ is some element realizing $p$.
\item For every element $x$ that realizes $p$, $x\in D_{p}$. 
\end{enumerate}
We construct $R_{p}$ and $D_{p}$ in the following way. First, let
$x$ be some realization of $p$ and define $N$ by $N=d_{M}(M_{t}\backslash\{x\})$.
Then there exist $\phi_{1}(x,\bar{y}),\dots,\phi_{N}(x,\bar{y})$
such that:

\medskip{}

({*}) for some $\bar{y}$, $\phi_{1}^{M_{t}}[\bar{y}],\dots,\phi_{N}^{M_{t}}[\bar{y}]$
partition $M_{t}\backslash\{x\}$ into $N$ disjoint clopen sets.
Furthermore, for any other $\bar{z}$, if $(\phi_{1}^{M_{t}}[\bar{z}],\allowbreak \dots,\allowbreak \phi_{N}^{M_{t}}[\bar{z}])$
is a partition of $M_{t}\backslash\{x\}$ into disjoint clopen sets
then it's the same partition as $(\phi_{1}^{M_{t}}[\bar{y}],\allowbreak \dots,\allowbreak \phi_{N}^{M_{t}}[\bar{y}])$.

\medskip{}

Since this is a first order statement, ({*}) holds for all $x\models p$.

Now, we define $D_{p}$ as the set of all the points $x\in M_{t}$
such that ({*}) holds for $x$ with the formulas $\phi_{1}(x,\bar{y}),\dots,\phi_{N}(x,\bar{y})$.
We then define $R_{p}(x,a,b)$ as a relation which is true iff for
one of the points $\bar{y}$ guaranteed by ({*}) for $x$, the sets
$\phi_{1}^{M_{t}}[\bar{y}],\dots,\phi_{N}^{M_{t}}[\bar{y}]$ partition
$M_{t}\backslash\{x\}$ into $N$ disjoint clopen sets such that $a$
and $b$ are in the same section of the partition.
\begin{prop}
  Let $M$ be a 1-dimensional $\omega$-saturated connected t.t.t structure
  and let $x\in M_{t}$ be a point such that $d_{M}(M_{t}\backslash\{x\})>2$.
  Then $D_{tp(x/\emptyset)}$ is finite and in particular, $x\in acl(\emptyset)$.\label{prop:bigger-2-acl}\end{prop}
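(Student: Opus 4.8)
The plan is to argue by contradiction: I would assume that $D_p$ is infinite and derive a violation of Lemma \ref{lem:not-all-connected-k}. Write $N=d_{M}(M_{t}\backslash\{x\})$, so that $N>2$ by hypothesis, and note that $N$ is the same for every realization of $p=tp(x/\emptyset)$ since $d_{M}(M_{t}\backslash\{\cdot\})$ is constant on $p$. The set $D_{p}$ is $\emptyset$-definable, so if it were infinite then by part $3$ of Lemma \ref{lem:Let--be} it would contain a nonempty definably connected open set $D$. This $D$ is exactly the kind of open domain on which one can test the separating relation $R_{p}$.

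On $D\subseteq D_{p}$ the definable relation $E(x,a,b):=R_{p}(x,a,b)$ satisfies both hypotheses of Lemma \ref{lem:not-all-connected-k}: by property $2$ of the construction, $a\sim_{x}b\Rightarrow R_{p}(x,a,b)$ for every $x\in D_{p}$; and by property $3$, for every $x\in D_{p}$ the relation $R_{p}(x,\cdot,\cdot)$ is an equivalence relation with exactly $d_{M}(M_{t}\backslash\{y\})=N$ classes, where $y\models p$. Thus the triple $(D,R_{p},N)$ meets the conditions of Lemma \ref{lem:not-all-connected-k}, whose conclusion is that $N\leq 2$. This contradicts $N>2$, so $D_{p}$ must be finite.

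Finally, since $x$ realizes $p$, property $4$ gives $x\in D_{p}$; as $D_{p}$ is a finite $\emptyset$-definable set, each of its elements, and in particular $x$, lies in $acl(\emptyset)$. The only genuine content of the argument is already packaged in Lemma \ref{lem:not-all-connected-k}, which forbids a definable equivalence relation with more than two classes refining $\sim_{x}$ on an open set; the present proof is just the observation that an infinite $D_{p}$ would manufacture precisely such a relation. The one step requiring a little care is the passage from the possibly disconnected definable set $D_{p}$ to an honest open domain $D$, which is exactly where Lemma \ref{lem:Let--be}(3) is invoked — but this is routine, so I do not expect a substantial obstacle here.
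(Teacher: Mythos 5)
Your proof is correct and is essentially the paper's own argument: the paper likewise derives the contradiction by applying Lemma \ref{lem:not-all-connected-k} with $E=R_{p}$ and $N>2$, taking $D=int(D_{p})$ directly (which is nonempty precisely when $D_{p}$ is infinite) instead of extracting a connected open subset via Lemma \ref{lem:Let--be}(3). The difference is purely one of bookkeeping, and if anything your explicit passage to a nonempty open set makes the implicit nonvacuousness hypothesis of Lemma \ref{lem:not-all-connected-k} cleaner.
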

\begin{proof}
  Let $N=d_{M}(M_{t}\backslash\{x\})$ and $p=tp(x/\emptyset)$. Since
  $N>2$, by applying lemma \ref{lem:not-all-connected-k} with $D=int(D_{p})$
  and $E=R_{p}$, $int(D_{p})$ is finite. Therefore, $D_{p}$ is finite.
\end{proof}
We now look at what happens if $d_{M}(M_{t}\backslash\{x\})=2$.

As before, let $p\in S(\emptyset)$ be a type such that for some element
$x$ realizing $p$ we have $d_{M}(M_{t}\backslash\{x\})=2$. We define
$\tilde{D}_{p}\subset D_{p}$ as the set of points $x\in D_{p}$ such
that there exist elements $a,b\in M_{t}$ and a basis set $U\subset M_{t}$
containing $x$ such that for all points $u\in U$, $\neg R_{p}(u,a,b)$.
\begin{prop}
  Let $M$ be a 1-dimensional $\omega$-saturated connected t.t.t structure
  and let $p\in S_{1}(\emptyset)$ be a complete type in $M_{t}$. In
  addition, assume that for some (all) elements $x\models p$, $\sim_{x}$
  has $2$ equivalence classes. Then, for each point $x$ realizing
  $p$, one of the following hold:\label{prop:equals-2}
  \begin{enumerate}
  \item There exists a finite $\emptyset$-definable subset of $D_{p}$ containing
    $x$ and in particular, $x\in acl(\emptyset)$.
  \item $int(\tilde{D}_{p})$ is a set containing $x$ such that for every
    point $y\in int(\tilde{D}_{p})$, $d_{M}(M_{t}\backslash\{y\})=2$.
  \end{enumerate}
\end{prop}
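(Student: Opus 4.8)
The plan is to make $\tilde D_p$ carry the whole argument. Since $R_p$ and $D_p$ are $\emptyset$-definable, so are $\tilde D_p$, its interior $int(\tilde D_p)$ and its boundary $bd(\tilde D_p)$; by Lemma \ref{lem:Let--be}(4) the last of these is finite. Consequently membership in $int(\tilde D_p)$ is decided by $tp(x/\emptyset)=p$, so it suffices to prove two structural facts: (i) every point of $int(\tilde D_p)$ has exactly two components, and (ii) every realization of $p$ that avoids a certain finite $\emptyset$-definable exceptional set lies in $\tilde D_p$. Granting these, fix $x\models p$ and let $Z\subseteq D_p$ be the $\emptyset$-definable set formed from $bd(\tilde D_p)\cap D_p$, the non-interior points of $D_p$, and the points of $D_p$ with more than two components; $Z$ is finite by Lemma \ref{lem:Let--be}(4) (for $bd(\tilde D_p)$ and $bd(D_p)$) and by Proposition \ref{prop:bigger-2-acl} together with the fact that a definable set contained in $acl(\emptyset)$ is finite. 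If $x\in int(\tilde D_p)$, alternative (2) holds by (i). Otherwise, either $x\in Z$, or $x$ avoids the exceptional set of (ii), in which case $x\in\tilde D_p$ and hence $x\in\tilde D_p\setminus int(\tilde D_p)\subseteq bd(\tilde D_p)\cap D_p\subseteq Z$; in both sub-cases $x\in Z$ and alternative (1) holds.

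For (i), let $y\in int(\tilde D_p)$ and take a basis neighbourhood $W\ni y$ with $W\subseteq\tilde D_p\subseteq D_p$, together with witnesses $a,b$ and a basis set $U\ni y$ such that $\neg R_p(u,a,b)$ for all $u\in U$. As $y\in D_p$, the implication $a\sim_y b\Rightarrow R_p(y,a,b)$ gives $a\nsim_y b$, so $a,b$ lie in distinct components of $M_t\setminus\{y\}$. If there were a third component $E$, then $y\in bd(E)$ because $M_t$ is connected, so I could pick $u\in(U\cap W)\cap E$; this $u$ lies in $D_p$, and applying Lemma \ref{lem:disjoint-components} to the pair $y,u$ forces the components containing $a$ and $b$ into the single component of $M_t\setminus\{u\}$ containing $y$, so $a\sim_u b$ and hence $R_p(u,a,b)$, contradicting the choice of $U$. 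Thus $d_M(M_t\setminus\{y\})=2$.

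Fact (ii) is the core. The difficulty is that on $D_p$ the relation $R_p$ only refines $\sim_u$ in one direction, so I would first pass to a neighbourhood where the two coincide. The set of $u\in D_p$ for which some $R_p$-class is disconnected is $\emptyset$-definable and, by Proposition \ref{prop:bigger-2-acl}, consists of points with more than two components, hence of points of $acl(\emptyset)$; being contained in $acl(\emptyset)$ it is finite. Discarding it together with the non-interior points of $D_p$, I may assume $x$ lies in an open set $U_G\subseteq D_p$ all of whose points have exactly two components, so that for $u\in U_G$ the relation $R_p(u,\cdot,\cdot)$ coincides with $\sim_u$ and $\neg R_p(u,a,b)\iff a\nsim_u b$. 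Writing $C_1,C_2$ for the two components of $M_t\setminus\{x\}$, I would choose two-component points $a\in U_G\cap C_1$ and $b\in U_G\cap C_2$ (possible since $x\in bd(C_1)\cap bd(C_2)$), and then shrink to a basis set $U\ni x$ contained in $U_G$ and in the two components of $M_t\setminus\{a\}$ and $M_t\setminus\{b\}$ that contain $x$. For $u\in U$, say $u\in C_1$, Lemma \ref{lem:disjoint-components} applied to $x,u$ places $b$ together with all of $C_2$ in the component of $M_t\setminus\{u\}$ containing $x$, while the relation $u\sim_a x$ forces $a$ into a different component; hence $a\nsim_u b$, so $\neg R_p(u,a,b)$ for every $u\in U$, and therefore $x\in\tilde D_p$.

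The hard part is the last sentence: converting the qualitative picture ``$u$ sits between $x$ and $a$, so deleting $u$ separates $a$ from $b$'' into a uniform statement valid for all $u$ in one neighbourhood. The crux is a betweenness lemma stating that a point lying in the between-region of two two-component points separates them, which is precisely where the two-component hypothesis is used (it makes the far-side regions of Lemma \ref{lem:disjoint-components} single components with connected closures) and where the three regions of that lemma must be fed into one another. Once this betweenness bookkeeping is secured, facts (i) and (ii) assemble into the stated dichotomy.
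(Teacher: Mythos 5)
Your part (i) is sound and is essentially the paper's own final step: given $y\in int(\tilde{D}_{p})$ with witnesses $a,b,U$, a point $u$ of $U\cap\tilde{D}_{p}$ lying in a hypothetical third component forces $a\sim_{u}b$ via Lemma \ref{lem:disjoint-components}, contradicting $\neg R_{p}(u,a,b)$. The gap is in part (ii), and it is twofold. First, the exceptional set you discard --- the set of $u\in D_{p}$ for which some $R_{p}$-class is disconnected --- is not known to be $\emptyset$-definable at this stage: ``disconnected'' quantifies over definable partitions, and the definability of $\sim_{x}$ (Proposition \ref{prop:definable-relation}) is proved only later and depends on this very proposition, so invoking anything of that sort here would be circular. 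Second, and more seriously, your final step rests on the claim that if $u\sim_{x}a$ and $u\sim_{a}x$ (i.e.\ $u$ lies in the region $I(x,a)$ between $x$ and $a$) then $a\nsim_{u}x$. You flag this ``betweenness lemma'' as the crux but do not prove it, and it is not a consequence of Lemma \ref{lem:disjoint-components}: that lemma places the far sides of $x$ and of $a$ inside single components of the other point's complement, but it never forces a third point to separate the other two. Indeed the claim is false for general Hausdorff spaces with the two-component property: take a connected central set $J$ with three disjoint legs attached at three distinct boundary points $x,a,u$ of $J$; each of $x,a,u$ then has exactly two complementary components (its own leg and everything else), $u\in I(x,a)$, and yet $x\sim_{u}a$. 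Ruling such configurations out in a t.t.t structure is essentially the content of the proposition (and of the later local-flatness results), so deferring it to an unproven lemma leaves the proof incomplete.

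The paper avoids both problems by a dichotomy that never has to manufacture witnesses for a given realization. Let $(*)$ be the property ``for all $a,b\in M_{t}$ and every basis set $U$ containing $x$ there exists $u\in U$ with $R_{p}(u,a,b)$''; this is first order because $R_{p}$, $D_{p}$ and the basis are uniformly $\emptyset$-definable. Either every realization of $p$ satisfies $(*)$, in which case the $\emptyset$-definable set $C$ of points of $D_{p}$ satisfying $(*)$ must be finite --- otherwise $int(C)$ is an infinite open definable set to which Lemma \ref{lem:infinite-sep} applies, producing $a,b$ and a basis set $U\subset int(C)$ with $\neg R_{p}(u,a,b)$ for all $u\in U$, contradicting $(*)$ --- and case 1 holds; or some realization fails $(*)$, i.e.\ lies in $\tilde{D}_{p}$, and then by completeness of $p$ and $\emptyset$-definability of $\tilde{D}_{p}$ every realization lies in $\tilde{D}_{p}$, after which finiteness of $\tilde{D}_{p}\backslash int(\tilde{D}_{p})$ and your argument (i) finish the proof. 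The witnesses $a,b,U$ thus come for free from the failure of $(*)$; that is the idea your proposal is missing, and Lemma \ref{lem:infinite-sep} --- which your proof never uses --- is what carries the topological weight.
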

\begin{proof}
  First of all, if $D_{p}$ is finite then the first case holds for
  all $x\models p$.

  Let's assume that $D_{p}$ is infinite. Now, suppose that for each
  point $x$ realizing $p$:

  \medskip{}

  ({*}) for all $a,b\in M_{t}$ and for every basis set $U$ containing
  $x$, there exists a point $u\in U$ such that $R(u,a,b)$.

  \medskip{}

  We define the set $C\subset D_{p}$ as the set of points in $D_{p}$
  with property ({*}). $C$ is clearly $\emptyset$-definable. Furthermore,
  for all $x\models p$, $x\in C$. Assume for contradiction that $C$
  is infinite. In that case, by lemma \ref{lem:infinite-sep} there
  exist points $a,b\in M_{t}$ and a basis set $U\subset C$ such that
  for all $u\in U$, $\neg R(u,a,b)$. This is clearly a contradiction
  to ({*}). This means that $C$ is finite so again we're in the first
  case for every point $x\models p$.

  Therefore, we can assume that for all elements $x$ realizing $p$,
  $x\in\tilde{D}_{p}$. 

  If $\tilde{D}_{p}$ is finite then again we're in the first case for
  every point $x\models p$. 

  We'll now see that if $\tilde{D}_{p}$ is infinite then for each point
  $y\in int(\tilde{D}_{p})$, $d_{M}(M_{t}\backslash\{y\})\leq2$. This
  will finish the proposition because we already know that for every
  point $x\in\tilde{D}_{p}$, $d_{M}(M_{t}\backslash\{x\})\geq2$. We
  also note that if $x\in\tilde{D_{p}}\backslash int(\tilde{D}_{p})$
  then clearly we're in the first case as $\vert\tilde{D}_{p}\backslash int(\tilde{D}_{p})\vert<\infty$. 

  Let's assume for contradiction that $y\in int(\tilde{D}_{p})$ and
  $d_{M}(M_{t}\backslash\{y\})>2$. Since $y\in\tilde{D}_{p}$, there
  exist points $a,b\in M_{t}$ and a basis set $U\subset\tilde{D}_{p}$
  containing $y$ such that for all $u\in U$, $\neg R_{p}(u,a,b)$.
  Let $Y_{1},Y_{2},Y_{3}$ be three definable disjoint clopen sets partitioning
  $M_{t}\backslash\{y\}$ such that $a\in Y_{1}$ and $b\in Y_{2}$.
  Since $M_{t}$ is connected, there exists a $z$ such that $z\in Y_{3}\cap U$.
  Since $z\in D_{p}$, $k=d_{M}(M_{t}\backslash\{z\})\geq2$. Let $Z_{1},\dots,Z_{k}$
  be definable pairwise disjoint clopen sets partitioning $M_{t}\backslash\{z\}$
  such that $y\in Z_{1}$. Since $y\in Z_{1}$ and $z\in Y_{3}$, by
  lemma \ref{lem:disjoint-components} it follows that $Y_{1}\cup Y_{2}\subset Z_{1}$
  which means that $a,b\in Z_{1}$. However, this is a contradiction
  to the fact that $\neg R_{p}(z,a,b)$. 
\end{proof}
We now use the previous two propositions to show that if for each
point $x\in M_{t}$ we have $d_{M}(M_{t}\backslash\{x\})>1$, then
the relation $a\sim_{x}b\subset M_{t}^{3}$ is $\emptyset$-definable.
\begin{prop}
  Let $M$ be a 1-dimensional connected $\omega$-saturated t.t.t structure such
  that for each point $x\in M_{t}$, $d_{M}(M_{t}\backslash\{x\})>1$.
  Then the relation $a\sim_{x}b\subset M_{t}^{3}$ is definable.\label{prop:definable-relation}\end{prop}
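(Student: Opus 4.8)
The plan is to define $\sim$ by splitting $M_{t}$ into finitely many definable pieces according to the type of $x$ over $\emptyset$, and on each piece to compute $\sim_{x}$ by the appropriate relation $R_{p}$ constructed above. The two preceding propositions already do most of the work. Proposition \ref{prop:bigger-2-acl} tells us that every $x$ with $d_{M}(M_{t}\backslash\{x\})>2$ lies in $acl(\emptyset)$, and Proposition \ref{prop:equals-2} tells us that for a type $p$ whose realizations have exactly two components, either these realizations are algebraic (case 1), or $p$ concentrates on the $\emptyset$-definable open set $int(\tilde{D}_{p})$, every point of which again has exactly two components (case 2). Since by hypothesis every point satisfies $d_{M}(M_{t}\backslash\{x\})\geq2$, these alternatives are exhaustive.

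The first key step is to verify that $R_{p}$ genuinely computes $\sim$ on $int(\tilde{D}_{p})$ in the case-2 situation. For $y\in int(\tilde{D}_{p})$ we have $d_{M}(M_{t}\backslash\{y\})=2$, so $\sim_{y}$ partitions $M_{t}\backslash\{y\}$ into exactly two classes. On the other hand, since $p$ is a case-2 type its realizations also have two components, so by property (3) the relation $R_{p}(y,\cdot,\cdot)$ is an equivalence relation with two classes, while property (2) says that $\sim_{y}$ refines $R_{p}(y,\cdot,\cdot)$. A partition into two nonempty classes that refines another two-class partition of the same set must coincide with it, so $R_{p}(y,a,b)\iff a\sim_{y}b$ for all $y\in int(\tilde{D}_{p})$ and all $a,b$. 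Thus on each case-2 set $int(\tilde{D}_{p})$ the definable relation $R_{p}$ is exactly $\sim$.

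The second step is a compactness reduction over the type space $S_{1}(\emptyset)$. Every non-algebraic type $p$ must be a case-2 type, since both the $>2$ situation and case 1 force algebraicity, and then the formula defining $int(\tilde{D}_{p})$ belongs to $p$; every algebraic type is isolated by a formula $\chi$ with finite solution set. Together these formulas give an open cover of the compact space $S_{1}(\emptyset)$, from which we extract a finite subcover coming from sets $int(\tilde{D}_{p_{1}}),\dots,int(\tilde{D}_{p_{m}})$ and isolating formulas $\chi_{q_{1}},\dots,\chi_{q_{r}}$. Their realizations cover $M_{t}$, so $F=M_{t}\backslash\bigcup_{i=1}^{m}int(\tilde{D}_{p_{i}})$ is contained in $\bigcup_{j}\chi_{q_{j}}^{M_{t}}$ and is therefore a finite definable set. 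On $F$ we invoke property (1): for $x$ realizing one of the finitely many (algebraic, hence isolated) types $q_{j}$, we have $R_{q_{j}}(x,a,b)\iff a\sim_{x}b$. Assembling the pieces, we define
\[
a\sim_{x}b\ \Longleftrightarrow\ \bigvee_{i=1}^{m}\big(x\in int(\tilde{D}_{p_{i}})\wedge R_{p_{i}}(x,a,b)\big)\ \vee\ \bigvee_{j=1}^{r}\big(\chi_{q_{j}}(x)\wedge R_{q_{j}}(x,a,b)\big),
\]
which is a first order formula; the disjuncts agree on overlaps since each equals $\sim_{x}$ wherever it applies.

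I expect the main obstacle to be the reduction from the a priori infinite family of sets $int(\tilde{D}_{p})$ to a finite cover, that is, making the compactness argument over $S_{1}(\emptyset)$ precise and correctly isolating the finite algebraic remainder $F$ on which $R_{tp(x)}$ must be used instead. By contrast, the verification that $R_{p}$ coincides with $\sim$ on $int(\tilde{D}_{p})$, though essential, reduces to the short observation that a two-class partition refining a two-class partition is equal to it, once the groundwork laid by Propositions \ref{prop:bigger-2-acl} and \ref{prop:equals-2} is in place.
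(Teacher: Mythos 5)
Your proof is correct, and it is built from the same core machinery as the paper's — the relations $R_{p}$, the sets $D_{p}$ and $int(\tilde{D}_{p})$, and Propositions \ref{prop:bigger-2-acl} and \ref{prop:equals-2} — but the final assembly is genuinely different. The paper shows that both $\sim$ and its complement are $\bigvee$-definable over $\emptyset$: for $\nsim_{x}$ it uses the characterization ``there exist two definable open sets with boundary $\{x\}$, one containing $a$ and one containing $b$'', while for $\sim$ it constructs, for each point $x$, an $\emptyset$-definable set $C_{x}$ satisfying only a one-sided inclusion $C_{x}\subseteq\,\sim$ globally together with exactness at $x$ itself; definability then follows by compactness/saturation applied to the two infinite disjunctions. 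You instead upgrade the local statement to a two-sided identity — $R_{p}$ \emph{equals} $\sim$ on all of $int(\tilde{D}_{p})$, via the observation that a two-class partition refining a two-class partition must coincide with it — and then apply compactness directly to $S_{1}(\emptyset)$, covering it by the clopen sets given by the formulas defining $int(\tilde{D}_{p})$ (non-algebraic types) and by isolating algebraic formulas (algebraic types). This buys you two simplifications: you never need the $\bigvee$-definability of $\nsim$ at all, and you treat every algebraic case (the situation $d_{M}(M_{t}\backslash\{x\})>2$ as well as case 1 of Proposition \ref{prop:equals-2}) uniformly through isolation, avoiding the paper's more delicate construction $C_{x}=((D_{p}\backslash\bigcup_{i=1}^{l}D_{tp(y_{i}/\emptyset)})\times M_{t}^{2})\cap R_{p}$ for the many-component case. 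The price is the (standard) fact that an algebraic type is isolated by an algebraic formula of minimal cardinality, which you correctly invoke; and your refinement observation is in any case exactly what is needed, but left implicit, to justify property (1) for the paper's case-2 set $C_{x}$. Both routes are compactness arguments in the end, but yours produces an explicit finite defining formula and is, in my view, the cleaner of the two.
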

\begin{proof}
  We'll show that both $a\sim_{x}b$ and $a\nsim_{x}b$ are $\bigvee$-definable
  by formulas without parameters.

  $a\nsim_{x}b$ is clearly $\bigvee$-definable by formulas without
  parameters because $a\nsim_{x}b$ iff there exist two open sets whose
  boundary is $\{x\}$ such that one contains $a$ and the other contains
  $b$.

  We'll now prove that $a\sim_{x}b$ is $\bigvee$-definable by formulas
  without parameters. This is done by showing that for each point $x\in M_{t}$,
  there exists a set $C_{x}\subset M_{t}^{3}$ which is definable without
  parameters such that:
  \begin{enumerate}
  \item For all points $y,a,b\in M_{t}$, $(y,a,b)\in C_{x}\Rightarrow a\sim_{y}b$
  \item $(x,a,b)\in C_{x}\iff a\sim_{x}b$
  \end{enumerate}
  Let's choose some $x\in M_{t}$ and define $p=tp(x/\emptyset)$.

  If $d_{M}(M_{t}\backslash\{x\})=N>2$ then, by proposition \ref{prop:bigger-2-acl},
  $D_{p}$ is a finite $\emptyset$-definable set containing $x$. Furthermore,
  for every $y\in D_{p}$, $d_{M}(M_{t}\backslash\{x\})\geq N$. Let's
  denote the points in $D_{p}$ by $D_{p}=\{y_{1},\dots,y_{k}\}$. Without
  loss of generality, there exists some $0\leq l<k$ such that for all
  $1\leq i\leq l$, $d_{M}(M_{t}\backslash\{y_{i}\})>N$ and for all
  $l+1\leq i\leq k$, $d_{M}(M_{t}\backslash\{y_{i}\})=N$. It's easy
  to see that for each $1\leq i\leq l$, $x\notin D_{tp(y_{i}/\emptyset)}$.
  Therefore, we can define:\[
  C_{x}=((D_{p}\backslash\bigcup_{i=1}^{l}D_{tp(y_{i}/\emptyset)})\times M_{t}^{2})\cap R_{p}\]
  . Finally, let's assume that $d_{M}(M_{t}\backslash\{x\})=2$. 

  If $D_{p}$ contains a finite $\emptyset$-definable set containing
  $x$, then we can define $C_{x}$ in the same way as in the previous
  case. Otherwise, by proposition \ref{prop:equals-2}, $int(\tilde{D}_{p})\subset D_{p}$
  is a set containing $x$ such that for all $y\in int(\tilde{D}_{p})$,
  $d_{M}(M_{t}\backslash\{y\})=2$. Therefore, we can define:

  \[
  C_{x}=(int(\tilde{D}_{p})\times M_{t}^{2})\cap R_{p}\]
  . This finishes the proof of the proposition.\end{proof}
\begin{example}
  Let's look at the structure $R_{int}=\langle\mathbb{R},I(x,y,z)\rangle$
  where $I(x,y,z)$ is true if $z$ lies on the interval between $x$
  and $y$. In other words:\label{exa:interval-example}\[
  I(x,y,z)=\{(x,y,z)\in\mathbb{R}^{3}\vert(x<z<y)\vee(y<z<x)\}\]

  The basis sets will be given by:

  \[
  \{I^{R_{int}}(a,b,z)\vert a,b\in\mathbb{R}\}\]

  Since $\langle\mathbb{R},<\rangle$ is an $\omega$-saturated one dimensional
  t.t.t structure, so is $R_{int}$. We'll now see that for every point
  $a\in\mathbb{R}$, $\mathbb{R}\backslash\{a\}$ has two definably
  connected components in $R_{int}$.

  Let $a$ be some point in $\mathbb{R}$. Let $c$ and $b$ be two
  constants in $\mathbb{R}$ such that $c<a<b$. Then:\[
  x<a\iff I(x,a,c)\vee I(c,a,x)\]

  \[
  a<x\iff I(a,x,b)\vee I(a,b,x)\]
  this shows that $\mathbb{R}\backslash\{a\}$ has two definably connected
  components in $R_{int}$.

  Therefore, by proposition \ref{prop:definable-relation}, the relation
  $a\sim_{x}b$ is definable in $R_{int}$. Indeed:\[
  a\sim_{x}b\iff\neg I(a,b,x)\]

\end{example}

\subsection{Local and Global Flatness}

We'll now prove that, under the condition that removing any point
creates at least two connected components, there exist a finite number
of points such that after removing them, the remaining finite number
of connected components are o-minimal. This is done by first showing
that up to a finite number of points the structure is ''locally
o-minimal'', and then showing that local o-minimality implies global
o-minimality. In addition, the definability of the relation $a\sim_{x}b\subset M_{t}^{3}$
will play a crucial role. 

We start by defining a notion of {}``local flatness'' and then showing
that locally flat points have a neighborhood which behaves similarly
to an o-minimal one.
\begin{defn}
  Let $M$ be t.t.t structure. We say that the point $x\in M_{t}$ is \emph{locally
    flat} if there exist points $a,b\in M_{t}$ and a basis set $U$ such
  that for every point $u\in U$, $a\nsim_{u}b$. We say that a set $D$ is
  locally flat if all of it's points are locally flat.

  We first show that in the type of structures which we're currently
  interested in, all but a finite number of points are locally flat.\end{defn}
\begin{prop}
  Let $M$ be a 1-dimensional $\omega$-saturated connected t.t.t structure
  such that for each point $x\in M_{t}$, $d_{M}(M_{t}\backslash\{x\})>1$.
  Then for all but a finite number of points $x\in M_{t}$, $x$ is locally
  flat.\label{prop:finite-not-flat}\end{prop}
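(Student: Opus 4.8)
The plan is to show that the set $F\subseteq M_{t}$ of points that are \emph{not} locally flat is $\emptyset$-definable and entirely contained in $acl(\emptyset)$. Since an infinite $\emptyset$-definable set cannot be contained in $acl(\emptyset)$ in an $\omega$-saturated structure, this will force $F$ to be finite, which is exactly the claim. First I would record that $F$ is $\emptyset$-definable: by Proposition \ref{prop:definable-relation} the relation $a\sim_{x}b$ is definable without parameters, and local flatness of $x$ is the condition $\exists a,b\,\exists\bar{c}\,(x\in\phi^{M_{t}}(\bar{c})\wedge\forall u(u\in\phi^{M_{t}}(\bar{c})\rightarrow a\nsim_{u}b))$, which involves only $\sim$ and the basis formula $\phi$. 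Hence $F$ is defined over $\emptyset$.

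The conceptual heart of the argument is the observation that membership in $int(\tilde{D}_{p})$ already witnesses local flatness. Fix a type $p$ whose realizations $x$ satisfy $d_{M}(M_{t}\backslash\{x\})=2$, and suppose $x\in int(\tilde{D}_{p})$. On the one hand there is a basis set $V$ with $x\in V\subseteq\tilde{D}_{p}\subseteq D_{p}$; on the other hand, since $x\in\tilde{D}_{p}$, there are points $a,b$ and a basis set $U$ containing $x$ with $\neg R_{p}(u,a,b)$ for every $u\in U$. Choosing a basis set $W$ with $x\in W\subseteq U\cap V$ and recalling the defining property that $a\sim_{u}b\Rightarrow R_{p}(u,a,b)$ for every $u\in D_{p}$, we conclude $a\nsim_{u}b$ for all $u\in W$. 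Thus $x$ is locally flat. (This is precisely why one must pass to $int(\tilde{D}_{p})$ rather than $\tilde{D}_{p}$: the neighborhood on which $a\nsim_{u}b$ must hold has to lie inside $D_{p}$ so that the implication from $\neg R_{p}$ to $\nsim$ is available, and $V$ supplies this.)

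Now I would run the case analysis on $x\in F$. Write $N=d_{M}(M_{t}\backslash\{x\})$, so $N\geq2$ by hypothesis. If $N>2$, Proposition \ref{prop:bigger-2-acl} gives $x\in acl(\emptyset)$ directly. If $N=2$, then Proposition \ref{prop:equals-2} applies to $p=tp(x/\emptyset)$; its case (2) asserts $x\in int(\tilde{D}_{p})$, which by the previous paragraph would make $x$ locally flat, contradicting $x\in F$. Hence case (1) must hold, and it gives a finite $\emptyset$-definable set containing $x$, so again $x\in acl(\emptyset)$. Either way $x\in acl(\emptyset)$, so $F\subseteq acl(\emptyset)$.

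Finally I would close with the saturation step. If $F$ were infinite, consider the partial type over $\emptyset$ consisting of the formula defining $F$ together with $\neg\psi(x)$ for every $\emptyset$-formula $\psi$ having only finitely many realizations. Any finite fragment of this type excludes only finitely many algebraic points, so, $F$ being infinite, it is satisfied by some element of $F$; thus the type is finitely satisfiable and, by $\omega$-saturation, realized. Its realization lies in $F$ but not in $acl(\emptyset)$, contradicting $F\subseteq acl(\emptyset)$. Hence $F$ is finite, i.e. all but finitely many points are locally flat. I expect the only delicate point to be the matching of basis sets in the key observation of the second paragraph; the remainder is bookkeeping with the already-established Propositions \ref{prop:definable-relation}, \ref{prop:bigger-2-acl}, and \ref{prop:equals-2}.
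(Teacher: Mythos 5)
Your proof is correct, but it takes a genuinely different route from the paper's. The paper argues directly on the definable bad set: letting $X$ be the set of points that are not locally flat (definable by Proposition \ref{prop:definable-relation}), it assumes $X$ is infinite, applies Lemma \ref{lem:infinite-sep} to produce points $a,b$ for which $\{x\in X\,\vert\,a\nsim_{x}b\}$ is infinite, and then uses the fact that an infinite definable set contains a basis set $U$; every point of $U$ is then locally flat (witnessed by $a,b,U$), contradicting $U\subset X$. You instead prove the pointwise statement that every non-locally-flat point lies in $acl(\emptyset)$, by combining Proposition \ref{prop:bigger-2-acl} (for $d_{M}(M_{t}\backslash\{x\})>2$) with Proposition \ref{prop:equals-2} (for $d_{M}(M_{t}\backslash\{x\})=2$), and then convert algebraicity into finiteness by a saturation/compactness argument. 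The key new ingredient in your version is the observation that $x\in int(\tilde{D}_{p})$ already forces local flatness — shrinking the witnessing basis set into $D_{p}$ so that $\neg R_{p}(u,a,b)$ yields $a\nsim_{u}b$ — which is exactly why case (2) of Proposition \ref{prop:equals-2} is incompatible with $x$ failing to be locally flat; this link between $\tilde{D}_{p}$ and local flatness is implicit but never stated in the paper. What each approach buys: the paper's proof is shorter and self-contained at this point in the text, while yours yields the slightly stronger conclusion $F\subseteq acl(\emptyset)$ and sidesteps a detail the paper glosses over, namely that applying Lemma \ref{lem:infinite-sep} with $E={\sim}$ requires the number of classes to be constant on the open set in question (your route inherits this uniformity for free, since Propositions \ref{prop:bigger-2-acl} and \ref{prop:equals-2} work type by type, where the class number is fixed). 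The price you pay is the extra compactness step at the end, which the paper's direct contradiction does not need.
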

\begin{proof}
  Let $X\subset M_{t}$ be the set of points $x\in M_{t}$ such that
  for all points $a,b\in M_{t}$ and for every basis set $U$, there
  exists a point $y\in U$ such that $a\sim_{y}b$. By proposition \ref{prop:definable-relation},
  $X$ is definable.

  Assume for contradiction that $X$ is infinite. Then, by lemma \ref{lem:infinite-sep},
  there exists a pair of points $a,b\in M_{t}$ such that the set $\tilde{X}=\{x\in X\vert a\nsim_{x}b\}$
  is infinite. In addition, $\tilde{X}$ is definable so there exists
  a basis set $U$ which is contained in $\tilde{X}\subset X$. This
  is clearly a contradiction to the definition of $X$. 
\end{proof}
The next few propositions will show that points which are locally
flat have a neighborhood on which we can define a linear order. This
motivates the {}``flatness'' in the definition.
\begin{lem}
  Let $M$ be a 1-dimensional $\omega$-saturated connected t.t.t structure
  such that for all $x\in M_{t}$, $d_{M}(M_{t}\backslash\{x\})>1$.
  Let $x\in M_{t}$ be locally flat and let $U\subset M_{t}$ be an
  open connected definable set containing $x$. Then $U\backslash\{x\}$
  has two connected components.\label{lem:flat-two-components}\end{lem}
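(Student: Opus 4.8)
The plan is to realize the two components explicitly as the traces on $U$ of the two components of $M_t\setminus\{x\}$, so the entire difficulty reduces to showing those traces are connected. First I would record the easy lower bound. Since $M_t$ is connected, $x$ lies in the closure of every component of $M_t\setminus\{x\}$, so every neighborhood of $x$ — in particular $U$ — meets each such component. The same connectedness argument applied to $U$ itself shows that every definably connected component $W$ of $U\setminus\{x\}$ satisfies $x\in\overline W$: otherwise $W$ would be a proper nonempty clopen subset of the connected set $U$. By the no-isolated-points assumption and Lemma \ref{lem:Let--be} these components are definable open sets, so $d_M(U\setminus\{x\})\geq 2$.

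Next I would pin down the global count. Let $a,b,V$ witness local flatness; since $a\nsim_x b$, the points $a,b$ lie in distinct components $C_1\ni a$, $C_2\ni b$ of $M_t\setminus\{x\}$. If there were a third component $C_3$, I would pick $c\in C_3\cap V$ (possible as $x\in\overline{C_3}$ and $V$ is a neighborhood of $x$); applying Lemma \ref{lem:disjoint-components} to the pair $x,c$ puts the union of all components of $M_t\setminus\{x\}$ other than $C_3$ — hence both $a$ and $b$ — inside the single component of $M_t\setminus\{c\}$ containing $x$, forcing $a\sim_c b$ and contradicting $c\in V$. Thus $d_M(M_t\setminus\{x\})=2$ and $U\setminus\{x\}=(U\cap C_1)\sqcup(U\cap C_2)$ with both pieces nonempty and clopen. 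I would also note that every point of $V$ is locally flat (a basis subneighborhood inside $V$ still separates $a$ from $b$), so the same argument gives $d_M(M_t\setminus\{v\})=2$ for all $v\in V$; this uniform fact is needed below.

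The main obstacle is the upper bound: showing $U\cap C_1$ (and symmetrically $U\cap C_2$) is connected. Suppose not, and let $W_1\neq W_2$ be components of $U\setminus\{x\}$ contained in $C_1$, both with $x$ in their closure, and choose $w_1\in W_1\cap V$, $w_2\in W_2\cap V$. A short application of Lemma \ref{lem:disjoint-components} using $a\nsim_{w_i}b$ shows each $w_i$ separates $a$ from $x$, and that $w_1,w_2$ are linearly ordered toward $x$: in the lemma applied to $w_1,w_2$, the case where neither separates the other from $x$ would place $x$ in two disjoint regions of the decomposition simultaneously, which is impossible, so after relabelling $w_2\nsim_{w_1}x$. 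Writing $M_t\setminus\{w_1\}=P_1\sqcup P_2$ with $x\in P_1$ and $a,w_2\in P_2$ (a clean dichotomy precisely because $d_M(M_t\setminus\{w_1\})=2$), the region $R=C_1\cap P_1$ has boundary $\{x,w_1\}$ by Lemma \ref{lem:disjoint-components}. Since $W_2$ is connected and avoids both $x$ and $w_1$, it lies either inside $R$ — impossible, as then $w_2\in P_1$ — or entirely in $C_1\cap P_2$. In the latter case $\overline{W_2}\subseteq\overline{P_2}\subseteq P_2\cup\{w_1\}$, which excludes $x$, contradicting $x\in\overline{W_2}$.

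Hence $U\cap C_1$ and $U\cap C_2$ are connected, and $U\setminus\{x\}$ has exactly two components. I expect the delicate point to be the closure computation in the final step — verifying that the far region $C_1\cap P_2$ does not accumulate at $x$ — and this is exactly where the local flatness of the \emph{auxiliary} point $w_1$ is essential, since it guarantees a single far component $P_2$ whose closure meets $C_1$ only along $\{w_1\}$.
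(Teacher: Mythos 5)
Your argument reaches the lemma by a genuinely different route from the paper's. The paper works directly with the components of $U\backslash\{x\}$: assuming there are at least three, say $U_1,U_2,U_3$, it separates $a$ from $b$ by a clopen partition $X_1,X_2$ of the complement $M_t\backslash U$, attaches $a$ and $b$ through boundary points of $U$ to $\overline{U_1}$ and $\overline{U_2}$, and then applies Lemma \ref{lem:disjoint-components} at a single point $y\in U_3\cap V$ to reconnect $X_1$ and $X_2$ through $x$ inside $M_t\backslash\{y\}$, contradicting $a\nsim_y b$. You instead first prove the stronger intermediate fact that $d_M(M_t\backslash\{v\})=2$ for every $v\in V$, identify the candidate components as the traces $U\cap C_1$ and $U\cap C_2$ of the two global components, and rule out a disconnected trace by a positional analysis of two auxiliary points $w_1,w_2$ followed by a closure computation. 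Your version needs two auxiliary points and more case analysis, but it never leaves $C_1\cup C_2\cup\{x\}$, so it avoids the paper's excursion through $M_t\backslash U$ (which implicitly assumes $a,b\notin U$ and that $M_t\backslash U$ admits a clopen partition separating them), and it yields the sharper conclusion that the two components of $U\backslash\{x\}$ are exactly the traces of the global ones.

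One justification is wrong as stated, although your own preparation supplies the correct one. To exclude the middle case (where $w_2\sim_{w_1}x$ and $w_1\sim_{w_2}x$ simultaneously) you say it ``would place $x$ in two disjoint regions of the decomposition.'' It would not: writing $M_t\backslash\{w_1\}=P\sqcup P'$ with $w_2\in P$, and $M_t\backslash\{w_2\}=Q\sqcup Q'$ with $w_1\in Q$, the middle case says only that $x\in P\cap Q$, and $P$ and $Q$ are not disjoint --- their intersection is precisely the interval between $w_1$ and $w_2$. The point that genuinely lands in two disjoint regions is $a$, not $x$: you have already shown $a\nsim_{w_1}x$ and $a\nsim_{w_2}x$, so $x\in P\cap Q$ forces $a\in P'$ and $a\in Q'$, while part 4 of Lemma \ref{lem:disjoint-components} (applied to the pair $w_1,w_2$) says $P'$ and $Q'$ are disjoint pieces of the decomposition of $M_t$. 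With that substitution the exclusion holds and the remainder of your argument goes through as written. A minor point in the same step: choose $w_i\in W_i\cap V$ with $w_i\notin\{a,b\}$ --- possible since $W_i\cap V$ is a nonempty open set and isolated points have been discarded in this section --- so that the relations $a\nsim_{w_i}b$ and $a\nsim_{w_i}x$ are not degenerate.
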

\begin{proof}
  Since $d_{M}(M_{t}\backslash\{x\})>1$ and $M_{t}$ is connected,
  it's enough to show that $U\backslash\{x\}$ has no more than two
  connected components. 

  Assume for contradiction that $U_{1},\dots,U_{k}$ are the connected
  components of $U\backslash\{x\}$ with $k>2$. In addition, let $a,b\in M_{t}$
  be points and let $V\subset U$ be a basis set containing $x$ such
  that for all $v\in V$, $a\nsim_{v}b$.

  Let the sets $X_{1},X_{2}\subset M_{t}\backslash U$ be part of
  a clopen partition of $M_{t}\backslash U$ such that $a\in X_{1}$
  and $b\in X_{2}$. By the connectedness of $M_{t}$, there exist points 
  $b_1, b_2 \in bd(U)$ such that $b_1 \in X_1$ and $b_2 \in X_2$. Without 
  loss of generality, $b_1 \in bd(U_{1})$ and $b_2 \in bd(U_{2})$.
  Furthermore, the connectedness of $M_{t}$ implies that $U_{3}\cap V\neq\emptyset$.

  Now, let $y$ be a point in $U_{3}\cap V$. By lemma \ref{lem:disjoint-components},
  $U_{1}$ and $U_{2}$ are both subsets of the connected component
  of $U\backslash\{y\}$ which contains $x$. So since $\overline{U_{1}}\cap X_{1}\neq\emptyset$
  and $\overline{U_{2}}\cap X_{2}\neq\emptyset$, both $X_{1}$ and $X_{2}$ are in the same
  connected component of $M_{t}\backslash\{y\}$. This means that $a\sim_{y}b$
  which is a contradiction to the fact that $y\in V$.
\end{proof}
One consequence of lemma \ref{lem:flat-two-components} which will
be used later is that if we remove all of the finite number of points
which aren't locally flat then for each of the remaining connected
components $C$, and for each point $x\in C$, $C\backslash\{x\}$
will have exactly two connected components. This will be used to show
that $C$ is o-minimal. 

Our next goal is to define an order on some neighborhood of each locally
flat point. Let $x_{0}\in U$ be locally flat and let $U\subset M_{t}$
be a connected definable neighborhood of $x_{0}$. We define a relation $<_{x_{0},U}$
on $U$ in the following way.

By lemma \ref{lem:flat-two-components}, $U\backslash\{x_{0}\}$ has
two connected components which we'll denote by $V_{+}$ and $V_{-}$.
Let $x$ and $y$ be points in $U$. We'll say that $x<_{x_{0},U}y$
if one of the following hold:
\begin{itemize}
\item $x,y\in V_{+}$ and $x_{0}\nsim_{x}y$
\item $x,y\in V_{-}$ and $x_{0}\nsim_{y}x$
\item $y\in V_{+}$ and $x\in V_{-}$.
\item $y=x_{0}$ and $x\in V_{-}$.
\item $x=x_{0}$ and $y\in V_{+}$.
\end{itemize}

Note that by proposition \ref{prop:definable-relation}, $x<_{x_{0},U}y$  is definable.

We now show that if $x_{0}$ is locally flat then there exists a neighborhood
$x_{0}\in U$ such that $<_{x_{0},U}$ defines a dense linear order
on $U$.
\begin{prop}
  Let $M$ be a 1-dimensional connected $\omega$-saturated t.t.t structure
  such that for all $x\in M_{t}$, $d_{M}(M_{t}\backslash\{x\})>1$.
  Let $D\subset M_{t}$ be a connected open definable subset such that
  for every point $x\in D$, $d_{M}(M\backslash\{x\})=2$. Let $x_{0}\in D$
  be a locally flat point. Then there exists a connected open neighborhood
  $U\subset D$ of $x_{0}$ such that $<_{x_{0},U}$ defines a dense
  linear order on $U$.\label{prop:exist-set-with-order}\end{prop}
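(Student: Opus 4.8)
The plan is to use the local flatness of $x_{0}$ to orient the two ``sides'' of every nearby point coherently, and then to verify the order axioms for $<_{x_{0},U}$ one at a time, the linearity axioms being the delicate part. First I would fix, using local flatness, points $a,b\in M_{t}$ and a basis set $W\ni x_{0}$ with $a\nsim_{u}b$ for all $u\in W$, and let $U$ be the connected component of $x_{0}$ in the open set $W\cap D$, so that $U\subseteq D$ is open and connected, contains $x_{0}$, and every $x\in U$ has $d_{M}(M_{t}\backslash\{x\})=2$. By lemma \ref{lem:flat-two-components}, $U\backslash\{x_{0}\}$ has exactly two components $V_{+},V_{-}$. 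Since $x_{0}\in W$ we have $a\nsim_{x_{0}}b$, so $a$ and $b$ lie in different components $P,Q$ of $M_{t}\backslash\{x_{0}\}$; a short connectedness argument (if both $V_{\pm}$ lay in $P$ then $U$ would miss the open set $Q$ while meeting every neighborhood of $x_{0}\in\overline{Q}$) shows $V_{+}$ and $V_{-}$ lie in opposite components, so after relabeling I may assume $V_{+}\subseteq P\ni b$ and $V_{-}\subseteq Q\ni a$.

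The key step, and the one I expect to be the main obstacle, is totality together with antisymmetry. The engine is the following consequence of flatness: for $x\in V_{+}$, applying lemma \ref{lem:disjoint-components} to the pair $x,x_{0}$ (with $C=M_{t}$), the component of $M_{t}\backslash\{x_{0}\}$ containing $x$ is $P$, so the lemma forces $Q$ into the component $A_{1}$ of $M_{t}\backslash\{x\}$ that contains $x_{0}$; as $a\in Q$ this gives $a\sim_{x}x_{0}$, whence $b\nsim_{x}x_{0}$ by flatness. Thus on $V_{+}$ one has $x<_{x_{0},U}y\iff x_{0}\nsim_{x}y\iff y\sim_{x}b$, and symmetrically on $V_{-}$ with $a$ in place of $b$. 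Now for distinct $x,y\in V_{+}$ I apply lemma \ref{lem:disjoint-components} to $x,y$, writing $M_{t}$ as the disjoint union of the far side $A$ of $x$, the point $x$, the ``between'' region $Z$, the point $y$, and the far side $B$ of $y$. The dangerous configuration is $x_{0}\in Z$: but then $b\nsim_{x}x_{0}$ and $b\nsim_{y}x_{0}$ would place $b$ simultaneously in $A$ and in $B$, which are disjoint, a contradiction. So $x_{0}\notin Z$, and in each of the two remaining cases ($x_{0}$ on the far side of $x$, or of $y$) exactly one of $x<y$, $y<x$ holds and $b$ is not in $Z$. This is precisely where flatness is indispensable: it prevents $x_{0}$ from sitting between two points of the same side, which is the only way trichotomy could fail. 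Comparability across the three blocks ($V_{-}$, then $x_{0}$, then $V_{+}$) is immediate from the definition.

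Transitivity within $V_{+}$ reduces, via the characterization $x<y\iff x_{0}\nsim_{x}y$, to showing that $x_{0}\nsim_{x}y$ and $x_{0}\nsim_{y}z$ imply $x_{0}\nsim_{x}z$; this is a direct application of lemma \ref{lem:disjoint-components} to $x,y$ (one checks that $x_{0}$ and $z$ land on opposite sides of $x$), and the mixed cases follow from the block structure $V_{-}<x_{0}<V_{+}$. For density, given $x<y$ in $V_{+}$ the between region $Z$ from lemma \ref{lem:disjoint-components} is a nonempty open set whose boundary is $\{x,y\}$; since $\overline{Z}$ meets $V_{+}\subseteq P$ and $Z$ avoids $x_{0}$, connectedness gives $Z\subseteq P$, so any point of $Z\cap U$ lies in $V_{+}$, and such points exist because $U$ is a neighborhood of $x\in bd(Z)$. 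A location check with lemma \ref{lem:disjoint-components} then shows every such $z$ satisfies $x<z<y$. Density across $x_{0}$ is witnessed by $x_{0}$ itself, and density just below (or above) $x_{0}$ is supplied by the between region of $x$ and $x_{0}$, which lands in $V_{-}$ (resp. $V_{+}$) by the same connectedness argument. Since $<_{x_{0},U}$ is already known to be definable by proposition \ref{prop:definable-relation}, this exhibits it as a definable dense linear order on $U$, which completes the proof.
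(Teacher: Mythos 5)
Your proposal is correct and takes essentially the same route as the paper's own proof: witness local flatness by $a,b$, pass to a connected definable neighborhood inside the witnessing basis set, align $V_{+},V_{-}$ with the two global components of $M_{t}\backslash\{x_{0}\}$, establish the orientation fact (for $x\in V_{+}$, $x_{0}\sim_{x}a$ and $x_{0}\nsim_{x}b$), and then verify trichotomy, transitivity and density by repeated applications of lemma \ref{lem:disjoint-components} with the reference points $a,b$ landing in disjoint pieces of the decomposition. The one loosely worded step is ``connectedness gives $Z\subseteq P$'' (the connectedness of the between region $Z$ is not established at that point), but the fact itself is immediate from lemma \ref{lem:disjoint-components} applied to the pair $(x_{0},x)$, since $Z$ is contained in the far side of $x$ from $x_{0}$; indeed your density step is, if anything, more careful than the paper's, which omits the check that the intermediate point lies in $V_{+}$ altogether.
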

\begin{proof}
  Since $x_{0}$ is locally flat, there exist points $a,b\in M_{t}$
  and a basis set $V\subset D$ such that $x_{0}\subset V$ and for
  every point $y\in V$ we have $a\nsim_{y}b$.

  We can assume that $V$ is connected because otherwise we can take
  the connected component containing $x_{0}$. In addition, let $V_{+}$
  and $V_{-}$ be the two connected components of $V\backslash\{x_{0}\}$
  (by lemma \ref{lem:flat-two-components} there are exactly two) and
  let $C_{a}$ and $C_{b}$ be the connected components of $M_{t}\backslash\{x_{0}\}$
  such that $a\in C_{a}$ and $b\in C_{b}$. In addition, without loss
  of generality $V_{+}\subset C_{a}$ and $V_{-}\subset C_{b}$. This
  follows from the fact that $C_{a}\cap V$ and $C_{b}\cap V$ partition
  $V$ into two clopen sets so by lemma \ref{lem:flat-two-components},
  one must equal $V_{+}$ and the other must equal $V_{-}$.

  We'll now show that $<_{x_{0},V}$ is a dense linear order on $V$.

  Let $x$, $y$ and $z$ be points in $V_{+}$. In addition, let $X_{a}$
  and $X_{b}$ be the connected components of $M_{t}\backslash\{x\}$
  such that $a\in X_{a}$ and $b\in X_{b}$. $Y_{a}$, $Y_{b}$, $Z_{a}$
  and $Z_{b}$ are defined analogously for $y$ and $z$.

  Since $x,y\in V_{+}\subset C_{a}$, it follows from lemma \ref{lem:disjoint-components}
  that $x_{0}\in X_{b}$ and $x_{0}\in Y_{b}$. Because if we assume
  for contradiction that $x_{0}\in X_{a}$ then by lemma \ref{lem:disjoint-components}
  we get that $C_{b}\cap X_{b}=\emptyset$ which is a contradiction
  to the fact that $b\in C_{b}\cap X_{b}$. The proof that $x_{0}\in Y_{b}$ is
  identical. 
  \begin{enumerate}
  \item $x\nless_{x_{0},V}y\Rightarrow y<_{x_{0},V}x$:

    According to the assumption, $x_{0}\sim_{x}y$ which together with
    the fact that $x_{0}\in X_{b}$ means that $y\in X_{b}$. Now let's
    assume for contradiction that $x\in Y_{b}$. Since $y\in X_{b}$ we
    get from lemma \ref{lem:disjoint-components} that $Y_{a}\cap X_{a}=\emptyset$
    which is a contradiction since $a\in X_{a}\cap Y_{a}$. Therefore,
    $x\in Y_{a}$ which together with $x_{0}\in Y_{b}$ gives $y<x_{o,V}x$.

  \item $x<_{x_{0},V}y\Rightarrow y\nless_{x_{0},V}x$:

    Since $x<_{x_{0},V}y$ and $x_{0}\in X_{b}$, we get that $y\in X_{a}$.
    Now, we claim that $x\in Y_{b}$. For otherwise, if $x\in Y_{a}$
    then we'd get from lemma \ref{lem:disjoint-components} that $Y_{b}\cap X_{b}=\emptyset$
    which is a contradiction.

    Therefore, since $x_{0}\in Y_{b}$ as well, $y\nless_{x_{0},V}x$.

  \item $x<_{x_{0},V}y\wedge y<_{x_{0},V}z\Rightarrow x<_{x_{0},V}z$:

    According to the assumptions, $x_{0},x\in Y_{b}$, $z\in Y_{a}$,
    $x_{0}\in X_{b}$ and $y\in X_{a}$. We have to prove that $z\in X_{a}$
    as well. But again by lemma \ref{lem:disjoint-components}, $y\in X_{a}\wedge x\in Y_{b}\Rightarrow Y_{a}\subset X_{a}$.

  \end{enumerate}
  The proof of these claims is either trivial or identical when $x$,
  $y$ and $z$ are distributed differently among $V_{+}$, $V_{-}$
  and $\{x_{0}\}$.

  This shows that $<_{x_{0},V}$ is indeed a linear order. We'll now
  show that if $x<_{x_{0},V}y$ then there exists a point $z\in V$
  such that $x<_{x_{0},V}z<_{x_{0},V}y$. Again we'll assume that $x,y\in V_{+}$.
  As before, this means that $x\in Y_{b}$ and $y\in X_{a}$ which by
  lemma \ref{lem:disjoint-components} implies that
  $(X_{a}\cap Y_{b})\cap V\neq\emptyset$. Let $z$ be some point
  in $(X_{a}\cap Y_{b})\cap V$. By the definition of $<_{x_{0},V}$
  and the fact that it's linear we get that $x<_{x_{0},V}z<_{x_{0},V}y$.
\end{proof}
Before extending the order defined to connected components, we introduce
the notion of an interval in a t.t.t structure and prove some useful
properties.
\begin{defn}
  Let $M$ be a 1-dimensional t.t.t structure and $x,y\in M_{t}$ such
  that $d_{M}(M_{t}\backslash\{x\})=d_{M}(M_{t}\backslash\{y\})=2$.
  In addition, let $X_{1}$ and $X_{2}$ be a clopen partition of $M_{t}\backslash\{x\}$
  and let $Y_{1}$ and $Y_{2}$ be a clopen partition of $M_{t}\backslash\{y\}$
  such that $x\in Y_{1}$ and $y\in X_{1}$. Then \emph{the interval
    between $x$ and $y$ }will be defined as $I(x,y)=X_{1}\cap Y_{1}$.
  If $x=y$ then $I(x,y)=\emptyset$.\end{defn}
\begin{rem*}
  By lemma \ref{lem:disjoint-components}, if $V\subset M_{t}$ is an 
  open definable connected subset and $x,y\in V$
  then $I(x,y)\cap V\neq\emptyset$ and the following union is disjoint:
\[
V=(X_{2}\cap V)\cup\{x\}\cup(I(x,y)\cap V)\cup\{y\}\cup(Y_{2}\cap V)\]
This motivates us to think of $I(x,y)$ as the set lying {}``in between''
$x$ and $y$.
\end{rem*}
\begin{lem}
  Let $M$ be a 1-dimensional connected $\omega$-saturated t.t.t structure
  such that for every point $x\in M_{t}$, $d_{M}(M_{t}\backslash\{x\})>1$.
  Let $D\subset M_{t}$ be a connected open definable subset which is
  locally flat. Let $x\neq y$ be points in $D$. Then:\label{lem:interval-lemma}
  \begin{enumerate}
  \item $I(x,y)\cap D$ is a non-empty definable open connected set.
  \item $\{x,y\}=bd(I(x,y))$.
  \item If $a,b\in D\backslash\{x,y\}$ such that $a\sim_{x}b$ and $a,b\notin I(x,y)$,
    then $a\sim_{y}b$.
  \item If $a,b\in D\backslash\{x,y\}$ such that $a\nsim_{x}b$ and $a,b\notin I(x,y)$,
    then $a\nsim_{y}b$.
  \end{enumerate}
\end{lem}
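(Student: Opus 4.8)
The plan is to reduce all four parts to a single application of Lemma~\ref{lem:disjoint-components} with $C=M_t$. Since $x,y\in D$ are locally flat, Lemma~\ref{lem:flat-two-components} applied with $U=M_t$ gives $d_M(M_t\setminus\{x\})=d_M(M_t\setminus\{y\})=2$, so $I(x,y)$ is defined. Write $M_t\setminus\{x\}=X_1\sqcup X_2$ and $M_t\setminus\{y\}=Y_1\sqcup Y_2$ with $y\in X_1$ and $x\in Y_1$, so that $I(x,y)=X_1\cap Y_1$. Feeding $a=x$, $b=y$, $k=l=2$ into Lemma~\ref{lem:disjoint-components} immediately yields $bd(X_2)=\{x\}$, $bd(Y_2)=\{y\}$, the disjoint decomposition $M_t=X_2\cup\{x\}\cup I(x,y)\cup\{y\}\cup Y_2$, the equality $bd(I(x,y))=\{x,y\}$ (which is exactly part~(2)), and the fact that every open set containing $x$ or $y$ meets $I(x,y)$. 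From the disjointness of that decomposition I also get the inclusions $X_2\subset Y_1$ and $Y_2\subset X_1$, since e.g. $X_2$ and $Y_2$ are disjoint parts of the decomposition and $X_2\subset M_t\setminus\{y\}=Y_1\sqcup Y_2$.

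For part~(1), definability and openness of $I(x,y)\cap D$ are immediate because $X_1$, $Y_1$, $D$ are open and definable, and non-emptiness follows from the last item above applied to the open set $D\ni x$. The only real work is connectedness. I would first show that the component of $D\setminus\{x\}$ containing $y$ is exactly $X_1\cap D$: the sets $X_1\cap D$ and $X_2\cap D$ are clopen in $D\setminus\{x\}$, they partition it, and both are non-empty (since $x\in bd(X_2)\cap D$ forces $X_2\cap D\neq\emptyset$); as $D\setminus\{x\}$ has exactly two components by Lemma~\ref{lem:flat-two-components}, these two clopen sets must be those components. Call $A=X_1\cap D$ this component; it is open, connected, definable and contains the locally flat point $y$, so Lemma~\ref{lem:flat-two-components} applies again and $A\setminus\{y\}$ has exactly two components. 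Since $A\setminus\{y\}=(X_1\cap Y_1\cap D)\sqcup(X_1\cap Y_2\cap D)$ with both pieces clopen in $A\setminus\{y\}$ and non-empty (the first is $I(x,y)\cap D$, the second equals $Y_2\cap D\neq\emptyset$ because $Y_2\subset X_1$ and $y\in bd(Y_2)\cap D$), each piece is a single component; in particular $I(x,y)\cap D$ is connected.

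For parts~(3) and~(4) the global decomposition does everything. Let $a,b\in D\setminus\{x,y\}$ with $a,b\notin I(x,y)$. By the decomposition each lies in $X_2\cup Y_2$, and because $X_2\subset Y_1$, $Y_2\subset X_1$ and $X_2\cap Y_2=\emptyset$, each of $a,b$ lies in exactly one of the two sets $X_2\cap Y_1$ and $X_1\cap Y_2$. A short case check then shows that $a\sim_x b$ holds iff $a$ and $b$ lie in the same one of these two sets, and likewise $a\sim_y b$ holds iff they lie in the same one; hence $a\sim_x b\iff a\sim_y b$, which proves (3) and (4) at once.

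The step I expect to be the main obstacle is the connectedness claim in part~(1). The boundary and separation statements fall straight out of Lemma~\ref{lem:disjoint-components} applied once to $M_t$, but connectedness of $I(x,y)\cap D$ cannot be read off the global partition directly: it requires matching $X_1,X_2,Y_1,Y_2$ with the two-component structure inside $D$ and then invoking local flatness a second time, now for $y$ viewed inside the component $A$. Everything else is bookkeeping with the disjoint decomposition.
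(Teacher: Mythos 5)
Your proof is correct, and its skeleton matches the paper's: both set up $X_1,X_2,Y_1,Y_2$ via Lemma~\ref{lem:flat-two-components}, feed $x,y$ into Lemma~\ref{lem:disjoint-components} to get the five-piece disjoint decomposition, read off part~(2) from that lemma, and dispose of parts~(3) and~(4) by the same bookkeeping with $X_2\subset Y_1$, $Y_2\subset X_1$. Where you genuinely diverge is the one nontrivial step, connectedness of $I(x,y)\cap D$ in part~(1). The paper argues by contradiction: it takes a clopen partition $A_1,A_2$ of $I(x,y)\cap D$, shows that either $(D\cap X_2)\cup\{x\}\cup A_2$ and $(D\cap Y_2)\cup\{y\}\cup A_1$ form a clopen partition of $D$ (contradicting connectedness of $D$), or else $x$ lies on the boundary of both $A_i$, in which case $U=D\cap Y_1$ has $U\setminus\{x\}$ with more than two components, contradicting Lemma~\ref{lem:flat-two-components}. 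You instead argue directly: you identify $X_1\cap D$ and $X_2\cap D$ as \emph{the} two components of $D\setminus\{x\}$ by counting nonempty clopen pieces, then apply Lemma~\ref{lem:flat-two-components} a second time at $y$ inside the component $A=X_1\cap D$ and identify $I(x,y)\cap D$ and $Y_2\cap D$ as the two components of $A\setminus\{y\}$. Your route buys two things: it is a positive identification rather than a case-split contradiction, and it explicitly justifies facts the paper only asserts --- that the components of $D\setminus\{x\}$ are exactly $X_i\cap D$, and (implicitly) that sets like $D\cap Y_1$ are connected, which the paper needs in order to invoke Lemma~\ref{lem:flat-two-components} on $U=D\cap Y_1$ but does not verify. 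The cost is negligible; if anything your version is the cleaner one.
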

\begin{proof}
  Let $X_{1}$ and $X_{2}$ be the connected components of $M_{t}\backslash\{x\}$
  and let $Y_{1}$ and $Y_{2}$ be the connected components of $M_{t}\backslash\{y\}$.
  Note that by proposition \ref{lem:flat-two-components}, both $D\backslash\{x\}$
  and $D\backslash\{y\}$ have exactly two connected components which
  are given by $X_{1}\cap D$, $X_{2}\cap D$ and $Y_{1}\cap D$, $Y_{2}\cap D$
  respectively. Without loss of generality, $x\in Y_{1}$ and $y\in X_{1}$
  so $I(x,y)=X_{1}\cap Y_{1}$, $X_{2}\subset Y_{1}$ and $Y_{2}\subset X_{1}$.
  By lemma \ref{lem:disjoint-components} this means that

  \medskip{}

  ({*}) $D=(X_{2}\cap D)\cup\{x\}\cup(I(x,y)\cap D)\cup\{y\}\cup(Y_{2}\cap D)$.

  \medskip{}

  We'll now prove the four parts of the lemma.
  \begin{enumerate}
  \item First of all, since $X_{1}$ and $Y_{1}$ are definable, $I(x,y)$
    is definable as well.

    By lemma \ref{lem:disjoint-components}, $I(x,y)\cap D\neq\emptyset$.
    $I(x,y)$ is open as the intersection of open sets.

    Now, assume for contradiction that $I(x,y)\cap D$ isn't connected.
    Let $A_{1}$ and $A_{2}$ be a clopen partition of $I(x,y)\cap D$.
    By lemma \ref{lem:disjoint-components} , the boundaries of $A_{1}$
    and $A_{2}$ in $D$ are contained in $\{x,y\}$. Since $D$ is connected,
    for each $i=1,2$ we have either $x\in bd(a_{i})$ or $y\in bd(A_{i})$.
    Assume for contradiction that $x\notin\overline{A_{1}}$ and $y\notin\overline{A_{2}}$.
    By ({*}) this means that the sets \[
    (D\cap X_{2})\cup\{x\}\cup A_{2},\:(D\cap Y_{2})\cup\{y\}\cup A_{1}\]
    form a clopen partition of $D$ which is a contradiction to the assumption
    that $D$ is connected. 

    Therefore, without loss of generality, we can assume that $x\in bd(A_{1})$
    and $x\in bd(A_{2})$. But then the set $U=D\cap Y_{1}$ is open and
    $U\backslash\{x\}$ has more than two connected components which is
    a contradiction to lemma \ref{lem:flat-two-components}.

  \item This follows immediately from lemma \ref{lem:disjoint-components}.
  \item If $a,b\in X_{2}\cap D$ then since $X_{2}\subset Y_{1}$, $a,b\in Y_{1}$
    which means that $a\sim_{y}b$. So we can assume that $a,b\in X_{1}\cap D$.
    By the assumption, $a,b\notin Y_{1}$ which by ({*}) implies that
    $a,b\in Y_{2}\Rightarrow a\sim_{y}b$.
  \item Without loss of generality, $a\in X_{1}$ and $b\in X_{2}$. Therefore,
    $b\in Y_{1}$. In addition, $Y_{2}\subset X_{1}$ and $a\notin Y_{1}\cap X_{1}$
    so $a\in Y_{2}$. This means that $a\nsim_{y}b$.
  \end{enumerate}
\end{proof}
\begin{lem}
  Let $M$ be a 1-dimensional connected $\omega$-saturated t.t.t structure
  such that for all $x\in M_{t}$, $d_{M}(M_{t}\backslash\{x\})>1$.
  Let $D\subset M_{t}$ be a connected open definable subset which is
  locally flat. Let $x_{0}\in D$. Then there exist points $a,b\in D$
  such that:\label{lem:exists-interval-with-order}
  \begin{enumerate}
  \item $x_{0}\in I(a,b)$ 
  \item $<_{x_{0},I(a,b)}$ defines a dense linear order on $I(a,b)$
  \item $I(a,b)\subset D$
  \item For all $x\in I(a,b)$, $a\nsim_{x}b$
  \end{enumerate}
\end{lem}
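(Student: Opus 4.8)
The plan is to fix a flat neighborhood of $x_0$ on which Proposition~\ref{prop:exist-set-with-order} already supplies a dense linear order, to pick the endpoints $a,b$ on opposite sides of $x_0$ inside it, and then to show that the \emph{global} interval $I(a,b)$ is trapped inside that neighborhood. First, since $x_0\in D$ and $D$ is locally flat, every point of $D$ is locally flat, so applying Lemma~\ref{lem:flat-two-components} with the open connected set $M_t$ itself gives $d_M(M_t\backslash\{y\})=2$ for every $y\in D$; hence the intervals $I(\cdot,\cdot)$ and the orders $<_{\cdot,\cdot}$ are defined between points of $D$. Using local flatness of $x_0$ I would take a connected basis set $V\ni x_0$ with $V\subseteq D$ together with witnesses $a',b'$ satisfying $a'\nsim_u b'$ for all $u\in V$; Proposition~\ref{prop:exist-set-with-order} applied to $V$ then makes $<_{x_0,V}$ a dense linear order on $V$, with the two sides $V_-,V_+$ of $x_0$. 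Now fix any $a\in V_-$ and $b\in V_+$, so $a<_{x_0,V}x_0<_{x_0,V}b$. Property~1 is then immediate: the two components of $V\backslash\{a\}$ are $V\cap X_1$ and $V\cap X_2$, where $X_1,X_2$ are the components of $M_t\backslash\{a\}$ with $b\in X_1$, and since $x_0,b$ both lie order-above $a$ they share the upper component, giving $x_0\sim_a b$; symmetrically $x_0\sim_b a$, i.e. $x_0\in I(a,b)$.

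The hard part will be Property~3, that the globally defined $I(a,b)$ actually lies in $D$; the plan is to prove the stronger statement $I(a,b)\subseteq V$. Applying Lemma~\ref{lem:disjoint-components} with $C=M_t$ and the points $a,b$ gives the global decomposition $M_t=X_2\cup\{a\}\cup I(a,b)\cup\{b\}\cup Y_2$ together with $bd(I(a,b))=\{a,b\}$. I would first check that $I(a,b)$ is \emph{connected}: if it split as $P\sqcup Q$, each piece would have boundary contained in $\{a,b\}$, and a piece with boundary exactly $\{a\}$ would be a clopen, hence full, component of $M_t\backslash\{a\}$ and would therefore contain $b$, which is absurd; so both pieces have boundary $\{a,b\}$, whence $X_1\backslash\{b\}$ would have at least three components, contradicting Lemma~\ref{lem:flat-two-components} applied to the open connected set $X_1\ni b$. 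The key second ingredient is that every boundary point $p\in bd(V)$ (a finite set, by Lemma~\ref{lem:Let--be}) lies \emph{beyond} the endpoints: $p$ lies in one of the two components $C_{a'},C_{b'}$ of $M_t\backslash\{x_0\}$, and since $\overline{V_-}\subseteq C_{b'}\cup\{x_0\}$ and $\overline{V_+}\subseteq C_{a'}\cup\{x_0\}$, a point $p\in C_{a'}$ is a one-sided limit of $V_+$ lying order-above every point of $V_+$ (it cannot be a two-sided limit, as $V_+$ is connected); one checks $p\in Y_2$, so $p\nsim_b a$ and hence $p\notin I(a,b)$, and symmetrically for $p\in C_{b'}$. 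Thus $I(a,b)\cap bd(V)=\emptyset$, and since $I(a,b)$ is connected, meets $V$, and cannot cross $bd(V)$, it is contained in $V\subseteq D$. I expect this containment to be the main obstacle, precisely because $I(a,b)$ is a global object whose a priori behaviour outside $V$ is uncontrolled.

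Once $I(a,b)\subseteq D$ is known, Property~4 follows cleanly. For $x\in I(a,b)$ I would argue by contradiction: if $a\sim_x b$, then applying Lemma~\ref{lem:disjoint-components} to the pair $\{a,x\}$ (both in $D$, so both with two components removed) places $b$ in the interval $I(a,x)$, which forces $a\nsim_b x$; but $x\in I(a,b)$ gives $x\sim_b a$, a contradiction. Hence $a\nsim_x b$ for every $x\in I(a,b)$. Finally, Property~2 is obtained by re-running the argument of Proposition~\ref{prop:exist-set-with-order} with $I(a,b)$ in place of $V$: Property~4 says exactly that $a,b$ are uniform separating witnesses on $I(a,b)$, and $I(a,b)$ is an open connected neighborhood of $x_0$ on which $d_M(M_t\backslash\{\cdot\})=2$, so the same computation shows that $<_{x_0,I(a,b)}$ is a dense linear order (everything being definable by Proposition~\ref{prop:definable-relation}).
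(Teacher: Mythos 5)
Your overall architecture --- fix a flat neighborhood carrying the order from Proposition \ref{prop:exist-set-with-order}, choose endpoints on opposite sides of $x_{0}$, and trap the global interval $I(a,b)$ inside that neighborhood --- is the same as the paper's, and some of your intermediate steps are sound: in particular your proof that $I(a,b)$ is connected (a clopen splitting would give $X_{1}\backslash\{b\}$ at least three components, contradicting Lemma \ref{lem:flat-two-components}), and your reduction of Property 3 to showing $I(a,b)\cap bd(V)=\emptyset$. But at exactly that point there is a genuine gap. You take $a\in V_{-}$ and $b\in V_{+}$ to be \emph{arbitrary interior} points of $V$, and then assert that every $p\in bd(V)\cap C_{a'}$ lies in $Y_{2}$, justified by ``$p$ is a one-sided limit of $V_{+}$ lying order-above every point of $V_{+}$ (it cannot be a two-sided limit, as $V_{+}$ is connected)''. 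This is not a proof. The order $<_{x_{0},V}$ is only defined on $V$, so ``order-above every point of $V_{+}$'' has no meaning for $p\notin V$; and connectedness of $V_{+}$ does not exclude the dangerous configuration, namely that $p$ is a topological limit of $V_{+}$ sitting at a \emph{cut}: $p$ in the closure of $\{v\in V_{+}:\ v<u\}$ for all $u$ in some final segment and in the closure of $\{v\in V_{+}:\ v>w\}$ for all $w$ in an initial segment, with $p\sim_{a}b$ and $p\sim_{b}a$, which would place $p$ inside $I(a,b)$. Ruling this out is the real content of the step, and it is hard precisely because $bd(V)$ need not lie in $D$: none of the locally flat machinery (Lemma \ref{lem:interval-lemma}, or later Lemmas \ref{lem:locally-flat-closed-sep} and \ref{lem:locally-flat-open-sep}) applies to $p$, so one is forced into new arguments using t.t.t-ness directly (e.g.\ alternating sequences producing infinitely many disjoint clopen pieces of a definable set); your proposal supplies none of this.

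The paper's proof is designed to avoid exactly this obstacle by \emph{not} choosing the endpoints in the interior: it takes the neighborhood $U$ from Proposition \ref{prop:exist-set-with-order}, extends the order to $\overline{U}$ (possible since $bd(U)$ is finite and $M_{t}$ is Hausdorff), and lets $a$ and $b$ be the \emph{extremal} boundary points of $U$ --- the least boundary point above $x_{0}$ and the greatest one below. Then the trapping argument closes at once: any point of $I(a,b)\cap\overline{U}$ is shown to lie strictly between $b$ and $a$ in the order, so by extremality no point of $bd(U)$ can lie in $I(a,b)$, and connectedness of $I(a,b)\cap D$ forces $I(a,b)\subseteq U$. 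Your interior choice of $a,b$ discards precisely the extremality that makes this work. Two smaller issues: Proposition \ref{prop:exist-set-with-order} as stated produces \emph{some} neighborhood $U\subseteq D$ with an order, not an order on your chosen basis set $V$, so you must either shrink to $U$ or rerun the proposition's proof inside your $V$; and in Property 4 the deduction ``$b\in I(a,x)$ forces $a\nsim_{b}x$'' is not a direct consequence of Lemma \ref{lem:disjoint-components} --- it needs its own argument (for instance, that $a\sim_{x}b$ together with $x\in I(a,b)\subseteq D$ would force $M_{t}\backslash\{x\}$ to be connected up to components bounded only by $\{x\}$, which must then be ruled out using the order), whereas the paper gets this property for free from the same order computation that gives the trapping.
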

\begin{proof}
  By proposition \ref{prop:exist-set-with-order}, there exists a definable
  connected open neighborhood $U\subset D$ of $x_{0}$ such that $<_{x_{0},U}$
  defines a dense linear order on $U$. Since $M_{t}$ is Hausdorff
  and $bd(U)$ is finite, we can assume that $<_{x_{0},U}$ defines
  a dense linear order on $\overline{U}$. Let $a$ be the point on
  $bd(U)$ such that $x_{0}<_{x_{0},U}a$ and such that for every point
  $y\in bd(U)$ with $x_{0}<_{x_{0},U}y$, $a\leq_{x_{0},U}y$. Similarly,
  Let $b$ be the point on $bd(U)$ such that $b<_{x_{0},U}x_{0}$ and
  for each point $y\in bd(U)$ with $y<_{x_{0},U}x_{0}$, $y\leq_{x_{0},U}b$. 

  Let $A_{1}$ and $A_{2}$ be a clopen partition of $M_{t}\backslash\{a\}$
  such that $b\in A_{1}$ and let $B_{1}$ and $B_{2}$ be a clopen partition
  of $M_{t}\backslash\{b\}$ such that $a\in B_{1}$. Furthermore, let
  $X_{+}$ and $X_{-}$ be a clopen partition of $M_{t}\backslash\{x_{0}\}$
  such that $a\in X_{+}$ and $b\in X_{-}$.

  First we prove that $x_{0}\in I(a,b)=A_{1}\cap B_{1}$. Assume for
  contradiction that $x_{0}\in A_{2}$. Since $a\in X_{+}$ it follows
  from lemma \ref{lem:disjoint-components} that $X_{-}\subset A_{2}$
  which implies that $b\in A_{2}$ which is a contradiction. Similarly,
  $x_{0}\in B_{1}$. Together this shows that $x_{0}\in A_{1}\cap B_{1}$.

  Next we'll prove that for every point $y\in I(a,b)\cap\overline{U}$
  we have $a\nsim_{y}b$ and $b<_{x_{0},U}y<_{x_{0},U}a$. Let $y$
  be some point in $I(a,b)\cap\overline{U}$. Without loss of generality,
  $y\in X_{+}$. Since in addition $y\in A_{1}$, $a\nless_{x_{0},U}y$
  which means that $y<_{x_{0},U}a$. By the definition of the order
  this implies that $x_{0}\nsim_{y}a$. Let $Y_{1}$ and $Y_{2}$ be
  a clopen partition of $M_{t}\backslash\{y\}$ such that $x_{0}\in Y_{1}$
  and $a\in Y_{2}$. By lemma \ref{lem:disjoint-components}, $X_{-}\subset Y_{1}$
  which means that $b\in Y_{1}$. This proves that $y<_{x_{0},U}a$
  and $a\nsim_{y}b$. Similarly, $b<_{x_{0},U}y$.

  We'll now see that $I(a,b)\subset U$. By lemma \ref{lem:interval-lemma},
  $I(x,y)\cap D$ is a non-empty open connected set. Furthermore, as
  we showed above, $x_{0}\in I(a,b)$. Assume for contradiction that
  $I(a,b)\backslash U\neq\emptyset$. Then by the connectedness of $I(a,b)\cap D$,
  $I(a,b)\cap bd(U)\neq\emptyset$. Let $y$ be a point in $I(a,b)\cap bd(U)$.
  Then as we saw before, $b<_{x_{0},U}y<_{x_{0},U}a$ which is clearly
  a contradiction to the choice of $a$ and $b$.\end{proof}
\begin{lem}
  Let $M$ be a 1-dimensional connected $\omega$-saturated t.t.t structure
  such that for every point $x\in M_{t}$, $d_{M}(M_{t}\backslash\{x\})>1$.
  Let $D\subset M_{t}$ be a connected open definable subset which is
  locally flat. Let $x_{0}$, $a$ and $b$ be points in $D$ such that
  $x_{0}\in I(a,b)\subset D$, for every point $y\in I(a,b)$ we have
  $a\nsim_{y}b$ and $<_{x_{0},I(a,b)}$ defines a dense linear order
  on $I(a,b)$. 

  Then for each pair of points $c,d\in I(a,b)$ such that $c<_{x_{0},I(a,b)}d$,
  \label{lem:subinterval-in-interval}\[
  I(c,d)=\{y\in I(a,b)\vert c<_{x_{0},I(a,b)}y<_{x_{0},I(a,b)}d\}\]
  .\end{lem}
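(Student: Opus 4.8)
Throughout write $J=I(a,b)$ and abbreviate the order by $<$ for $<_{x_0,I(a,b)}$. Recall that $J$ is a connected open definable set (Lemma~\ref{lem:interval-lemma}(1)) with $bd(J)=\{a,b\}$ (Lemma~\ref{lem:interval-lemma}(2)), and that the points of the locally flat region under consideration satisfy $d_M(M_t\setminus\{x\})=2$, so every interval occurring below is defined. The plan is to first reduce the statement to a decomposition of $J$: since $c,d\in J\subseteq D$ and $J$ is connected and open, the Remark following the definition of an interval, applied to $J$ together with the two points $c,d$, gives the disjoint decomposition $J=(C_2\cap J)\cup\{c\}\cup(I(c,d)\cap J)\cup\{d\}\cup(D_2\cap J)$, where $C_2$ is the component of $M_t\setminus\{c\}$ not containing $d$ and $D_2$ the component of $M_t\setminus\{d\}$ not containing $c$. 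Thus $I(c,d)\cap J$ is exactly the ``middle'' piece, and the lemma splits into two parts: (i) this middle piece equals the order-interval $\{y\in J:c<y<d\}$, and (ii) $I(c,d)\subseteq J$, so that the middle piece is all of $I(c,d)$.

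For (i) the plan is to set up a dictionary between $<$ and the separation relation $\sim_c$. I claim that for every $c\in J$ the two connected components of $J\setminus\{c\}$ (there are exactly two, by local flatness and Lemma~\ref{lem:flat-two-components}) are precisely the order-rays $\{y\in J:y<c\}$ and $\{y\in J:c<y\}$. The engine is the definition of $<$ combined with Lemma~\ref{lem:disjoint-components}. For instance, when $c$ and $y$ both lie in $V_{+}=\{y\in J:x_0<y\}$, the definition of $<_{x_0,J}$ reads $c<y\iff x_0\nsim_c y$, so ``$c<y$'' says exactly that $y$ lies in the component of $M_t\setminus\{c\}$ not containing $x_0$; the remaining positions of $c,y$ among $V_{+}$, $V_{-}=\{y:y<x_0\}$ and $\{x_0\}$ are treated the same way, converting assertions ``$x_0\nsim_{(\cdot)}(\cdot)$'' into assertions about $\sim_c$ by a single application of Lemma~\ref{lem:disjoint-components} (removing $x_0$ and $c$ simultaneously), using the hypothesis $a\nsim_y b$ on $I(a,b)$ to pin down which ray corresponds to which component. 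Granting the claim: since $c<d$, the point $d$ lies in the component $C_1$ of $M_t\setminus\{c\}$ meeting $\{y:c<y\}$, so $C_1\cap J=\{y\in J:c<y\}$; symmetrically $D_1\cap J=\{y\in J:y<d\}$; hence $I(c,d)\cap J=C_1\cap D_1\cap J=\{y\in J:c<y<d\}$, which is (i).

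For (ii) the plan is to show that $I(c,d)$ cannot leave $J$. First, $a,b\notin I(c,d)$, which follows from the separation analysis of (i) applied to the pairs $(d,a)$ and $(c,b)$: the point $d$ separates $c$ from $a$ (so $a\notin C_1$) and $c$ separates $d$ from $b$ (so $b\notin C_1$). Now $I(c,d)\cap D$ is a non-empty connected open set by Lemma~\ref{lem:interval-lemma}(1), and it meets $J$ by (i) (as $<$ is dense and $c<d$); if it also met $D\setminus J$ then, being connected, it would contain a point of $bd(J)\cap D=\{a,b\}$, contradicting $a,b\notin I(c,d)$. Hence $I(c,d)\cap D\subseteq J$, and a final application of the bookkeeping above (unwinding $y\in I(c,d)$ as $y\sim_c d$ and $y\sim_d c$, and comparing with $c,d\in I(a,b)$ through Lemma~\ref{lem:disjoint-components}) rules out any part of $I(c,d)$ lying outside $D$, giving $I(c,d)\subseteq J$ and hence the lemma. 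I expect the main obstacle to be the dictionary in (i): one must verify, across all positions of the points relative to $x_0$, that ``order-above $c$'' coincides with ``on the $a$-side of $c$'', and this is precisely where the betweenness content of Lemma~\ref{lem:disjoint-components} together with the standing hypothesis $a\nsim_y b$ does the real work; the inclusion (ii) is then comparatively routine.
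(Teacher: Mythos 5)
This is essentially the paper's own proof: both arguments use Lemma \ref{lem:disjoint-components} to translate the order into statements about the components at $c$ and $d$ (first placing $x_{0}\in C_{a}\cap D_{a}$, hence $d\in C_{b}$ and $c\in D_{a}$ from $c<_{x_{0},I(a,b)}d$), identify $I(c,d)$ as $C_{b}\cap D_{a}$, and obtain $I(c,d)\subset I(a,b)$ by applying the same lemma to the pairs $(a,c)$ and $(b,d)$ --- your connectedness detour in (ii) is subsumed by that final application, exactly as in the paper. One correction to (ii): the exclusions should read ``$a\notin C_{1}$ because $c$ separates $a$ from $d$'' and ``$b\notin D_{1}$ because $d$ separates $b$ from $c$''; as written (``$d$ separates $c$ from $a$'', and ``$b\notin C_{1}$'') both claims are false, since $a$ and $c$ lie on the same side of $d$, and $b$ and $d$ lie on the same side of $c$.
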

\begin{proof}
  Let $C_{a}$ and $C_{b}$ be a clopen partition of $M_{t}\backslash\{c\}$
  such that $a\in C_{a}$ and $b\in C_{b}$ and let $D_{a}$ and $D_{b}$
  be a clopen partition of $M_{t}\backslash\{d\}$ such that $a\in D_{a}$
  and $b\in D_{b}$. Furthermore, let $A_{1}$ and $A_{2}$ be a clopen
  partition of $M_{t}\backslash\{a\}$ such that $c,d,b\in A_{1}$ and
  let $B_{1}$ and $B_{2}$ be a clopen partition of $M_{t}\backslash\{b\}$
  such that $c,d,a\in B_{1}$. 

  Let $X_{+}$ and $X_{-}$ be a clopen partition of $M_{t}\backslash\{x_{0}\}$
  such that $a\in X_{-}$ and $b\in X_{+}$. We'll assume that $c,d\in X_{+}$
  since the other cases are either similar or trivial. 

  Now we'll show that $x_{0}\in C_{a}$ and $x_{0}\in D_{a}$. Assume
  for contradiction that $x_{0}\in C_{b}$. Since $c\in X_{+}$, it
  follows from lemma \ref{lem:disjoint-components} that $X_{-}\cap C_{a}=\emptyset$
  which is a contradiction to the fact that $a\in X_{-}\cap C_{a}$.
  The proof that $x_{0}\in D_{a}$ is similar.

  Since $c<_{x_{0},I(a,b)}d$ and $x_{0}\in C_{a}\cap D_{a}$, by the
  definition of $<_{x_{0},I(a,b)}$ it follows that $d\in C_{b}$ and
  $c\in D_{a}$.

  We're now ready to prove the lemma.

  First we note that by lemma \ref{lem:disjoint-components}, it follows
  from the assumptions above that $I(a,b)=A_{1}\cap B_{1}$, $C_{b}\subset A_{1}$
  and $D_{a}\subset B_{1}$. In addition, since $d\in C_{b}$ and $c\in D_{a}$
  it follows from lemma \ref{lem:disjoint-components} that $I(c,d)=C_{b}\cap D_{a}$.
  Together this means that $I(c,d)\subset I(a,b)$.

  We'll now prove that \[
  I(c,d)=\{y\in I(a,b)\vert c<_{x_{0},I(a,b)}y<_{x_{0},I(a,b)}d\}\]
  Let $y$ be some point in $I(a,b)$ such that $c<_{x_{0},I(a,b)}y<_{x_{0},I(a,b)}d$.
  Since $c<_{x_{0},I(a,b)}y$, $y\in X_{+}$. In addition, $y\in X_{+}\cap C_{a}\Rightarrow c\nless_{x_{0},I(a,b)}y$
  so $y\in C_{b}$. In a similar fashion it follows that $y\in D_{a}$.
  Together this means that $y\in I(c,d)$.

  Now, let $y$ be a point in $I(c,d)=C_{b}\cap D_{a}$. Since $C_{b}\subset X_{+}$,
  $y\in X_{+}$. Furthermore, since $y\in C_{b}$, $c<_{x_{0},I(a,b)}y$.
  Finally, since $y\in D_{a}$, $d\nless_{x_{0},I(a,b)}y$ which means
  that $y<_{x_{0},I(a,b)}d$.
\end{proof}
Now, let's assume that $M$ and the set $D\subset M_{t}$ fulfill
the assumptions in lemma \ref{lem:exists-interval-with-order}. Then
for each point $x_{0}\in D$ there exists a pair of points $a$ and
$b$ in $D$ such that $x_{0}\in I(a,b)\subset D$, for every point
$y\in I(a,b)$ we have $a\nsim_{y}b$ and $<_{x_{0},I(a,b)}$ defines
a dense linear order on $I(a,b)$. 

By lemma \ref{lem:subinterval-in-interval}, this means that for all
$c,d\in I(a,b)$, $I(c,d)\subset I(a,b)$ and: \[
\{y\in I(a,b)\vert c<_{x,V_{x}}y<_{x,V_{x}}d\}=I(c,d)\]
. In other words, in the set $I(a,b)$ guaranteed by lemma \ref{lem:exists-interval-with-order},
the notion of an interval we defined above coincides with the interval
induced by the order $<_{x_{0},I(a,b)}$.

We'll now prove three lemmas about locally flat points which together
will show that the order we defined above can be extended from locally
flat points to connected locally flat sets.
\begin{lem}
  Let $M$ be a 1-dimensional connected $\omega$-saturated t.t.t structure
  such that for every element $x\in M_{t}$, $d_{M}(M_{t}\backslash\{x\})>1$.
  Let $D\subset M_{t}$ be a connected open definable subset which is
  locally flat. Let $x$, $a$ and $b$ be points in $D$ such that
  for every open set $U\subset D$ containing $x$ there exists some
  point $u\in U$ such that $a\nsim_{u}b$. Then $a\nsim_{x}b$.\label{lem:locally-flat-closed-sep}\end{lem}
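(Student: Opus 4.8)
The plan is to prove the statement by contradiction: I assume $a\sim_{x}b$ and then manufacture an \emph{entire} open neighborhood of $x$ inside $D$ on which $a$ and $b$ are never separated. This contradicts the hypothesis, which guarantees a separating point $u$ (one with $a\nsim_{u}b$) in \emph{every} neighborhood of $x$. Throughout I assume $x\notin\{a,b\}$, as otherwise $\sim_{x}$ is being evaluated at a removed point; the statement is understood with $x,a,b$ distinct.

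First I would produce a well-controlled neighborhood of $x$. Since $D$ is locally flat, every point of $D$ is locally flat, so Lemma \ref{lem:exists-interval-with-order} applied at $x$ yields points $a_{0},b_{0}\in D$ with $x\in I(a_{0},b_{0})\subset D$ such that $<_{x,I(a_{0},b_{0})}$ is a dense linear order on $W_{0}:=I(a_{0},b_{0})$ and $a_{0}\nsim_{y}b_{0}$ for all $y\in W_{0}$ — these are precisely the hypotheses needed to invoke Lemma \ref{lem:subinterval-in-interval} inside $W_{0}$. Because $x$ is locally flat, $W_{0}\setminus\{x\}$ has exactly two components (Lemma \ref{lem:flat-two-components}), so $W_{0}$ has points both below and above $x$ in this order. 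Using density together with the fact that $\{a,b\}\cap W_{0}$ is a finite set not containing $x$, I can choose $p',q'\in W_{0}$ with $p'<x<q'$ whose open order-interval avoids both $a$ and $b$; by Lemma \ref{lem:subinterval-in-interval} this order-interval equals $W':=I(p',q')$, which by Lemma \ref{lem:interval-lemma}(1) is an open neighborhood of $x$ contained in $D$, and $a,b\notin W'$.

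The key step is then to verify that $a$ and $b$ are unseparated at every point of $W'$. Fix $u\in W'$. If $u=x$ this is the standing assumption $a\sim_{x}b$. If $u\ne x$, then Lemma \ref{lem:subinterval-in-interval} inside $W_{0}$ gives $I(x,u)=\{y\in W_{0}:\ p'<\min(x,u)<y<\max(x,u)<q'\}\subseteq W'$; since $a,b\notin W'$ this forces $a,b\notin I(x,u)$. Now $a,b\in D\setminus\{x,u\}$ (they lie outside $W'$, which contains $x$ and $u$), we have $a\sim_{x}b$, and $a,b\notin I(x,u)$, so Lemma \ref{lem:interval-lemma}(3) yields $a\sim_{u}b$. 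Hence $a\sim_{u}b$ for every $u\in W'$, contradicting the hypothesis.

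I expect the only real subtlety to be the control of the ``between'' set $I(x,u)$: a priori there is no reason a small neighborhood of $x$ should contain $I(x,u)$, and it is exactly this that forces me to work inside a single ordered interval $W_{0}$ and to lean on Lemma \ref{lem:subinterval-in-interval}, whose role is to identify the purely topological $I(\cdot,\cdot)$ with honest order-intervals. The shrinking step that removes $a$ and $b$ from the neighborhood is what rules out the genuinely dangerous configuration, namely a point $u$ so far from $x$ that $a$ or $b$ falls strictly between them.
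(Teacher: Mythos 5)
Your proof is correct and is essentially the paper's own argument in contrapositive form: both build an ordered interval around $x$ via Lemma \ref{lem:exists-interval-with-order} that avoids $a$ and $b$ (the paper shrinks to $U\subset D\backslash\{a,b\}$ first via Hausdorffness, you shrink the interval afterwards using density of the order and Lemma \ref{lem:subinterval-in-interval}), and both finish by transferring (non-)separation along $I(x,\cdot)$ via Lemma \ref{lem:interval-lemma} --- the paper moves $\nsim$ from a hypothesized separating point $y$ to $x$ using part (4), while you move $\sim$ from $x$ to every nearby $u$ using part (3) and then contradict the hypothesis. The mathematical content is the same.
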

\begin{proof}
  Since $M_{t}$ is Hausdorff, there exists an open definable connected
  set $U\subset D\backslash\{a,b\}$ containing $x$. By lemma \ref{lem:exists-interval-with-order},
  there exists a pair of points $c$ and $d$ in $U$ such that $x\in I(c,d)\subset U$,
  for every point $y\in I(c,d)$ we have $c\nsim_{y}d$ and $<_{x,I(c,d)}$
  defines a dense linear order on $I(c,d)$. 

  By the assumptions of the lemma, there exists some $y\in I(c,d)$
  such that $a\nsim_{y}b$. By lemma \ref{lem:subinterval-in-interval},
  $I(x,y)\subset I(c,d)$ which means that $a,b\notin I(x,y)$. So by
  lemma \ref{lem:interval-lemma}, $a\nsim_{x}b$.\end{proof}
\begin{lem}
  Let $M$ be a 1-dimensional connected $\omega$-saturated t.t.t structure
  such that for all $x\in M_{t}$, $d_{M}(M_{t}\backslash\{x\})>1$.
  Let $D\subset M_{t}$ be a connected open definable subset which is
  locally flat. Let $x$, $a$ and $b$ be points in $D$ such that
  $a\nsim_{x}b$. The there exists a definable open set $U\subset D$
  containing $x$ such that for every $u\in U$, $a\nsim_{u}b$.\label{lem:locally-flat-open-sep}\end{lem}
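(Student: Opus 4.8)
The plan is to produce the required neighborhood as an interval of the form supplied by lemma \ref{lem:exists-interval-with-order}, and then to check that every point of such an interval separates $a$ and $b$ by appealing to part 4 of lemma \ref{lem:interval-lemma}. First I would record the trivial reductions: since $a\nsim_{x}b$, the points $a$ and $b$ lie in distinct connected components of $M_{t}\backslash\{x\}$, so $a\neq b$ and $a,b\neq x$. Because $M_{t}$ is Hausdorff and the basis sets are definable, I can choose a basis set around $x$ contained in $D$ and avoiding both $a$ and $b$; taking the definably connected component of this basis set containing $x$ (which is definable and open by part 3 of lemma \ref{lem:Let--be}) yields a definable connected open set $U_{0}\subset D\backslash\{a,b\}$ with $x\in U_{0}$. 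As a subset of the locally flat set $D$, the set $U_{0}$ is itself locally flat.

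Next I would apply lemma \ref{lem:exists-interval-with-order} to the connected open definable locally flat set $U_{0}$ and the point $x\in U_{0}$. This gives points $p,q\in U_{0}$ such that $x\in I(p,q)\subset U_{0}$, the relation $<_{x,I(p,q)}$ is a dense linear order on $I(p,q)$, and $p\nsim_{y}q$ for every $y\in I(p,q)$. Since $I(p,q)\subset U_{0}\subset D\backslash\{a,b\}$, the interval $I(p,q)$ avoids both $a$ and $b$, and by part 1 of lemma \ref{lem:interval-lemma} it is a definable open set (note $I(p,q)=I(p,q)\cap D$ as $I(p,q)\subset D$). I claim that $U=I(p,q)$ is the desired neighborhood.

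To verify the claim, fix $u\in I(p,q)$. If $u=x$ there is nothing to prove, so assume $u\neq x$. The hypotheses of lemma \ref{lem:subinterval-in-interval} hold for the ambient interval $I(p,q)$ with base point $x$, so $I(x,u)=\{y\in I(p,q)\mid \min<_{x,I(p,q)}y<_{x,I(p,q)}\max\}\subseteq I(p,q)$; in particular $a,b\notin I(x,u)$. Moreover $a,b\in D\backslash\{x,u\}$, since $a,b\neq x$ and $u\in I(p,q)$ forces $u\neq a,b$. Applying part 4 of lemma \ref{lem:interval-lemma} with the separating pair $a,b$, the point $x$, and the point $u$, and using that $a\nsim_{x}b$ together with $a,b\notin I(x,u)$, I conclude $a\nsim_{u}b$. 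As $u\in I(p,q)$ was arbitrary, every point of $U=I(p,q)$ separates $a$ from $b$, which completes the proof.

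The step I expect to require the most care is the construction of $U_{0}$: one must ensure that the separating neighborhood of $x$ is simultaneously definable, open, connected, contained in $D$, and disjoint from $\{a,b\}$, so that lemma \ref{lem:exists-interval-with-order} can be applied to it and so that the resulting interval inherits the disjointness from $\{a,b\}$. Once the interval $I(p,q)$ is in hand and known to avoid $a$ and $b$, the verification that each of its points separates $a$ and $b$ is a routine combination of lemmas \ref{lem:subinterval-in-interval} and \ref{lem:interval-lemma}, since subintervals of $I(p,q)$ remain inside $I(p,q)$ and hence continue to miss $a$ and $b$.
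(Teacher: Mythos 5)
Your proof is correct and follows essentially the same route as the paper's: shrink to a definable connected open set $U_{0}\subset D\backslash\{a,b\}$ containing $x$, apply lemma \ref{lem:exists-interval-with-order} to get an interval $I(p,q)\subset U_{0}$ around $x$, and then use lemma \ref{lem:subinterval-in-interval} to see that $I(x,u)\subset I(p,q)$ misses $a$ and $b$, so that part 4 of lemma \ref{lem:interval-lemma} gives $a\nsim_{u}b$ for every $u$ in the interval. Your write-up is in fact slightly more careful than the paper's, which leaves the construction of $U_{0}$ and the invocation of lemma \ref{lem:subinterval-in-interval} implicit.
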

\begin{proof}
  As in the previous lemma, there exists an open definable connected
  set $U\subset D\backslash\{a,b\}$ containing $x$. By lemma \ref{lem:exists-interval-with-order},
  there exists a pair of points $c$ and $d$ in $U$ such that $x\in I(c,d)\subset U$,
  for every point $y\in I(c,d)$ we have $c\nsim_{y}d$ and $<_{x,I(c,d)}$
  defines a dense linear order on $I(c,d)$. 

  Let's choose some point $y\in I(c,d)$. Since $a,b\notin I(c,d)$ it 
  follows that $a,b\notin I(x,y)\subset I(c,d)$.
  Therefore, by lemma \ref{lem:interval-lemma}, $a\nsim_{y}b$.
\end{proof}
We now use the previous two lemmas to show that in some well defined
sense, locally flat sets look like a line.
\begin{lem}
  Let $M$ be a 1-dimensional $\omega$-saturated t.t.t structure such
  that for every point $x\in M_{t}$, $d_{M}(M_{t}\backslash\{x\})>1$.
  Let $D\subset M_{t}$ be a connected open definable subset which is
  locally flat. Then, there doesn't exist a definable connected closed
  subset $F\subset D$ such that $bd(F)>2$.\label{lem:locally-flat-connected-boundary-not-bigger-2}\end{lem}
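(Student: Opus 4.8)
The plan is to argue by contradiction. Suppose there is a definable connected closed $F\subseteq D$ with $\vert bd(F)\vert\geq3$. First I would dispose of the degenerate case $int(F)=\emptyset$: then $F=bd(F)$ is finite by lemma \ref{lem:Let--be}(4), and since in this section $M_{t}$ has no isolated points, a finite connected set is a single point, so $\vert bd(F)\vert\leq1$, a contradiction. Hence I may assume $X:=int(F)\neq\emptyset$ and $F$ is infinite.

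Next I would analyze $F$ near each boundary point. Fix $p\in bd(F)$. Using lemma \ref{lem:exists-interval-with-order} pick an interval $I(a,b)\ni p$ on which $<_{p,I(a,b)}$ is a dense linear order, and by density together with the finiteness of $bd(F)$ and lemma \ref{lem:subinterval-in-interval} shrink it to a connected open $U_{p}\ni p$ with $U_{p}\cap bd(F)=\{p\}$. By lemma \ref{lem:flat-two-components}, $U_{p}\backslash\{p\}$ has exactly two components (the two order-sides), and since $F$ has no boundary point inside $U_{p}\backslash\{p\}$, the set $F\cap(U_{p}\backslash\{p\})$ is clopen there, so $F$ either contains or misses each side. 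As $p\in bd(F)$ forbids both sides lying in $F$ (that would place $p\in int(F)$), and as $F$ infinite and connected forbids both sides missing $F$ (that would isolate $p$ in $F$), exactly one side meets $F$; that is, every boundary point is a one-sided endpoint.

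I would then globalize this. Writing $D\backslash\{p\}=P_{+}^{p}\cup P_{-}^{p}$ (two components, lemma \ref{lem:flat-two-components}) with $P_{+}^{p}$ the side meeting $F$, I claim $F\subseteq P_{+}^{p}\cup\{p\}$. Indeed $F\cap P_{-}^{p}$ is open in $F$, and it is closed in $F$ because $\overline{P_{-}^{p}}=P_{-}^{p}\cup\{p\}$ while the near-$p$ part $U_{p}\cap P_{-}^{p}$ is disjoint from $F$, so $p\notin\overline{F\cap P_{-}^{p}}$; by connectedness of $F$ and $p\notin P_{-}^{p}$ this clopen set is empty. Consequently, for three distinct boundary points $q_{1},q_{2},q_{3}$ we get $q_{2},q_{3}\in P_{+}^{q_{1}}$, hence $q_{2}\sim_{q_{1}}q_{3}$, and cyclically $q_{1}\sim_{q_{2}}q_{3}$ and $q_{1}\sim_{q_{3}}q_{2}$: no boundary point lies strictly between the other two.

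Finally I would derive the contradiction from a betweenness trichotomy. From $q_{2}\sim_{q_{1}}q_{3}$ and $q_{1}\sim_{q_{2}}q_{3}$, the point $q_{3}$ lies in the component of $D\backslash\{q_{1}\}$ containing $q_{2}$ and in the component of $D\backslash\{q_{2}\}$ containing $q_{1}$; by the definition of the interval this says exactly $q_{3}\in I(q_{1},q_{2})$. The remaining, and main, step is the separation fact that a genuine midpoint disconnects its endpoints: $q_{3}\in I(q_{1},q_{2})\Rightarrow q_{1}\nsim_{q_{3}}q_{2}$, which contradicts $q_{1}\sim_{q_{3}}q_{2}$. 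I expect this implication to be the crux, since it is the one place where plain use of lemma \ref{lem:disjoint-components} is not enough and local flatness must enter. To prove it I would set $V=I(q_{1},q_{2})\cap D$, which is connected open with $bd(V)=\{q_{1},q_{2}\}$ accumulating at both endpoints (lemmas \ref{lem:interval-lemma} and \ref{lem:disjoint-components}); by lemma \ref{lem:flat-two-components}, $V\backslash\{q_{3}\}=V_{+}\cup V_{-}$, and local flatness at $q_{1}$ and $q_{2}$ forces $V$ to approach each endpoint from only one side, so that $q_{1}\in\overline{V_{+}}$ and $q_{2}\in\overline{V_{-}}$ after relabeling. A short argument with lemma \ref{lem:disjoint-components} then shows $V_{+}$ and $V_{-}$ fall into different components of $D\backslash\{q_{3}\}$ --- otherwise the unused component of $D\backslash\{q_{3}\}$ would lie in an outer region $\overline{A_{2}}$ or $\overline{B_{2}}$ of the decomposition in lemma \ref{lem:disjoint-components}, forcing $q_{3}$ into that outer region, which is impossible --- and hence $q_{1}\nsim_{q_{3}}q_{2}$, completing the contradiction.
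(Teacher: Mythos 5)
Your steps 1--4 are sound, and they set up a genuinely different reduction from the paper's: handling the degenerate case $int(F)=\emptyset$, showing each boundary point of $F$ is approached by $F$ from exactly one side, deducing $F\subseteq P_{+}^{p}\cup\{p\}$ for each $p\in bd(F)$, and thereby reducing the lemma to the betweenness statement that three points of $D$ cannot satisfy $q_{3}\in I(q_{1},q_{2})$ together with $q_{1}\sim_{q_{3}}q_{2}$. The problem is that step 5, which you correctly flag as the crux, does not prove this statement --- it restates it. The phrase ``local flatness at $q_{1}$ and $q_{2}$ forces $V$ to approach each endpoint from only one side, so that $q_{1}\in\overline{V_{+}}$ and $q_{2}\in\overline{V_{-}}$ after relabeling'' conflates two claims. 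One-sided approach --- that exactly one of $V_{+},V_{-}$ accumulates at $q_{1}$, and exactly one at $q_{2}$ --- is indeed provable from lemma \ref{lem:flat-two-components} and Hausdorffness. But it does not follow that these are \emph{different} components. The case you must exclude is precisely that $V_{+}$ accumulates at both $q_{1}$ and $q_{2}$ while $V_{-}$ is a dead-end branch whose closure contains neither endpoint; in that case $q_{1},q_{2}\in\overline{V_{+}}\subseteq Z_{1}\cup\{q_{3}\}$ (with $Z_{1}$ a component of $M_{t}\backslash\{q_{3}\}$), i.e. $q_{1}\sim_{q_{3}}q_{2}$, and one-sided approach at each endpoint still holds. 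So ``different components reach different endpoints'' is exactly the conclusion $q_{1}\nsim_{q_{3}}q_{2}$, assumed rather than proved. The part you do argue --- that $V_{+}$ and $V_{-}$ lie in different components of $D\backslash\{q_{3}\}$ --- is true but automatic ($V\cap Z_{1}$ and $V\cap Z_{2}$ form a nonempty clopen partition of $V\backslash\{q_{3}\}$ into its two components) and is not where the difficulty lies.

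That this gap cannot be closed by cut-point topology alone is shown by a circle with a ray attached at every point (given the weak topology): this is a connected Hausdorff space in which every point disconnects the space into exactly two components, yet for any three circle points each lies in the component of the complement of either other one containing the third, so betweenness is cyclic. What fails in that space is local flatness itself --- no fixed pair $(a,b)$ is separated by all points in a neighborhood of a circle point. Hence any correct proof of your step 5 must actually use the separating pairs witnessing local flatness of $q_{1},q_{2},q_{3}$, or the t.t.t bound on the number of components, and your sketch never touches either. This is exactly where the paper's proof spends its effort: it proves $F_{ab}=\{x\in F\mid a\nsim_{x}b\}$ is nonempty by an order argument inside an interval around $a$ (lemma \ref{lem:exists-interval-with-order}) combined with the t.t.t finiteness, then uses lemmas \ref{lem:locally-flat-open-sep} and \ref{lem:locally-flat-closed-sep} --- which you never invoke --- to see that $F_{ab}$ is clopen in $F$, hence $F=F_{ab}=F_{ac}=F_{bc}$ by connectedness, and finally pigeonholes an interior point of $F$ (whose complement has only two components) against the three boundary points. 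Your skeleton could be salvaged, but only by supplying for the betweenness statement an argument of essentially this depth; as written, the proposal has a genuine gap at its central step.
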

\begin{proof}
  Assume for contradiction that $F\subset D$ is a definable closed
  connected subset and that $a,b,c\in bd(F)$.

  Let $F_{ab}$ denote the set of points $x\in F$ such that $a\nsim_{x}b$.
  \begin{claim*}
    $F_{ab}\neq\emptyset$.\end{claim*}
  \begin{proof}
    By lemma \ref{lem:exists-interval-with-order}, there exists a pair
    of points $x$ and $y$ in $D$ such that $a\in I(x,y)\subset D$,
    for every point $z\in I(x,y)$ we have $x\nsim_{z}y$ and $<_{a,I(x,y)}$
    defines a dense linear order on $I(x,y)$. 

    Let $s$ and $t$ be points in $I(x,y)$ such that $s<_{a,I(x,y)}a<_{a,I(x,y)}t$.
    By lemma \ref{lem:subinterval-in-interval}, \[
    \{z\in I(a,b)\vert s<_{a,I(x,y)}z<_{a,I(x,y)}t\}=I(s,t)\]
    so we can use the notion $I(s,t)$ to represent the interval given
    by the order $<_{a,I(x,y)}$. We'll use the result throughout the
    proof of the claim.

    Since $I(s,t)=I(s,a)\cup\{a\}\cup I(a,t)$ and $a\in bd(U)$, we can
    assume without a loss of generality that for every point $v\in I(a,t)$
    there exists a point $u\in I(a,v)$ such that $u\in F$. Therefore,
    there exists some point $u\in I(a,t)$ such that $I(a,u)\subset F$.
    Because assume for contradiction that for every $u\in I(a,t)$ there
    existed some $v\in I(a,u)$ such that $v\notin F$. In that case,
    the definable set $I(a,t)\cap F$ would have an infinite number of
    connected components which is a contradiction to $M$ being t.t.t.

    Now, since $a\notin int(F)$, for every point $u\in I(s,a)$ there
    must be some point $v\in I(u,a)$ such that $v\notin F$. Similarly
    to above, this means that there exists some point $v\in I(s,a)$ such
    that $I(v,a)\cap F=\emptyset$.

    Together, this means that without loss of generality we can assume
    that $I(x,a)\cap F=\emptyset$ and $I(a,y)\subset F$.

    If $b\in I(a,y)$ then for any $u\in I(a,b)\subset I(a,y)\subset F$,
    $a\nsim_{u}b$ and so $u\in F_{ab}$.

    Let's assume that $b\notin I(a,y)$. 

    Let $A_{1}$ and $A_{2}$ be a clopen partition of $M_{t}\backslash\{a\}$
    such that $x\in A_{1}$ and let $X_{1}$ and $X_{2}$ be a clopen
    partition of $M_{t}\backslash\{x\}$ such that $a\in X_{1}$. By the
    choice of $A_{1}$ and $X_{1}$, $I(x,a)=A_{1}\cap X_{1}$. Furthermore,
    since $I(x,a)\cap F=\emptyset$ and $U$ is connected, from lemma
    \ref{lem:disjoint-components} it follows that $F\subset\overline{A_{2}}$
    or $F\subset\overline{X_{2}}$. But $I(a,y)\subset A_{2}$ and $I(a,y)\subset F$
    so it must be that $F\subset\overline{A_{2}}=A_{2}\cup\{a\}$. This
    means that $b\in A_{2}$.

    Now, let $u$ be some point in $I(a,y)\subset F$. Let $U_{1}$ and
    $U_{2}$ be a clopen partition of $M_{t}\backslash\{u\}$ such that
    $a\in U_{1}$. Assume for contradiction that $b\in U_{1}$. Since
    $b\in A_{2}$ this means that $b\in A_{2}\cap U_{1}$. But $a\in U_{1}$
    and $u\in A_{2}$ which means that $I(a,u)=A_{2}\cap U_{1}$. This
    implies that $b\in I(a,u)\subset I(a,y)$ which is a contradiction
    to our assumption. Therefore, $b\in U_{1}$ and $a\in U_{2}$ which
    means that $u\in F_{ab}$.

    This concludes the proof of the claim.
  \end{proof}

  By lemmas \ref{lem:locally-flat-open-sep} and \ref{lem:locally-flat-closed-sep},
  $U_{ab}$ is clopen. Therefore, $U=U_{ab}$. 

  Similarly, if $U_{ac}$ and $U_{bc}$ are defined in the analogous
  fashion, $U=U_{ac}=U_{bc}=U_{ab}$. We'll now show that this is a
  contradiction.

  Let's choose a point $x\in int(U)$. Let $X_{1}$ and $X_{2}$ be
  the two connected components of $D\backslash\{x\}$. Either $X_{1}$
  or $X_{2}$ will contain two out of $a$, $b$, and $c$. Without
  loss of generality, $a,b\in X_{1}$. However, since $x\in U_{ab}$,
  $a\nsim_{x}b$ which is clearly a contradiction.
\end{proof}

We're now ready to show that every locally flat set is o-minimal.
In order to do this, we'll extend our previous notion of order from
neighborhoods of locally flat points to locally flat sets.

Let $D$ be a definable open connected locally flat set. Let $a\in D$
be some arbitrary point which we'll think of as the center. In addition,
let $D_{+}$ and $D_{-}$ be the two connected components of $D\backslash\{a\}$
which we'll think of as the {}``positive side'' and the {}``negative
side''. Finally let $x,y\in D$. We say that $x<_{a,D}y$ if one
of the following holds:
\begin{itemize}
\item $x,y\in D_{+}$ and $a\nsim_{x}y$
\item $x,y\in D_{-}$ and $a\nsim_{y}x$
\item $y\in D_{+}$ and $x\in D_{-}$
\item $y=a$ and $x\in D_{-}$
\item $x=a$ and $y\in D_{+}$
\end{itemize}
By proposition \ref{prop:definable-relation}, $<_{a,D}$ is definable. 

The next proposition shows that $<_{a,D}$ defines a dense linear
order on $D$ such that the induced interval topology is equivalent
to the topology induced by $M_{t}$.
\begin{prop}
  Let $M$ be a 1-dimensional connected $\omega$-saturated t.t.t structure
  such that for each point $x\in M_{t}$, $d_{M}(M_{t}\backslash\{x\})>1$.
  Let $D\subset M_{t}$ be a connected open definable subset which is
  locally flat and let $a$ be some point in $D$. Then $<_{a,D}$ defines
  a dense linear order on $D$ such that the induced interval topology
  is equivalent to the topology induced by $M_{t}$.\label{prop:locally-flat-set-o-minimal}\end{prop}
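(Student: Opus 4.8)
The plan is to verify the three order axioms (totality/antisymmetry, transitivity), density, and topological equivalence, leveraging the local results already established. The key observation is that $<_{a,D}$ is built from the same recipe as the local orders $<_{x_0,U}$, so the strategy is to reduce every global statement to a statement inside a small interval $I(c,d)$ where Lemma~\ref{lem:exists-interval-with-order} already gives us a dense linear order, and then glue.

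\begin{proof}
First I would check that $<_{a,D}$ is a linear order. Totality and antisymmetry are immediate from the definition together with the fact that for points $x,y$ on the same side, exactly one of $a\sim_x y$, $a\nsim_x y$ holds, and when they lie on opposite sides or one equals $a$ the comparison is forced. The only real work is transitivity. Given $x<_{a,D}y<_{a,D}z$, the cases where the three points straddle $a$ or coincide with $a$ are trivial or reduce to fewer points; the essential case is $x,y,z\in D_+$ (and its mirror). Here I would argue exactly as in the proof of Proposition~\ref{prop:exist-set-with-order}, case (3): writing $X_a,X_b$ etc.\ for the components of $M_t\backslash\{x\}$ separating the side-witnesses, the hypotheses translate into membership relations that Lemma~\ref{lem:disjoint-components} forces to propagate, yielding $x<_{a,D}z$. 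The point is that transitivity is a purely local phenomenon, so it suffices to embed $x,y,z$ in a single interval $I(c,d)\subset D$ via Lemma~\ref{lem:exists-interval-with-order} and invoke Lemma~\ref{lem:subinterval-in-interval}, which guarantees that $<_{a,D}$ restricted to $I(c,d)$ agrees with the already-verified order $<_{x_0,I(c,d)}$.

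Next I would establish density. Given $x<_{a,D}y$, I would choose an interval $I(c,d)\subset D$ containing both $x$ and $y$ with a dense order (Lemma~\ref{lem:exists-interval-with-order}), where density of $<_{x_0,I(c,d)}$ is already proved; since the two orders coincide on $I(c,d)$ by Lemma~\ref{lem:subinterval-in-interval}, a witness $z$ with $x<z<y$ exists and lies in $D$.

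The main obstacle, and the step deserving the most care, is the topological equivalence of the interval topology with the subspace topology from $M_t$. For this I would show that the order intervals are exactly the sets $I(c,d)\cap D$ and that these form a basis. In one direction, Lemma~\ref{lem:subinterval-in-interval} identifies an order-open interval $(c,d)$ with the definable open connected set $I(c,d)$, and Lemma~\ref{lem:interval-lemma}(1) confirms $I(c,d)\cap D$ is open in $M_t$; thus order-open sets are $M_t$-open. Conversely, to see that every $M_t$-open neighborhood of a point contains an order-interval around it, I would use Lemma~\ref{lem:exists-interval-with-order} to produce, inside any given open $U\ni x$, an interval $I(c,d)$ with $x\in I(c,d)\subset U$ carrying the dense order; this $I(c,d)$ is an order-open neighborhood of $x$. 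The subtlety is handling endpoints and the center $a$ uniformly, and ensuring that the ``separation'' data defining $<_{a,D}$ at $x$ matches the local data on $I(c,d)$ — here the gluing lemmas \ref{lem:locally-flat-closed-sep} and \ref{lem:locally-flat-open-sep} are what make the local order genuinely agree with the global one, so I would cite them to close the argument. Finally, that the order has no endpoints and hence the structure is o-minimal in the induced order follows since $D$ is open and connected with every point having two sides.
\end{proof}
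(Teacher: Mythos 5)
There is a genuine gap, and it sits exactly at the hardest point of the argument. You dismiss totality as ``immediate from the definition together with the fact that for points $x,y$ on the same side, exactly one of $a\sim_{x}y$, $a\nsim_{x}y$ holds.'' But totality is not a statement about one relation and its negation: it says that if $a\sim_{x}y$ (i.e.\ $x\nless_{a,D}y$) then $a\nsim_{y}x$ (i.e.\ $y<_{a,D}x$), which relates the component structure of $D\backslash\{x\}$ to that of $D\backslash\{y\}$ for two arbitrary, possibly ``far apart'' points of $D_{+}$. In the local Proposition \ref{prop:exist-set-with-order} this step was carried by the flatness witnesses $a,b$ valid on all of $V$; on a general locally flat $D$ no such global witnesses exist, and the paper instead proves totality by forming the closed set $F=(D_{+}\cap X_{1}\cap Y_{1})\cup\{a\}\cup\{x\}\cup\{y\}$, showing it is connected with $bd(F)=\{a,x,y\}$, and contradicting Lemma \ref{lem:locally-flat-connected-boundary-not-bigger-2} (no connected closed subset of a locally flat set has more than two boundary points). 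Your proposal never invokes that lemma, and nothing in it substitutes for this step.

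The mechanism you offer instead --- ``embed $x,y,z$ in a single interval $I(c,d)\subset D$ via Lemma \ref{lem:exists-interval-with-order} and invoke Lemma \ref{lem:subinterval-in-interval}'' --- does not work as stated, for two reasons. First, Lemma \ref{lem:exists-interval-with-order} produces an interval around a \emph{single} prescribed point, with no control over which other points it captures; there is no result in the paper placing two or three prescribed points of $D$ in a common well-ordered interval, and obtaining one is essentially equivalent to the globalization problem this proposition is solving. Second, the claim that $<_{a,D}$ restricted to such an interval agrees with the local order is circular: that agreement (the identity $I(x,y)\cap D=\{z\in D\mid x<_{a,D}z<_{a,D}y\}$) is derived in the paper only \emph{after} $<_{a,D}$ is known to be a linear order, and Lemma \ref{lem:subinterval-in-interval} compares an interval's order with a subinterval's, not with $<_{a,D}$. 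The same objection applies to your density argument (the paper instead gets density directly: $x<_{a,D}y$ forces $y\in X_{2}$, $x\in Y_{1}$, and $I(x,y)=X_{2}\cap Y_{1}\neq\emptyset$ by Lemma \ref{lem:interval-lemma}). Your treatment of antisymmetry and transitivity by the disjoint-components argument, and of the topology comparison via Lemmas \ref{lem:interval-lemma} and \ref{lem:exists-interval-with-order}, is essentially the paper's and is fine; but without a correct proof of totality the proposal does not establish the proposition.
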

\begin{proof}
  Let $D_{+}$, $D_{-}$ be the sets used in the definition of $<_{a,D}$
  above. 

  Let $x,y,z\in D$. In addition, let $X_{1}$ and $X_{2}$ be the connected
  components of $D\backslash\{x\}$ such that $a\in X_{1}$. $Y_{1}$,
  $Y_{2}$, $Z_{1}$ and $Z_{2}$ are defined analogously for $y$ and
  $z$.
  \begin{enumerate}
  \item $x\nless_{a,D}y\Rightarrow y<_{a,D}x$:

    We assume that $x,y\in D_{+}$. The other possibilities are either
    identical or trivial. By the assumption, $y\in X_{1}$. Assume for
    contradiction that $x\in Y_{1}$. Since $a\in X_{1}\cap Y_{1}$, by
    lemma \ref{lem:disjoint-components} both $X_{2}$ and $Y_{2}$ are
    subsets of $D_{+}$. 

    Let's define \[
    F=(D_{+}\cap X_{1}\cap Y_{1})\cup\{a\}\cup\{x\}\cup\{y\}\]
    . By lemma \ref{lem:disjoint-components} $F$ is closed and $bd(F)\subset\{a,x,y\}$.
    We'll now show that $F$ is connected and that $bd(F)=\{a,x,y\}$. 

    Let $C$ be some connected component of $F$. Since $D$ is connected,
    $bd(C)=\{a,x,y\}$. For assume for contradiction that one of the points
    in the set $\{a,x,y\}$ was not included in $bd(C)$. Without loss
    of generality, let's assume that $bd(C)=\{x,y\}$. Since $bd(C)\subset\{a,x,y\}$,
    $bd(X_{2})=\{x\}$ and $bd(Y_{2})=\{y\}$, it follows that $C\cup X_{2}\cup X_{1}\cup\{x\}\cup\{y\}$
    is a clopen subset of $D$ which is clearly a contradiction. Assuming
    that $bd(C)$ is equal to some other strict subset of $\{a,x,y\}$
    gives a similar contradiction.

    In addition, $a$, $x$ and $y$ are boundary points of $D_{-}$,
    $X_{2}$ and $Y_{2}$ respectively so by lemma \ref{lem:flat-two-components},
    each one of $a$, $x$, and $y$ is the boundary point of at most
    one connected component of $F$. Therefore, $F$ has only one connected
    component and $bd(F)=\{a,x,y\}$.

    However, this is a contradiction to lemma \ref{lem:locally-flat-connected-boundary-not-bigger-2}.

  \item $x<_{a,D}y\Rightarrow y\nless_{a,D}x$:

    Also in this case we'll assume that $x,y\in D_{+}$. Since $x<_{a,D}y$
    and $a\in X_{1}$, we get that $y\in X_{2}$. Now, we claim that $x\in Y_{1}$.
    For otherwise, if it was true that $x\in Y_{2}$ then we'd get from
    lemma \ref{lem:disjoint-components} that $Y_{1}\cap X_{1}=\emptyset$
    which is a contradiction to the fact that $a\in X_{1}\cap Y_{1}$.

  \item $x<_{a,D}y\wedge y<_{a,D}z\Rightarrow x<_{a,D}z$:

    According to the assumptions, $a,x\in Y_{1}$, $z\in Y_{2}$, $a\in X_{1}$
    and $y\in X_{2}$. We have to prove that $z\in X_{2}$ as well. But
    again by lemma \ref{lem:disjoint-components}, \[
    y\in X_{2}\wedge x\in Y_{1}\Rightarrow Y_{2}\subset X_{2}\Rightarrow z\in X_{2}\]
    .

  \end{enumerate}
  This shows that $<_{a,D}$ is a linear order. We'll now show that
  $<_{a,D}$ is dense. 

  Let's assume that $x<_{a,D}y$. As we showed above, this means that
  $y\in X_{2}$ and $x\in Y_{1}$. By lemma \ref{lem:interval-lemma},
  $X_{2}\cap Y_{1}=I(x,y)\neq\emptyset$. Let $s$ be some point in
  $I(x,y)$. Since $s\in X_{2}\cap Y_{1}$, it follows from the definition
  of $<_{a,D}$ that $x<_{a,D}s<_{a,D}y$.

  We'll now see that the order topology induced on $D$ by $<_{a,D}$
  is equivalent to the topology on $D$ induced by $M_{t}$.

  As a first step, we note that if $x<_{a,D}y$, then \[
  I(x,y)\cap D=\{z\in D\vert x<_{a,D}z<_{a,D}y\}\]
  . This is immediate from the definitions of $I(x,y)$ and $<_{a,D}$.

  Let $U\subset D$ be an open set in $D$ with $x\in U$. By lemma
  \ref{lem:exists-interval-with-order}, there exists a pair of points
  $s,t\in U$ such that $x\in I(s,t)$ and $I(s,t)\subset U$. Without
  loss of generality, $s<_{a,D}t$.

  Therefore, \[
  x\in\{u\in D\vert s<_{a,D}u<_{a,D}t\}=I(s,t)\cap D\subset U\]
  . The other direction is trivial as $I(x,y)$ is open in $D$ for
  every $x,y\in D$.
\end{proof}
We now obtain our primary result as an immediate consequence of propositions
\ref{prop:finite-not-flat} and \ref{prop:locally-flat-set-o-minimal}.
\begin{thm}
  Let $M$ be a 1-dimensional connected $\omega$-saturated t.t.t structure
  such that for every point $x\in M_{t}$, $d_{M}(M_{t}\backslash\{x\})>1$.
  Then there exists a finite set $X\subset M_{t}$ such that each of
  the finite number of connected components of $M_{t}\backslash X$
  are o-minimal.\label{thm:remove-finite-o-minimal}\end{thm}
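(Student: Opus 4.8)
The plan is to peel off the finitely many non-locally-flat points and then show that each surviving connected component carries a definable dense order making it an o-minimal line. First I would let $X\subset M_{t}$ be the set of points that are \emph{not} locally flat. By Proposition \ref{prop:finite-not-flat} this set is finite, and by the definition of local flatness together with the definability of $\sim$ (Proposition \ref{prop:definable-relation}) it is definable. Hence $M_{t}\backslash X$ is a definable open set, and since in this section $M_{t}$ has no isolated points, Lemma \ref{lem:Let--be}(3) lets me write it as a disjoint union of finitely many definably connected open definable sets $C_{1},\dots,C_{r}$, which are exactly its connected components. Each $C_{j}$ is connected, open, definable, and locally flat, since every one of its points lies outside $X$; thus each $C_{j}$ satisfies the hypotheses of Proposition \ref{prop:locally-flat-set-o-minimal}.

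Next, for each component $C=C_{j}$ I would fix a point $a\in C$ and invoke Proposition \ref{prop:locally-flat-set-o-minimal} to obtain a definable dense linear order $<_{a,C}$ on $C$ whose order topology coincides with the subspace topology inherited from $M_{t}$. It then remains to deduce that $C$, equipped with this definable order and with the sets definable in $M$, is o-minimal; that is, that every subset of $C$ definable in $M$ is a finite union of points and $<_{a,C}$-intervals, where intervals are allowed to be unbounded or to be all of $C$.

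Finally I would establish this combinatorial statement from the t.t.t hypothesis. Given a definable $A\subset C$, Lemma \ref{lem:Let--be}(3) decomposes $A$ as $(\bigcup_{i=1}^{m}A_{i})\cup Y$ with the $A_{i}$ pairwise disjoint definably connected open sets and $Y$ finite. The heart of the argument, and the step I expect to require the most care, is showing that a definably connected open subset $O\subset C$ must be a $<_{a,C}$-interval: if it were not, there would be $p<_{a,C}q$ in $O$ and some $r\notin O$ with $p<_{a,C}r<_{a,C}q$, and then $\{s\in O\mid s<_{a,C}r\}$ and $\{s\in O\mid r<_{a,C}s\}$ would be nonempty disjoint sets, each open in the order topology and hence in the $M_{t}$-topology, whose union is $O$, contradicting its connectedness. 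Granting this, each $A_{i}$ is an interval, so $A$ is a finite union of intervals and of the finitely many points of $Y$, which is precisely o-minimality of $C$. Applying the same reasoning to every component $C_{1},\dots,C_{r}$ yields the theorem.
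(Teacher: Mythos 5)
Your first two steps coincide exactly with the paper's own proof: you remove the finite definable set $X$ of points that are not locally flat (Proposition \ref{prop:finite-not-flat}, with definability coming from Proposition \ref{prop:definable-relation}), observe that each of the finitely many connected components $C$ of $M_{t}\backslash X$ is a connected open definable locally flat set, and apply Proposition \ref{prop:locally-flat-set-o-minimal} to obtain a definable dense linear order $<_{a,C}$ on $C$ inducing its topology. Where you diverge is the final step: the paper at this point simply cites \cite[6.2]{P1} (a one-dimensional t.t.t structure whose topology is induced by a definable dense linear order is o-minimal), whereas you attempt to prove o-minimality of $C$ directly from the t.t.t hypothesis.

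That direct argument has a genuine gap. What your clopen-partition argument actually establishes is that every definably connected open $O\subset C$ is \emph{convex}: the negation you work with --- the existence of $p<_{a,C}q$ in $O$ and $r\notin O$ with $p<_{a,C}r<_{a,C}q$ --- is the negation of convexity, not of being an interval. A convex definable set need not be an interval in the sense required for o-minimality, because its infimum and supremum may simply fail to exist in $C\cup\{\pm\infty\}$; nothing in your argument gives completeness of the order $<_{a,C}$. This is precisely the distinction between weak o-minimality (finite unions of points and convex sets) and o-minimality, so as written your proof only yields weak o-minimality of $C$. The gap is fixable by using the definable connectedness of $C$ once more: if $O\subset C$ is definable, convex, nonempty, bounded above in $C$ but without a supremum, then $O$ has no maximum, so the set $U=\{x\in C\mid x>_{a,C}s\text{ for all }s\in O\}$ of upper bounds is definable and has no minimum, hence is open in the order topology, while its complement $\{x\in C\mid x\leq_{a,C}s\text{ for some }s\in O\}$ is open as well; both are nonempty, so they form a definable clopen partition of $C$, contradicting its connectedness. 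With this addition (and its mirror image for infima), each convex piece $A_{i}$ in your decomposition really is an interval with endpoints in $C\cup\{\pm\infty\}$ and your argument goes through; alternatively, one can avoid the issue entirely, as the paper does, by quoting \cite[6.2]{P1}.
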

\begin{proof}
  Let $X$ be the definable set of points in $M_{t}$ which aren't locally
  flat. By proposition \ref{prop:finite-not-flat}, $X$ is finite.
  Let $D$ be a connected component of $M_{t}\backslash X$. Since there're
  only a finite number of connected components, $D$ is a connected
  open definable subset which is locally flat. By proposition \ref{prop:locally-flat-set-o-minimal},
  there exists a definable dense linear order which induces the topology
  on $D$. By \cite[6.2]{P1}, this means that $D$ is o-minimal.\end{proof}
\begin{rem*}
  Note that even if $M_{t}$ isn't connected, we can obtain theorem
  \ref{thm:remove-finite-o-minimal} for any open connected definable subset $D\subset M_t$
  with the property that removing any point from $D$ splits $D$ into more than
  one connected component.\end{rem*}
\begin{example}
  Let's return to the structure $R_{int}=\langle\mathbb{R},I(x,y,z)\rangle$
  from example \ref{exa:interval-example}. By theorem \ref{thm:remove-finite-o-minimal},
  we should be able to recover the standard order $<$ on $\mathbb{R}$
  from $I$.

  Let $D=\mathbb{R}$ and let $a$ be some point from $\mathbb{R}$.
  In addition, let $x$ and $y$ be points in $\mathbb{R}$ such that
  $a<x<y$. By the construction of, $<_{a,D}$ it's clear that $a<_{a,D}x<_{a,D}y$.
  By checking the other possibilities for $x$ and $y$ in a similar
  fashion it's easy to see that $<_{a},D$ is equivalent to $<$.
\end{example}

\section{Structures Without Splitting\label{sec:Structures-Without-Splitting}}

In this section we look at structures where removing a point doesn't
split the structure into more than one connected component. One example
of such a structure is the unit circle. Our main goal in this section
will be to find alternative topological properties which ensure that
the structure is at least locally o-minimal as in the case of the
unit circle.
\begin{lem}
  Let $M$ be an $\omega$-saturated one dimensional t.t.t structure.
  Let $A$ be a definable open set, $f:A\rightarrow\mathcal{P}(M_{t})$
  a function such that $f(x)$ is finite for each point $x\in M_{t}$
  and $\Gamma$ the graph of $f$. Then for each point $x\in A$, the
  fiber $(\overline{\Gamma})_{x}$ is finite.\label{lem:finite-fiber}\end{lem}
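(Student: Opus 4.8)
The plan is to argue by contradiction and to reduce the whole statement to the impossibility of a definable relation with uniformly finite vertical fibres being dense in an open box. First I would record two preliminary facts. Since $\Gamma$ is definable and $(M,\phi)$ satisfies (A), the closure $\overline{\Gamma}$ is again definable, so for each fixed $x_{0}$ the fibre $(\overline{\Gamma})_{x_{0}}\subseteq M_{t}$ is a definable subset of $M_{t}$; by Lemma \ref{lem:Let--be} (together with the consequence recorded in the following remark, that a definable set is finite exactly when it has empty interior, after discarding the finitely many isolated points) it therefore suffices to show that $(\overline{\Gamma})_{x_{0}}$ has empty interior. Second, because each fibre $\Gamma_{x}=f(x)$ is finite its number of definably connected components equals its cardinality, so Proposition \ref{prop:bounded-connected} applied to the formula defining $\Gamma$ yields a single $N\in\mathbb{N}$ with $|f(x)|\le N$ for all $x\in A$. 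This uniform bound is the quantity I will contradict.

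The engine of the proof is the following observation, which I would isolate as the endgame. Suppose $V\subseteq A$ is a definably connected open set and $W\subseteq M_{t}$ is open such that $\Gamma$ is dense in the box $V\times W$. Choose $N+1$ pairwise disjoint nonempty basis sets $W_{0},\dots,W_{N}\subseteq W$. For each $i$ the set $\Gamma_{W_{i}}=\{x\in V:f(x)\cap W_{i}\ne\emptyset\}$ is definable, and density of $\Gamma$ in the box forces $\Gamma_{W_{i}}$ to be dense in $V$ (any basis $V'\subseteq V$ gives an open box $V'\times W_{i}$ meeting $\Gamma$). A dense definable subset of $V$ has complement in $V$ with empty interior, hence finite by Lemma \ref{lem:Let--be}; thus each $\Gamma_{W_{i}}$ is \emph{cofinite} in $V$. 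Then $\bigcap_{i=0}^{N}\Gamma_{W_{i}}$ is cofinite in $V$, in particular nonempty, and any $x$ in it satisfies $f(x)\cap W_{i}\ne\emptyset$ for all $i$, so $|f(x)|\ge N+1$, contradicting the uniform bound. Hence no such box exists.

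It remains to connect an infinite fibre of $\overline{\Gamma}$ to such a box, and this is the step I expect to be the main obstacle. Assuming $(\overline{\Gamma})_{x_{0}}$ is infinite, it contains a basis set $W_{0}$, and after shrinking I may assume $\overline{W_{0}}\cap f(x_{0})=\emptyset$; the defining property of the closure then gives $x_{0}\in\overline{\Gamma_{W}}$ for every basis $W\subseteq W_{0}$. The difficulty is that this only yields accumulation of the sets $\Gamma_{W}$ at the \emph{single} point $x_{0}$, whereas the cofiniteness argument needs genuine density of $\Gamma$ throughout a box, i.e.\ accumulation over a whole neighbourhood. To bridge this gap I would study the definable bad set $B=\{x\in A:(\overline{\Gamma})_{x}\text{ is infinite}\}$. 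If $B$ has nonempty interior I would use $\omega$-saturation, against the uniformly bounded component count of the family $\{(\overline{\Gamma})_{x}\}$ from Proposition \ref{prop:bounded-connected}, to find one basis set $W$ with $W\subseteq(\overline{\Gamma})_{x}$ for all $x$ in a small open $V\subseteq B$, producing exactly the box $V\times W$ above. If instead $B$ is finite, I would localize at the isolated bad point $x_{0}$ and argue that an infinite cluster fibre forces, for a suitable basis $W\subseteq W_{0}$, the definable set $\Gamma_{W}$ to have infinitely many definably connected components accumulating at $x_{0}$ — the fibre values must leave and re-enter $W$ infinitely often as one approaches $x_{0}$ — contradicting the t.t.t hypothesis via Lemma \ref{lem:Let--be}. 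Making this oscillation-to-components step rigorous, and thereby ruling out an infinite cluster fibre that is approached without density, is the technical heart of the argument, since on the germ at $x_{0}$ one has only the finiteness of the component count, not yet any linear order to count crossings against.
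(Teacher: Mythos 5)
Your ``engine'' is correct and cleanly argued: with a uniform bound $|f(x)|\le N$ (which you rightly extract from Proposition \ref{prop:bounded-connected}), the graph $\Gamma$ cannot be dense in any open box $V\times W$, since the $N+1$ definable sets $\{x\in V: f(x)\cap W_i\neq\emptyset\}$ would each be cofinite in $V$ and a common point would violate the bound. The genuine gap is that you never produce such a box, and you say so yourself. Neither bridging step works as stated. In the case where $B=\{x: (\overline{\Gamma})_x \text{ infinite}\}$ has interior, $\omega$-saturation cannot ``find one basis set $W$ with $W\subseteq(\overline{\Gamma})_x$ for all $x$ in a small open $V$'': saturation gives uniform \emph{numerical} bounds across a definable family, but it does not produce a single \emph{witness} serving every member of the family --- that would require realizing a type over the infinitely many basis parameters, which $\omega$-saturation does not license, and there is no Baire-category substitute here. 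In the isolated-bad-point case, the ``oscillation-to-components'' step is exactly the missing content, as you acknowledge: a cluster point alone, with no order, does not let you convert ``$f$ leaves and re-enters $W$ infinitely often'' into infinitely many definable clopen pieces of a definable set.

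The paper bridges precisely this gap, and it does so without ever needing density: it stays at the single point $x$ and counts against \emph{boundaries of neighborhoods} of $x$ instead of against points of an interval. By saturation fix $N$ with $|f(z)|\le N$ for all $z$ and $B$ with $|bd(W)|\le B$ for every basis set $W$. Given cluster values $y_1,\dots,y_{NB+1}$ of the fiber, choose pairwise disjoint basis sets $V_i\ni y_i$. The key claim is that for each $i$ there is a neighborhood $U$ of $x$ such that \emph{every} basis set $W\subseteq U$ containing $x$ satisfies $f(bd(W))\cap V_i\neq\emptyset$. Otherwise one builds a descending chain of neighborhoods $W_1\supsetneq W_2\supsetneq\cdots$ of $x$ whose boundaries all avoid $X_i=f^{-1}(V_i)\cap A$; since $X_i$ accumulates at $x$ (this is what $(x,y_i)\in\overline{\Gamma}$ means), the annuli $W_j\setminus W_{j+1}$ cut $X_i$ into infinitely many nonempty definable clopen pieces, contradicting t.t.t. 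This chain-of-annuli device is the rigorous replacement for your crossing count: the separators are definable boundaries, not order crossings. Then any basis set $W\ni x$ small enough to lie in all the $U_i$ gives $|f(bd(W))|\le NB$ points meeting $NB+1$ pairwise disjoint sets --- a contradiction. If you rebuild your argument around this pointwise counting step, your box lemma, though correct, becomes unnecessary.
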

\begin{proof}
  Let's assume for contradiction that there exists a point $x\in A$
  and a sequence of points $(y_{i})_{i<\omega}$ in $M_{t}$ such that
  for every $i<\omega$, every basis set $U\subset A$ containing $x$
  and every basis point $V$ containing $y_{i}$, there exists some
  $z\in U\backslash\{x\}$ such that $f(z)\in V$.\end{proof}
\begin{claim*}
  For every $i<\omega$ and every basis set $V$ containing $y_{i}$
  there exists a basis set $U\subset A$ containing $x$ such that for
  every basis set $W\subset U$ containing $x$ we have $f(bd(W))\cap V\neq\emptyset$.\end{claim*}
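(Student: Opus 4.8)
The plan is to reduce the claim to a statement about a single definable set and then exploit the component decomposition guaranteed by Lemma~\ref{lem:Let--be}. Fix $i<\omega$ and a basis set $V$ containing $y_{i}$, and set
$S=\{z\in A\mid f(z)\cap V\neq\emptyset\}$, reading the relation ``$f(z)\in V$'' in the hypothesis as ``$f(z)\cap V\neq\emptyset$''. Since $\Gamma$ is definable, $S=\pi_{1}(\Gamma\cap(A\times V))$ is definable. With this notation the standing hypothesis says exactly that for every basis set $U\subset A$ containing $x$ there is a point $z\in U\backslash\{x\}$ lying in $S$; that is, $x\in\overline{S\backslash\{x\}}$, so $x$ is a genuine limit point of $S$ (the role of ``$\backslash\{x\}$'' being to discard the finitely many $y_{i}$ that may happen to lie in $f(x)$).

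Next I would invoke Lemma~\ref{lem:Let--be}(3) to write $S=(\bigcup_{k=1}^{m}S_{k})\cup Y$ with the $S_{k}$ pairwise disjoint connected open definable sets and $Y$ finite. A finite set has no limit points, so $x$ is a limit point of $\bigcup_{k}S_{k}$; as there are only finitely many components, some fixed $S_{j}$ is met by arbitrarily small neighborhoods of $x$, i.e. $x\in\overline{S_{j}}$. Moreover $x$ is not isolated in $M_{t}$ (it is a limit point of $S\backslash\{x\}$), so this $S_{j}$ necessarily contains a point $p\neq x$: either $x\notin S_{j}$, in which case every point of the nonempty set $S_{j}$ differs from $x$, or $x\in S_{j}$, in which case the open set $S_{j}$ containing the non-isolated point $x$ is infinite.

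The heart of the argument is then a Hausdorff-plus-connectedness separation step. Using that $M_{t}$ is Hausdorff, I would choose a basis set $U\subset A$ with $x\in U$ and $p\notin\overline{U}$, and claim this $U$ witnesses the statement. Let $W\subset U$ be an arbitrary basis set containing $x$. On one hand $W\cap S_{j}\neq\emptyset$, since $x\in\overline{S_{j}}$ and $W$ is a neighborhood of $x$; on the other hand $p\in S_{j}$ lies in the open set $M_{t}\backslash\overline{W}$, because $p\notin\overline{U}\supseteq\overline{W}$. If $bd(W)\cap S_{j}$ were empty, then $W$ and $M_{t}\backslash\overline{W}$ would cut $S_{j}$ into two nonempty relatively clopen pieces, contradicting the connectedness of $S_{j}$. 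Hence there is a point $w\in bd(W)\cap S_{j}\subset S$, and by the definition of $S$ we get $f(w)\cap V\neq\emptyset$, so $f(bd(W))\cap V\neq\emptyset$, as required. (One may note in passing that $bd(W)$ is finite by Lemma~\ref{lem:Let--be}(4), though finiteness is not used here.)

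The step I expect to be the main obstacle is not the separation argument, which is routine once set up, but the preparatory bookkeeping: correctly translating the hypothesis into $x\in\overline{S\backslash\{x\}}$, and then producing a \emph{single} connected component $S_{j}$ with $x\in\overline{S_{j}}$ that genuinely contains a point other than $x$, so that the Hausdorff separation using $p$ can be arranged uniformly for all $W\subset U$.
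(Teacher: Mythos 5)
Your proof is correct, and it takes a genuinely different route from the paper's. The paper argues by contradiction: if every basis set $U\subset A$ containing $x$ contained a basis set $W\ni x$ with $f(bd(W))\cap V=\emptyset$, one could build a descending chain of basis sets around $x$, with $\overline{W_{i+1}}\subsetneq W_{i}$, whose boundaries all miss the definable set $X=f^{-1}(V)\cap A$; the differences $(W_{i}\backslash\overline{W_{i+1}})\cap X$ would then give infinitely many pairwise disjoint definable clopen subsets of $X$, contradicting the fact that $d_{M}(X)<\infty$. You argue directly: decompose $S=X$ by Lemma \ref{lem:Let--be}(3) into finitely many definably connected open pieces plus a finite set, fix a component $S_{j}$ with $x\in\overline{S_{j}}$ together with a point $p\in S_{j}\backslash\{x\}$, choose $U$ with $p\notin\overline{U}$, and observe that for any basis set $W\subset U$ containing $x$ the open sets $W$ and $M_{t}\backslash\overline{W}$ would disconnect $S_{j}$ unless $bd(W)\cap S_{j}\neq\emptyset$. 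Both arguments rest on the same two pillars --- definability of $S$ and finiteness of the number of definable connected components of a definable set --- but your one-step separation replaces the paper's infinite-chain construction, and it is tighter on a point the paper's write-up leaves rough: in the chain argument one must additionally arrange that each annulus actually meets $X$ (say, by forcing $\overline{W_{i+1}}$ to avoid a known point of $W_{i}\cap X$), whereas in your argument the nonemptiness of both sides of the separation is immediate from $x\in\overline{S_{j}}$ and the choice of $p$. One detail you should make explicit: the contradiction is with \emph{definable} connectedness, so the separating pieces $W\cap S_{j}$ and $(M_{t}\backslash\overline{W})\cap S_{j}$ must be definable; this holds because the closure of a definable set is definable in a first order topological structure, and with that remark your proof is complete.
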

\begin{proof}
  Let's take some $i<\omega$. Let $V$ be some basis set containing
  $y_{i}$. Assume for contradiction that for every basis set $U\subset A$
  containing $x$ there exists some basis set $W\subset U$ containing
  $x$ such that $f(bd(W))\cap V=\emptyset$.

  We now define $X=f^{-1}(V)\cap A$. By the definition of $y_{i}$,
  for every basis set $W$ containing $x$, $W\cap(X\backslash\{x\})\neq\emptyset$.
  Therefore, by the assumption for contradiction there exists a descending
  sequence of basis sets $(W_{i})_{i<\omega}$ such that for all $i<\omega$:
  \begin{itemize}
  \item $x\in W_{i}\subset X$
  \item $\overline{W_{i+1}}\subsetneq W_{i}$
  \item $bd(W_{i})\cap X=\emptyset$\end{itemize}
  \begin{proof}
    By the last two properties, for every $i<\omega$ the set $W_{i}\backslash W_{i+1}$
    is clopen in $X$. But this means that $X$ can be partitioned into
    an infinite number of definable clopen subsets which is a contradiction.
  \end{proof}
  Now, by the $\omega$-saturation we can assume that there exists some
  $N<\omega$ such that for every $z\in A$, $\vert f(z)\vert<N$. Similarly,
  there exists some $B<\omega$ such that for every basis set $V$,
  $\vert bd(V)\vert<B$. Let $V_{1},\dots,V_{NB+1}$ be pairwise disjoint
  basis sets such that for every $1\leq i\leq NB+1$ we have $y_{i}\in V_{i}$.
  By the claim, there exists a basis set $U\subset A$ containing $x$
  such that for every basis set $W\subset U$ containing $x$ and every
  every $1\leq i\leq NB+1$ we have $f(bd(W))\cap V_{i}\neq\emptyset$.

  Let $W\subset U$ be some basis set. By the definitions of $N$ and
  $B$, $\vert f(bd(W))\vert\leq NB$ which is a contradiction to the
  fact that $V_{1},\dots,V_{NB+1}$ are pairwise disjoint.\end{proof}
\begin{prop}
  Let $M$ be an $\omega$-saturated one dimensional t.t.t structure
  such that there exist a definable continuous function $F:M_{t}^{2}\rightarrow M_{t}$
  and a point $a\in M_{t}$ such that for each $x\in M_{t}$, $F(x,x)=a$
  and $F(x,\cdot)$ is injective. Let $f:M_{t}\rightarrow\mathcal{P}(M_{t})$
  be a function such that for every $x\in M_{t}$, $\vert f(x)\vert<\infty$
  and $f(x)\neq x$. Let $\Gamma$ be the graph of $f$. Then for every
  basis set $U\subset M_{t}$, there exists a point $x\in U$ such that
  $(x,x)\notin\overline{\Gamma}$.\label{prop:F-function}\end{prop}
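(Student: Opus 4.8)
The plan is to use the function $F$ to replace the diagonal by a horizontal line, so that the statement becomes a question about fibers to which Lemma~\ref{lem:finite-fiber} applies. Consider the map $H:M_{t}^{2}\rightarrow M_{t}^{2}$ given by $H(x,y)=(x,F(x,y))$. Since $F$ is continuous and each $F(x,\cdot)$ is injective, $H$ is a continuous injection, and because $F(x,x)=a$ it sends the diagonal $\{(x,x):x\in M_{t}\}$ onto the line $M_{t}\times\{a\}$. Setting $g(x)=F(x,f(x))=\{F(x,y):y\in f(x)\}$, the image $H(\Gamma)$ is exactly the graph $\Gamma_{g}$ of $g$; moreover $g$ is finite-valued and, since $F(x,y)=a$ forces $y=x$ by injectivity while $x\notin f(x)$, we have $a\notin g(x)$ for every $x$. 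By continuity $H(\overline{\Gamma})\subseteq\overline{\Gamma_{g}}$, so $(x,x)\in\overline{\Gamma}$ implies $(x,a)\in\overline{\Gamma_{g}}$. Hence it suffices to find, in every basis set $U$, a point $x$ with $(x,a)\notin\overline{\Gamma_{g}}$: the reduction has turned the problem into showing that the line $U\times\{a\}$ cannot lie inside $\overline{\Gamma_{g}}$.

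First I would record the two facts that do the work. By Lemma~\ref{lem:finite-fiber} applied to $g$, every vertical fiber $(\overline{\Gamma_{g}})_{x}$ is finite, and by $\omega$-saturation there is a uniform bound $N$ with $|g(x)|<N$ for all $x$. Now suppose for contradiction that $U\times\{a\}\subseteq\overline{\Gamma_{g}}$ for some basis set $U$. For each basis set $V$ containing $a$ and each $x\in U$, a box $W\times V$ around $(x,a)$ must meet $\Gamma_{g}$, so the set $g^{-1}(V)=\{z:g(z)\cap V\neq\emptyset\}$ satisfies $U\subseteq\overline{g^{-1}(V)}$. Since $g^{-1}(V)$ is definable and dense in the open set $U$, the decomposition in Lemma~\ref{lem:Let--be} forces it to have nonempty interior.

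The crux is to convert this ``approaches $a$ everywhere'' information into a genuine value of $g$. I would build a descending sequence of basis sets $V_{1}\supseteq V_{2}\supseteq\cdots$ around $a$ together with a descending sequence of basis sets $U_{1}\supseteq U_{2}\supseteq\cdots$ contained in $U$ with $U_{n}\subseteq\mathrm{int}(g^{-1}(V_{n}))$, so that $g(x)\cap V_{n}\neq\emptyset$ for all $x\in U_{n}$. Using the uniform bound $N$ on $|g(x)|$ and the finiteness of the fibers $(\overline{\Gamma_{g}})_{x}$, together with $\omega$-saturation, I would realise a point $x^{*}$ in the limit of the $U_{n}$ for which the finite set $g(x^{*})$ meets every chosen neighbourhood of $a$; finiteness then forces $a\in g(x^{*})$, i.e.\ $x^{*}\in f(x^{*})$ by injectivity of $F(x^{*},\cdot)$, contradicting the hypothesis $f(x)\neq x$.

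The main obstacle is precisely this last realisation step. Because the finite-valuedness of $g$ runs only in the forward direction, the naive transpose argument -- applying Lemma~\ref{lem:finite-fiber} to $w\mapsto g^{-1}(w)$ -- is unavailable, since $g^{-1}(w)$ need not be finite; this is exactly why Lemma~\ref{lem:finite-fiber} and the uniform bound $N$ are needed to control the approach to $a$ from the forward side. The delicate point is to arrange the sequences $V_{n},U_{n}$ so that a single $\omega$-saturated point sees $a$ in its actual (finite) value set rather than merely in the closure, which forces one to use the finiteness of the fibers to pin the accumulation at $a$ itself rather than at some nearby point lying in all the $V_{n}$.
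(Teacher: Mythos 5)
Your reduction (the map $H(x,y)=(x,F(x,y))$, the induced function $g(x)=\{F(x,y):y\in f(x)\}$, and the two facts $a\notin g(x)$ and $(x,x)\in\overline{\Gamma}\Rightarrow(x,a)\in\overline{\Gamma_{g}}$) is exactly the paper's first step. But from there the proposal has a genuine gap, and it sits precisely at the point you yourself flag as ``the main obstacle.'' After realizing $x^{*}$ with $g(x^{*})\cap V_{n}\neq\emptyset$ for all $n$, the finiteness of $g(x^{*})$ only gives some $b\in g(x^{*})$ lying in $\bigcap_{n}V_{n}$; nothing forces $b=a$. The space $M_{t}$ is merely Hausdorff --- there is no first countability, so a countable descending chain of basis sets around $a$ need not intersect in $\{a\}$ --- and you cannot choose the $V_{n}$ adaptively to exclude $b$, because $b$ is produced (via $\omega$-saturation) only after the entire sequence $(V_{n})$ has been fixed. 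Passing to types in several variables, or invoking the finiteness of the fibers $(\overline{\Gamma_{g}})_{x^{*}}$, does not repair this: each realization may exhibit a different spurious limit point $b\neq a$, and the desired contradiction with $a\notin g(x)$ never materializes. So the ``delicate point'' you defer is not a detail; it is the whole content of the proposition, and the argument as sketched does not close. (A secondary issue: your appeals to the definability of $g^{-1}(V_{n})$, to Lemma \ref{lem:Let--be} for its interior, and to $\omega$-saturation for a type built from conditions on $g$ all presuppose that $f$, hence $g$, is definable, which the hypotheses as stated do not grant.)

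The route you dismiss as ``unavailable'' --- transposing and applying Lemma \ref{lem:finite-fiber} to $w\mapsto g^{-1}(w)$ --- is exactly what the paper does, and the obstruction you cite (infinite preimages) is precisely what it overcomes. Set $h(y)=g^{-1}(y)\cap U$. By the exchange principle, only finitely many points $y$ can have $h(y)$ infinite; moreover $h(a)=\emptyset$, precisely because $a\notin g(x)$ for all $x\in U$, so $a$ is not among these finitely many bad points. Hausdorffness then yields a definable open set $A$ containing $a$ on which $h$ is finite-valued, and Lemma \ref{lem:finite-fiber} applied to $h$ on $A$ shows that the fiber $(\overline{\Gamma_{h}})_{a}$ is finite --- whereas the contradiction hypothesis $(x,a)\in\overline{\Gamma_{g}}$ for all $x\in U$ makes that fiber contain the infinite set $U$. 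The moral is that the transpose is what converts ``the closure passes through $(x,a)$ for every $x\in U$'' (a statement about uncountably many limits, which countable saturation cannot see) into ``a single fiber of a closure is infinite'' (a statement Lemma \ref{lem:finite-fiber} was built to refute); your forward-direction argument has no analogous mechanism.
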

\begin{proof}
  Assume for contradiction that there exists some basis set $U\subset M_{t}$
  such that for every $x\in U$, $(x,x)\in\overline{\Gamma}$. We now
  define the function $g:U\rightarrow \mathcal{P}(M_{t})$ by\[
  g(x)=\{F(x,y)\vert y\in f(x)\}\]
  . 

  In addition, we define the function $h:M_{t}\rightarrow\mathcal{P}(U)$
  by \[
  h(y)=g^{-1}(y)\cap U\]
  . Let $\Gamma_{g}$ and $\Gamma_{h}$ be the graphs of $g$ and $h$
  respectively.

  By our assumption on $F$, $F(x,x)=a$ for each $x\in U$. Therefore,
  by the continuity of $F$ together with the assumption that for every
  $x\in U$, $(x,x)\in\overline{\Gamma}$, we get that $(x,a)\in\overline{\Gamma_{g}}$
  for each $x\in U$.

  Furthermore, since for every $x\in U$ we have $f(x)\neq x$ and $F(x,\cdot)$
  is injective, $g(x)\neq a$ for all $x\in U$.

  We'll now show that there exists an open set $A$ containing $a$
  such that for every $y\in A$, the set $h(y)$ is finite. By our assumptions
  on $f$, $g(x)$ is finite for every $x\in M_{t}$. Therefore, by
  the exchange principle there are a finite number of points $y\in M_{t}$
  such that $h(y)$ is infinite. Furthermore, $h(a)=\emptyset$ so by
  the Hausdorffness of $M_{t}$, there exists an open set $A$ containing
  $a$ such that for every $y\in A$, the set $h(y)$ is finite.

  In addition, since $(x,a)\in\overline{\Gamma_{g}}$ for each $x\in U$,
  the fiber $(\overline{\Gamma_{h}})_{a}$ is infinite. However, this
  is a contradiction to lemma \ref{lem:finite-fiber}.\end{proof}

\begin{prop}
  Let $M$ be an $\omega$-saturated one dimensional t.t.t structure
  such that there exist a definable continuous function $F:M_{t}^{2}\rightarrow M_{t}$
  and a point $a\in M_{t}$ such that for each $x\in M_{t}$, $F(x,x)=a$
  and $F(x,\cdot)$ is injective. 

  Then for every basis set $U$ there exists a basis set $V\subset U$
  such that for every point $x\in V$ there exists a basis set $W$
  such that $bd(W)\cap V=\{x\}$. \label{prop:boundary-only-x}\end{prop}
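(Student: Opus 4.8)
The plan is to deduce the statement directly from Proposition \ref{prop:F-function}, once that result is recast in a more usable form. For any \emph{definable} finite-valued $f:M_t\to\mathcal{P}(M_t)$ with $x\notin f(x)$ and graph $\Gamma$, Proposition \ref{prop:F-function} says that every basis set contains a point $x$ with $(x,x)\notin\overline{\Gamma}$. Hence $G=\{x\in M_t:(x,x)\notin\overline{\Gamma}\}$ is dense, so its complement $E=\{x:(x,x)\in\overline{\Gamma}\}$ has empty interior; being the preimage of the closed set $\overline{\Gamma}$ under the continuous diagonal map it is closed and definable, and a closed definable set with empty interior is finite by part $(1)$ of Lemma \ref{lem:Let--be}. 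Thus \emph{for every such $f$ the exceptional set $E$ is finite}.

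The strategy is then to manufacture an $f$ whose graph encodes boundary data. For each point $x$ I would fix a basis set $W_x$ with $x\in bd(W_x)$, chosen so that $x\mapsto W_x$ arises from a definable assignment of parameters, and set $f(x)=bd(W_x)\setminus\{x\}$. By Lemma \ref{lem:bounded-boundary} every $bd(W_x)$ has at most $K$ elements, so $f$ is finite-valued, and $x\notin f(x)$ by construction. After discarding the finitely many isolated points and the finitely many points that are interior to every basis set containing them (so lie on no basis-set boundary), this $f$ is defined on a cofinite definable set, which is all that is needed.

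Granting such an $f$, the conclusion follows cleanly. Applying the reformulated Proposition \ref{prop:F-function} gives the finite exceptional set $E$. Given a basis set $U$, choose $x_0\in U\setminus E$. Since $(x_0,x_0)\notin\overline{\Gamma}$, there are basis sets $O_1,O_2\ni x_0$ with $(O_1\times O_2)\cap\Gamma=\emptyset$, that is $f(z)\cap O_2=\emptyset$ for every $z\in O_1$. Pick a basis set $V\subseteq U\cap O_1\cap O_2$ containing $x_0$. Then $f(z)\cap V=\emptyset$ for every $z\in V$, so, writing $bd(W_z)=\{z\}\cup f(z)$ and using $z\in bd(W_z)\cap V$, we get $bd(W_z)\cap V=\{z\}$. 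Hence $V\subseteq U$ and, for each $z\in V$, the basis set $W=W_z$ satisfies $bd(W)\cap V=\{z\}$, which is exactly the assertion.

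The main obstacle is the second step: producing a \emph{definable} assignment $x\mapsto W_x$ selecting, for each boundary point $x$, a single basis set having $x$ on its boundary. The relation ``$x\in bd(\phi^{M_t}(\bar b))$'' is definable with uniformly finite fibers by Lemma \ref{lem:bounded-boundary}, so the difficulty is purely one of definable choice. I expect to handle it by restricting to basis sets whose boundary has the minimal cardinality $k(x)=\min\{\,|bd(\phi^{M_t}(\bar b))| : x\in bd(\phi^{M_t}(\bar b))\,\}$ and then using $\omega$-saturation together with the finiteness supplied by Lemma \ref{lem:finite-fiber} to attach a canonical finite set of companions to each $x$. Verifying that this really yields a finite-valued \emph{definable} $f$, rather than merely a type-definable object, is the crux of the argument.
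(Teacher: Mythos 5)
You have correctly identified the crux yourself, and it is exactly the point where your proposal fails: there is no way to carry out the ``definable assignment of parameters'' $x\mapsto W_x$. Nothing in the t.t.t axioms provides definable choice, and the remedies you sketch do not substitute for it. Restricting to basis sets whose boundary has minimal cardinality $k(x)$ achieves nothing: $k(x)$ can be constantly $2$ while infinitely many distinct basis sets have $x$ on their boundary, each contributing a different companion point, so the companion set is still infinite and you still have to choose one set among infinitely many. Lemma \ref{lem:finite-fiber} cannot help, because it \emph{presupposes} a finite-valued function (it bounds fibers of $\overline{\Gamma}$ for a given such $f$); it cannot be used to produce one. And $\omega$-saturation only converts pointwise-finite data into uniformly bounded data; it does not make an infinite companion set finite, nor does it turn a type-definable selection into a definable one. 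So the object your whole argument rests on --- a definable, finite-valued $f$ with $bd(W_x)=\{x\}\cup f(x)$ --- is never constructed, and the methods proposed cannot construct it.

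For comparison, the paper avoids selection entirely. It takes the choice-free, automatically definable relation $f(x)=\{y\neq x \mid \exists\, W \text{ basis with } \{x,y\}\subset bd(W)\}$, the union of \emph{all} companions over \emph{all} basis sets through $x$, which is definable but possibly infinite-valued, and then splits into two cases. If $f$ is finite-valued on some basis subset $V\subset U$, Proposition \ref{prop:F-function} applies and the rest of the argument is essentially the deduction you wrote (this part of your proposal is fine, and matches the paper). If instead \emph{every} basis subset of $U$ contains a point with infinitely many companions, the paper assumes the conclusion fails and runs an induction of length $N$, where $N$ uniformly bounds boundary cardinalities (Lemma \ref{lem:bounded-boundary} plus saturation): it produces points $x_1,\dots,x_N$ and basis sets $V_1,\dots,V_N$ such that $f_i(x_i)$ is infinite, where $f_i$ counts only companions $y$ for which a single basis set has all of $x,x_1,\dots,x_{i-1},y$ on its boundary. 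At stage $N$ this forces a basis set with at least $N+1$ boundary points, a contradiction. This second case is precisely the situation your approach cannot enter --- it is what happens when no finite-valued definable $f$ of your kind is available on any basis subset --- and it is the substantive missing content of your proposal.
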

\begin{proof}
  First of all, without loss of generality we can assume that for every
  $x\in M_{t}$ there is some basis set $W$ such that $x\in bd(W)$.
  Because let $X$ be the set of all such points. $X$ is clearly definable.
  Assume for contradiction that $X^{c}$ has a non empty interior. Let
  $W$ be a basis set such that $\overline{W}\subset X$. Then $bd(W)\subset X$
  which is clearly a contradiction. Therefore, $X^{c}$ is finite.

  We define a function $f:U\rightarrow\mathcal{P}(M_{t})$ by\[
  f(x)=\{y\neq x\vert\textrm{there exists a basis set }W\textrm{ such that }\{x,y\}\subset bd(W)\}\]

  Let $N$ be an integer such that for every basis set $U$, $\vert bd(U)\vert<N$.
  Let $\Gamma$ be the graph of $f$.

  We now look at two cases.

  For the first case let's assume that there exists a basis set $V\subset U$
  such that for each $x\in V$, $\vert f(x)\vert<\infty$. By proposition
  \ref{prop:F-function}, there exists some basis set $W\subset V$
  such that $(W\times W)\cap\Gamma=\emptyset$. This means that for every
  point $x\in W$, there exists a basis set $A$ such that $bd(A)\cap W=\{x\}$.
  Because let $x$ be some point in $W$ and let $A$ be a basis set
  such that $x\in bd(A)$. Since $(W\times W)\cap\Gamma=\emptyset$, the rest
  of the boundary points of $A$ are not contained in $W$ which means
  that $bd(A)\cap W=\{x\}$.

  For the second case, assume that for every basis set $V\subset U$
  there exists some point $x\in V$ such that $f(x)$ is infinite. We
  now assume for contradiction that there doesn't exist a basis set
  $V\subset U$ such that for every point $x\in V$, there exists a
  basis set $W$ such that $bd(W)\cap V=\{x\}$. 

  In order to get a contradiction, we'll inductively build a sequence
  of tuples of points, basis sets and functions $(x_{i},V_{i},f_{i})_{i=1}^{N}$
  with the following properties:
  \begin{itemize}
  \item $V_{1}$ is an arbitrary basis set in $U$, $x_{1}$ is a point in
    $V_{1}$ such that $f(x_{1})$ is infinite and $f_{1}=f$.
  \item For all $1\leq i\leq N$, $f_{i}:U\rightarrow\mathcal{P}(M_{t})$
    is defined by \[
    f_{i}(x)=\{y\neq x,x_{1},\dots,x_{i-1}\vert\textrm{there exists a basis set }W\]
    \[
    \textrm{ such that }\{x,x_{1},\dots,x_{i-1},y\}\subset bd(W)\}\]

  \item For all $1\leq i\leq N$, $x_{i}\in V_{i}$
  \item For all $1\leq i\leq N$, $f_{i}(x_{i})$ is infinite.
  \item For all $i<j$, $x_{i}\notin V_{j}$
  \end{itemize}
  The existence of $(x_{1},V_{1},f_{1})$ follows immediately from our
  assumptions in the second case.

  Let's assume that we've constructed the sequence up to the $i$-th
  place. Since $f_{i}(x_{i})$ is infinite, there exists some basis
  set $V_{i+1}\subset f_{i}(x_{i})$ such that $x_{i}\notin V_{i+1}$.
  We define $f_{i+1}$ as above.

  Now, if for all $x\in V_{i+1}$ the set $f_{i+1}(x)$ would be finite
  then just as in the first case, together with the fact that $i<j\Rightarrow x_{i}\notin V_{j}$,
  there would exist a basis set $W\subset U$ such that for every $y\in W$,
  there exists a basis set $A$ such that $bd(A)\cap W=\{y\}$. Therefore,
  by our assumption for contradiction, there exists some point $x_{i+1}\in V_{i+1}$
  such that $f_{i+1}(x_{i+1})$ is infinite. Thus we've found a tuple
  $(x_{i+1},V_{i+1},f_{i+1})$ satisfying the requirements.

  However, the existence of the tuple $(x_{N},V_{N},f_{N})$ is clearly
  a contradiction because on the one hand $f_{N}(x_{N})$ is infinite
  but by the definition of $N$, for every point $x\in U$ the set $f_{N}(x)$
  is empty.
\end{proof}
We'll now prove a similar proposition under the assumption that all
of the basis sets have only two points in their boundary.
\begin{lem}
  Let $X$ be a topological space and let $U\subset X$ and $V\subset X$
  be connected open sets such that \[
  bd(U)\cap V=bd(V)\cap U=\emptyset\]
  and $U\neq V$. Then $U\cap V=\emptyset$.\label{lem:connected-same-boundary}\end{lem}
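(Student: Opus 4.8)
The plan is to show that $U\cap V$ is simultaneously clopen in $V$ and clopen in $U$, and then to invoke the connectedness of both sets. The key observation driving everything is that, since $U$ is open, we have the disjoint decomposition $\overline{U}=U\cup bd(U)$, so the hypothesis $bd(U)\cap V=\emptyset$ is equivalent to the identity \[ \overline{U}\cap V = U\cap V. \] Symmetrically, $bd(V)\cap U=\emptyset$ gives $\overline{V}\cap U=U\cap V$. These two identities are what convert the boundary hypotheses into a statement about closures.

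First I would verify that $U\cap V$ is clopen in the subspace $V$. It is open in $V$ immediately, since $U$ is open in $X$. For closedness in $V$, I would compute the closure of $U\cap V$ relative to $V$, namely $\overline{U\cap V}\cap V$, and bound it using monotonicity of closure: since $U\cap V\subseteq U$ we have $\overline{U\cap V}\subseteq\overline{U}$, whence \[ \overline{U\cap V}\cap V\subseteq \overline{U}\cap V = U\cap V, \] the last equality being the displayed identity above. The reverse inclusion is trivial, so $U\cap V$ is closed in $V$. This step, turning the boundary condition into closedness of the intersection inside the subspace, is the heart of the argument; the remainder is formal.

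Having shown $U\cap V$ is clopen in $V$, the connectedness of $V$ forces $U\cap V=\emptyset$ or $U\cap V=V$. Running the identical argument with the roles of $U$ and $V$ exchanged (using $bd(V)\cap U=\emptyset$ and the connectedness of $U$) shows that $U\cap V$ is clopen in $U$, hence $U\cap V=\emptyset$ or $U\cap V=U$. If $U\cap V\neq\emptyset$, then the two dichotomies give $U\cap V=V$ and $U\cap V=U$ simultaneously, so $U=V$, contradicting the hypothesis $U\neq V$. Therefore $U\cap V=\emptyset$, as required. I do not anticipate a genuine obstacle here; the only point requiring care is the subspace-closure computation, and one should keep track of the fact that $\overline{U}=U\cup bd(U)$ relies on $U$ being open.
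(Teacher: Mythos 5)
Your proof is correct and takes essentially the same approach as the paper's: showing $U\cap V$ is relatively clopen in each of $U$ and $V$ and invoking connectedness of both. The paper phrases the closedness step contrapositively (a nonempty relative boundary point of $U\cap V$ would land in $bd(V)\cap U$), which is exactly your subspace-closure computation $\overline{U\cap V}\cap V\subseteq\overline{U}\cap V=U\cap V$ in disguise.
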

\begin{proof}
  Let's look at the open set $W=U\cap V$. If $W=\emptyset$ then we're
  finished.

  Let's assume that $W\neq\emptyset$. If $W\neq U$ then since $U$
  is connected, the boundary of $W$ in $U$ must be non-empty. Let $x\in U$
  be a point in $bd(W)$. Since $x\notin W$, $x\notin V$. But $x\in\overline{W}\subset\overline{V}$
  which means that $x\in bd(V)$. This is a contradiction to the fact
  that $x\in U$. Therefore, $W=U$. Similarly, $W=V$. Together this
  means that $U=V$ which is a contradiction to the assumption. \end{proof}
\begin{lem}
  Let $M$ be a t.t.t structure and let $X\subset M_{t}$ be some finite
  subset. Then there are only a finite number of basis sets $U\subset M_{t}$
  such that $bd(U)=X$.\label{lem:finite-basis-sets}\end{lem}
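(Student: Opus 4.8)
The plan is to show that any basis set $U$ with $bd(U)=X$ is forced to be a union of definably connected components of the open definable set $M_{t}\backslash X$, and then to observe that there are only finitely many such unions.

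First I would record the key structural fact: \emph{if $U$ is a basis set with $bd(U)=X$, then $U$ is clopen in the subspace $M_{t}\backslash X$}. Since $U$ is open, $int(U)=U$, so $bd(U)=\overline{U}\backslash U=X$; in particular $X$ is disjoint from $U$, giving $U\subset M_{t}\backslash X$ and $\overline{U}=U\cup X$. Consequently the closure of $U$ taken inside the subspace $M_{t}\backslash X$ is $\overline{U}\cap(M_{t}\backslash X)=(U\cup X)\cap(M_{t}\backslash X)=U$, so $U$ is closed in $M_{t}\backslash X$; as it is also open there, it is clopen in $M_{t}\backslash X$.

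Next I would invoke lemma \ref{lem:Let--be}: the set $M_{t}\backslash X$ is definable and open, so by part $3$ it is a finite union of pairwise disjoint definably connected definable open sets $C_{1},\dots,C_{m}$ together with a finite set $Y$, and the points of $Y$ are isolated in the open set $M_{t}\backslash X$, hence isolated in $M_{t}$. In either case $M_{t}\backslash X$ has only finitely many definably connected components $C_{1},\dots,C_{N}$, each of which is definable and clopen in $M_{t}\backslash X$. A definable clopen subset $U$ of $M_{t}\backslash X$ must then be a union of some of these components: for each $i$, the set $U\cap C_{i}$ is a definable clopen subset of the definably connected set $C_{i}$, hence is either empty or equal to $C_{i}$. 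Therefore $U=\bigcup_{i\in S}C_{i}$ for some $S\subset\{1,\dots,N\}$.

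Finally, since there are at most $2^{N}$ choices of $S$, there are at most $2^{N}$ possible point sets $U$ with $bd(U)=X$, which proves the lemma. The only real content is the first step---recognizing that fixing the boundary to be $X$ forces $U$ to be clopen away from $X$; once that is in hand, the finiteness of the components, guaranteed by the t.t.t hypothesis through lemma \ref{lem:Let--be}, does the rest. I would not even need lemma \ref{lem:connected-same-boundary}, though it provides an alternative route by showing directly that two distinct connected basis sets sharing the boundary $X$ must be disjoint.
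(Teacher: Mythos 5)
Your overall strategy is sound and genuinely different from the paper's. The paper argues by contradiction: it decomposes each basis set in $\mathcal{B}=\{U\mid bd(U)=X\}$ into its definably connected components, uses lemma \ref{lem:connected-same-boundary} to see that components of distinct members of $\mathcal{B}$ are pairwise disjoint, and then observes that the definable set $\bigcup\mathcal{B}$ would admit an infinite partition into definable clopen pieces, contradicting t.t.t. Your argument is direct: boundary equal to $X$ forces $U$ to be clopen in $M_{t}\backslash X$, hence a union of the finitely many definably connected components of $M_{t}\backslash X$, giving an explicit bound of $2^{N}$ (which the paper's argument does not produce). As you note, this bypasses lemma \ref{lem:connected-same-boundary} entirely.

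However, your justification of the middle step --- that $M_{t}\backslash X$ has finitely many definably connected components, each definable and clopen in $M_{t}\backslash X$ --- is flawed as written. First, lemma \ref{lem:Let--be} is stated only for \emph{1-dimensional} t.t.t structures, while the lemma you are proving assumes merely that $M$ is t.t.t; you are importing a hypothesis you do not have. Second, even where it applies, part 3 of that lemma does not deliver what you claim: its pieces $X_{1},\dots,X_{m}$ are open but need not be closed in $M_{t}\backslash X$, and the points of the finite set $Y$ need not be isolated --- a point of $Y$ may lie in the closure of two of the $X_{i}$, gluing them into a single definably connected component (think of $(0,1)\cup\{1\}\cup(1,2)$, whose decomposition in the sense of part 3 has two open pieces and one non-isolated point, yet is itself connected). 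The repair is easy and removes both problems: work directly from the definition of t.t.t. Since $X$ is finite, $M_{t}\backslash X$ is definable, so $N=d_{M}(M_{t}\backslash X)<\infty$; take a partition of $M_{t}\backslash X$ into $N$ disjoint nonempty definable clopen sets $C_{1},\dots,C_{N}$ witnessing this maximum. Maximality forces each $C_{i}$ to be definably connected (otherwise split one of them to get $N+1$ pieces), and then your argument that any definable clopen subset of $M_{t}\backslash X$ --- in particular any basis set with boundary exactly $X$ --- is a union of the $C_{i}$ goes through verbatim, with no dimension assumption. With that substitution your proof is correct and, if anything, sharper than the paper's.
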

\begin{proof}
  Let $\mathcal{B}$ be the set of basis sets $U$ such that $bd(U)=X$.
  Assume for contradiction that $\mathcal{B}$ is infinite. Let $\mathcal{C}$
  be defined by\[
  \mathcal{C}=\bigcup_{U\in\mathcal{B}}\{C\subset M_{t}\vert C\textrm{ is a connected component of }U\}\]
  For each $C\in\mathcal{C}$, $bd(C)\subset X$. In addition, since
  $\mathcal{B}$ is infinite, $\mathcal{C}$ is infinite as well. However,
  by lemma \ref{lem:connected-same-boundary}, for each pair of connected
  components $C_{1},C_{2}\in\mathcal{C}$, $C_{1}\cap C_{2}=\emptyset$.
  Therefore, the definable set \[
  \bigcup\mathcal{C}=\bigcup\mathcal{B}\]
  can be partitioned into an infinite number of clopen sets which is
  a contradiction to the fact that $M$ is t.t.t.\end{proof}
\begin{prop}
  Let $M$ be an $\omega$-saturated one dimensional t.t.t structure
  such that for every basis set $U$, $\vert bd(U)\vert=2$.

  Then for every basis set $U$ there exists a basis set $V\subset U$
  such that for every point $x\in V$ there exists a basis set $W$
  such that $bd(W)\cap V=\{x\}$.\label{pro:every-basis-boundary-2}\end{prop}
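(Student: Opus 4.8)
The plan is to mirror the structure of the proof of Proposition \ref{prop:boundary-only-x}, replacing the appeal to the auxiliary function $F$ (via Proposition \ref{prop:F-function}) by an argument that exploits the two-point-boundary hypothesis through Lemmas \ref{lem:connected-same-boundary} and \ref{lem:finite-basis-sets}. As there, I would first reduce to the case in which every point of $M_{t}$ is a boundary point of some basis set: the set of points that are not has empty interior (a basis set whose closure avoids it would have its whole boundary avoiding it), hence is finite and may be discarded. Fix a basis set $U$ and define the symmetric definable relation $f(x)=\{y\neq x\mid \exists\text{ basis set }W,\ \{x,y\}\subseteq bd(W)\}$; since $\vert bd(W)\vert=2$, membership $y\in f(x)$ means exactly that $\{x,y\}=bd(W)$ for some basis set $W$, so $f$ is symmetric and $x\notin f(x)$. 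Writing $\Gamma$ for its graph, the conclusion for $U$ is equivalent to the existence of a basis set $V\subseteq U$ such that every $x\in V$ has a partner outside $V$; a point $x\in V$ fails this precisely when $\emptyset\neq f(x)\subseteq V$. I would argue by contradiction, assuming that every basis set $V\subseteq U$ contains such a ``bad'' point.

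Next I would split on whether $f$ is finite-valued on some basis set contained in $U$. If it is not, then $\{x\mid f(x)\text{ is finite}\}$ has empty interior, hence is finite, so there is a basis set $V'\subseteq U$ on which every point has infinitely many partners. Applying the failure hypothesis to $V'$ yields a bad point $x_{1}\in V'$ with $\emptyset\neq f(x_{1})\subseteq V'\subseteq U$, and $f(x_{1})$ is infinite; being infinite and definable it has nonempty interior, so it contains a basis set $V_{2}$, and $x_{1}\notin V_{2}$ since $x_{1}\notin f(x_{1})$. Now apply the failure hypothesis to $V_{2}$: there is a bad point $x_{2}\in V_{2}$ with $f(x_{2})\subseteq V_{2}$. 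But $x_{2}\in V_{2}\subseteq f(x_{1})$, so by symmetry $x_{1}\in f(x_{2})\subseteq V_{2}$, contradicting $x_{1}\notin V_{2}$. This is the clean half of the argument and uses nothing beyond symmetry of $f$.

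It remains to treat the case where $f$ is finite-valued on a basis set $V_{0}\subseteq U$. Here the failure hypothesis says that every basis set $V\subseteq V_{0}$ contains a partner pair $\{x,y\}\subseteq V$; applying this to basis sets $V\ni p$ contained in $V_{0}$ and letting them shrink about an arbitrary $p\in V_{0}$ forces $(p,p)\in\overline{\Gamma}$, i.e. $\Delta_{V_{0}}\subseteq\overline{\Gamma}$. The goal is thus to contradict the accumulation of boundary pairs on the diagonal when $f$ is finite-valued; this is exactly what Proposition \ref{prop:F-function} extracted from $F$, and proving it without $F$ is the main obstacle. I expect the resolution to run as follows: by Lemma \ref{lem:finite-fiber} the fibers of $\overline{\Gamma}$ over $V_{0}$ are finite, so each diagonal point is a genuine limit of off-diagonal points of $\Gamma$, producing basis sets with both boundary points arbitrarily close together inside $V_{0}$. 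Using Lemma \ref{lem:finite-basis-sets} (only finitely many basis sets share a given two-point boundary) and Lemma \ref{lem:connected-same-boundary} (two distinct connected open sets whose boundaries miss one another are disjoint), I would extract from these shrinking boundary pairs an infinite family of pairwise disjoint connected basis sets that are simultaneously clopen in a single definable set, as in the clopen-decomposition argument inside the proof of Lemma \ref{lem:finite-fiber}; this contradicts $d_{M}<\infty$, hence t.t.t-ness.

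The delicate point, which I expect to absorb most of the work, is arranging the accumulating boundary pairs so that the associated connected components are genuinely disjoint rather than nested, and so that the relevant differences are captured by one definable set whose boundary points are controlled; this is precisely where $\vert bd(U)\vert=2$ is essential, since it makes each basis-set boundary contribute a single unordered pair and lets Lemma \ref{lem:connected-same-boundary} separate the components. Once a separating basis set $W\subseteq V_{0}$ with $(W\times W)\cap\Gamma=\emptyset$ is produced, every $x\in W$ is, after the reduction, a boundary point of some basis set $W'$ whose other boundary point must then lie outside $W$, so $bd(W')\cap W=\{x\}$ and $V=W$ witnesses the proposition.
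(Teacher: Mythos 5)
Your setup and your first case are sound: the reduction to the situation where every point lies on the boundary of some basis set, the observation that (under $\vert bd(W)\vert=2$) the relation $y\in f(x)$ is symmetric, the reformulation of the conclusion as ``there is a basis set $V\subseteq U$ containing no point $x$ with $\emptyset\neq f(x)\subseteq V$'', and the symmetry argument in the infinite-valued case are all correct. That argument is in fact a clean alternative to the corresponding case of the paper, which proceeds even more directly: if $f(u)\cap U$ is infinite for some $u\in U$, then any basis set $V\subseteq f(u)\cap U$ with $u\notin V$ already witnesses the proposition, with no contradiction framework needed.

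The genuine gap is your second case, where $f$ is finite-valued on a basis set $V_{0}\subseteq U$. There you do not give a proof but a plan (``I expect the resolution to run as follows\dots'', ``the delicate point, which I expect to absorb most of the work\dots''), and the obstacle you flag is real, not cosmetic: when $f$ is finite-valued, the failure hypothesis gives partner pairs accumulating on the diagonal, hence basis sets whose two boundary points cluster at a point, but those basis sets may be nested (shrinking intervals around a point of a line are exactly such a configuration), and a nested family produces no infinite clopen partition of any definable set, so no contradiction with t.t.t-ness follows from Lemmas \ref{lem:finite-fiber}, \ref{lem:finite-basis-sets} and \ref{lem:connected-same-boundary} as you invoke them. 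The missing ingredient is the single definable set avoided by all of the relevant boundaries, and nothing in your sketch produces it. The paper handles this case by an entirely different device: since each point of $V_{0}$ has finitely many partners, Lemma \ref{lem:finite-basis-sets} together with $\omega$-saturation yields a uniform bound $N$ on the number of basis sets $W$ contained in the ambient basis set that have a given point on their boundary; one then runs a downward induction, at each step picking $v_{i}\in V_{i}$ and passing to a basis set $V_{i-1}$, avoiding $v_{i}$, inside the infinite set of points of $V_{i}$ lying on the boundary of some basis set containing $v_{i}$, so that for every point of $V_{i-1}$ at least one witnessing basis set (the one through $v_{i}$) is lost and the count strictly decreases. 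After $N$ steps this produces a basis set none of whose points lies on the boundary of any basis subset of it, which is absurd. Since your Case B assumption places you exactly in the situation this induction handles (every point of $V_{0}$ has finitely many partners, in particular finitely many inside $V_{0}$), replacing your sketch by this induction would complete your proof; as written, it is not complete.
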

\begin{proof}
  As before, without loss of generality we can assume that for every
  $x\in M_{t}$ there is some basis set $W$ such that $x\in bd(W)$. 

  We also use the function $f:U\rightarrow\mathcal{P}(M_{t})$ defined
  above by\[
  f(x)=\{y\neq x\vert\textrm{there exists a basis set }W\textrm{ such that }\{x,y\}\subset bd(W)\}\]

  Let $U$ be some basis set. First let's assume that there exists a
  point $u\in U$ such that $\vert f(u)\cap U\vert=\infty$. Since $M_{t}$ is
  Hausdorff, there exists some basis set $V\subset f(u)\cap U$ such
  that $u\notin V$. $V$ clearly satisfies the requirements of the
  proposition. 

  On the other hand, assume that for each point $u\in U$, $\vert f(u)\cap U\vert<\infty$.
  By lemma \ref{lem:finite-basis-sets} this means that there are only
  a finite number of basis sets $W\subset U$ such that $u\in bd(W)$.
  By the $\omega$-saturation this means that there exists some number
  $N\in\mathbb{N}$ such that for each point $u\in U$ there are at
  most $N$ basis sets $W\subset U$ such that $x\in bd(W)$. 

  We'll now show using downward induction that there exists a basis
  set $V\subset U$ such that for every point $v\in V$, there are no
  basis sets $W\subset V$ such that $v\in bd(W)$ which is clearly
  a contradiction.

  Assume that we found a basis set $V_{i}$ for $0<i\leq N$ such that
  for every point $v\in V_{i}$, there are at most $i$ basis sets $W\subset U$
  such that $x\in bd(W)$. Let $v_{i}$ be some point in $V_{i}$ and
  let $X$ be the set of points $x\in V_{i}$ such that there exists
  a basis set $W$ with $v_{i}\in W$ and $x\in bd(W)$. Again by the
  fact that $M_{t}$ is Hausdorff it follows that $X$ is infinite.
  We choose $V_{i-1}$ to be some basis set such that $V_{i-1}\subset X$
  and $v_{i}\notin V_{i-1}$.

  Now, let $x$ be some point in $V_{i-1}$. Since $x\in V_{i}$, there
  are at most $i$ basis sets $W\subset V_{i}$ such that $x\in bd(W)$.
  However, one of these sets contains $v_{i}$ which isn't an element
  in $V_{i-1}$. Therefore, there are at most $i-1$ basis sets $W\subset V_{i-1}$
  such that $x\in bd(W)$. 

  This finishes the downward induction and the proposition.\end{proof}
\begin{thm}
  Let $M$ be a 1-dimensional $\omega$-saturated t.t.t structure
  such that one of the following holds:
  \begin{enumerate}
  \item There exist a definable continuous function $F:M_{t}^{2}\rightarrow M_{t}$
    and a point $a\in M_{t}$ such that for each $x\in M_{t}$, $F(x,x)=a$
    and $F(x,\cdot)$ is injective.
  \item For every basis set $U$, $\vert bd(U)\vert=2$.
  \end{enumerate}
  Then for all but a finite number of points, for every point $x\in M_{t}$
  there's a basis set $U$ containing $x$ such that $U$ is o-minimal.\label{thm:o-min-neighborhood}\end{thm}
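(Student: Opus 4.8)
The plan is to funnel both hypotheses into a single combinatorial conclusion and then feed that conclusion into the splitting machinery of Section~\ref{sec:Structures-With-Splitting}. In case (1) I would invoke Proposition~\ref{prop:boundary-only-x}, and in case (2) Proposition~\ref{pro:every-basis-boundary-2}; both deliver the identical statement that every basis set $U$ contains a basis set $V$ such that for every $x\in V$ there is a basis set $W$ with $bd(W)\cap V=\{x\}$. Call such a $V$ a \emph{splitting set}. The first step is to check that a splitting set genuinely splits: for $x\in V$ with witness $W$, the set $W\cap(V\setminus\{x\})$ is clopen in $V\setminus\{x\}$ because its boundary inside $V$ is contained in $bd(W)\cap V=\{x\}$, and it is nonempty and proper because $x\in bd(W)$ forces points of both $W$ and its complement into the open neighbourhood $V$. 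Hence $d_M(V\setminus\{x\})\geq 2$ for every $x\in V$, and the same holds for each connected component $D$ of $V$.

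Next I would isolate the exceptional set globally. Let $G$ be the union of all splitting sets. Since the splitting property of $V$ is witnessed at \emph{every} point of $V$, any splitting set is contained in $G$, so $G$ is open; and the common conclusion above shows that every basis set contains a splitting set, so $G$ is dense. Crucially, $G$ is definable: the predicate cutting out a splitting set uses only the definable boundary operator and the definable family of basis sets, and in particular never refers to the relation $\sim$, which need not be globally definable in the present non-splitting setting. Therefore $G^{c}$ is a definable set with empty interior, and by part (3) of Lemma~\ref{lem:Let--be} it is finite.

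For a point $x\in G$ I would produce the o-minimal neighbourhood as follows. Choose a splitting set $V\ni x$ and let $D$ be the connected component of $V$ containing $x$; then $D$ is a connected open definable set with $d_M(D\setminus\{y\})\geq 2$ for all $y\in D$, so the remark following Theorem~\ref{thm:remove-finite-o-minimal} applies to $D$ and yields a finite set $X_{D}\subseteq D$ whose complement in $D$ is a union of o-minimal connected components. If $x\notin X_{D}$, then $x$ lies in an o-minimal open component $C$; taking any basis set $U$ with $x\in U\subseteq C$ gives an o-minimal basis neighbourhood, since a definable open subset of an o-minimal set is a finite union of intervals and the concatenation of the induced orders on the (boundedly many, hence uniformly definable) intervals is a definable dense linear order inducing its topology, so $U$ is o-minimal by \cite[6.2]{P1}.

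It remains to bound the bad points, and this is where I expect the real work to lie. Let $S$ be the set of points with no o-minimal basis neighbourhood. The previous paragraph shows that $S\cap V$ is contained in $\bigcup_{D}X_{D}$, taken over the finitely many components $D$ of $V$, together with the finite set of boundary points between components; hence $S\cap V$ is finite for every splitting set $V$, and since every nonempty open set contains a splitting set, $S$ has empty interior. If $S$ can be shown to be definable, part (3) of Lemma~\ref{lem:Let--be} finishes the proof. The main obstacle is exactly this definability: $X_{D}$ is the set of non-locally-flat points of $D$, and local flatness is phrased through $\sim$, which is only available on splitting sets rather than on all of $M_{t}$. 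I would resolve this by writing a uniform definition valid on $G$: local flatness of $x$ is expressed by quantifying over a splitting set $V\ni x$, a basis neighbourhood $U\subseteq V$ of $x$, and separating witnesses inside $V$, each of which is definable by Proposition~\ref{prop:definable-relation} applied \emph{inside} $V$. This makes $\{x\in G : x\text{ is not locally flat}\}$ definable, and since every locally flat point of $G$ has an o-minimal neighbourhood by the localized Propositions~\ref{prop:finite-not-flat} and~\ref{prop:locally-flat-set-o-minimal}, we obtain $S=G^{c}\cup\{x\in G : x\text{ not locally flat}\}$, which is definable with empty interior and therefore finite.
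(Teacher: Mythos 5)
Your proposal reproduces the paper's proof of Theorem \ref{thm:o-min-neighborhood} in its mathematical core: both hypotheses are funneled through Propositions \ref{prop:boundary-only-x} and \ref{pro:every-basis-boundary-2} into the existence, inside every basis set, of a basis set $V$ each of whose points disconnects it; then Theorem \ref{thm:remove-finite-o-minimal}, via the remark immediately following it, is applied to a connected component of $V$ to produce o-minimal pieces and hence points with o-minimal basis neighborhoods. (Your clopen argument for $d_{M}(V\backslash\{x\})\geq2$ is the paper's unstated justification of the same claim; note that properness still needs a word, since a priori nothing rules out $V\subset\overline{W}$, but the paper makes the identical leap.) Where you genuinely diverge is the global count of exceptional points. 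The paper disposes of it in its opening sentence, ``it's enough to prove that for every basis set $U$ there exists a point $x_{0}\in U$ with an o-minimal neighborhood,'' and never returns to it; you decline that reduction, introduce the set $G$ of points covered by splitting sets, correctly observe that $G$ is definable (splitting is a first-order property of the defining parameter) so that $G^{c}$ is definable with empty interior and hence finite, and then try to show that the set $S$ of bad points is definable. Your instinct here is sound: the paper's reduction is only valid if $S$ is definable or otherwise uniformly controlled, since empty interior implies finiteness only for definable sets (Lemma \ref{lem:Let--be}(3) and the remark after it), and o-minimality of a neighborhood is not an obviously first-order property of a point. So you have surfaced a step the paper elides rather than introduced a new error.

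That said, your proposed repair of this step is not complete as written. You want to define local flatness uniformly on $G$ by quantifying over splitting sets $V\ni x$ and invoking Proposition \ref{prop:definable-relation} ``inside $V$.'' But that proposition is proved for a single fixed structure by a compactness argument over its type space: for each $V$ separately it yields some defining formula for the relation $\sim$ computed inside $V$, and the shape of that formula depends on $V$. To quantify over the infinite definable family of splitting sets in first-order logic you need these relations to be definable uniformly in the parameter defining $V$ --- a relative-to-parameters version of Proposition \ref{prop:definable-relation} that neither you nor the paper proves, and that does not follow formally from the proposition as stated. The honest summary: your proof coincides with the paper's wherever the paper actually argues, and at the single point where the paper substitutes assertion for argument you have correctly located the difficulty (definability of the exceptional set) and sketched a plausible but unfinished route through it.
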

\begin{proof}
  It's enough to prove that for every basis set $U$ there
  exists a point $x_{0}\in U$ with an o-minimal neighborhood.

  Let $U$ be a basis set. By propositions \ref{prop:boundary-only-x}
  and \ref{pro:every-basis-boundary-2}, there exists a basis set $V\subset U$
  such that for every point $x\in V$ there exists a basis set $W$
  such that $bd(W)\cap V=\{x\}$. This means that for every point $x\in V$,
  $V\backslash\{x\}$ has at least two connected components. Without loss 
  of generality $V$ is connected. By theorem
  \ref{thm:remove-finite-o-minimal} (and the remark immediately after it), 
  this means that after removing
  a finite number of points from $V$ the remaining connected components
  are o-minimal. Let $C$ be one of the o-minimal components and let
  $x_{0}$ be some point in $C$. Clearly $x_{0}$ has an o-minimal
  neighborhood.\end{proof}
\begin{cor}
  Let $M$ be an $\omega$-saturated one dimensional t.t.t structure
  which  admits a topological group structure . Then all but a finite number of points
  have an o-minimal neighborhood.\end{cor}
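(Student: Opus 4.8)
The plan is to reduce the corollary directly to part (1) of Theorem~\ref{thm:o-min-neighborhood} by manufacturing the required function $F$ out of the group operation. Since $M$ admits a topological group structure, there is a definable continuous multiplication $\cdot : M_{t}^{2} \to M_{t}$ with a definable continuous inversion $x \mapsto x^{-1}$ and identity element $e$. The natural candidate is
\[
F(x,y) = x^{-1} \cdot y.
\]
First I would check that $F$ is definable and continuous: it is the composition of inversion in the first coordinate with multiplication, both of which are definable and continuous by the assumption that the group structure is definable and topological, so $F$ inherits both properties.

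Next I would verify the two pointwise hypotheses of part (1). Taking $a = e$, the diagonal condition is immediate, since $F(x,x) = x^{-1} \cdot x = e = a$ for every $x \in M_{t}$. For the injectivity of $F(x,\cdot)$, suppose $F(x,y_{1}) = F(x,y_{2})$, i.e.\ $x^{-1} \cdot y_{1} = x^{-1} \cdot y_{2}$; multiplying on the left by $x$ and using the group axioms gives $y_{1} = y_{2}$, so $F(x,\cdot)$ is injective by left cancellation.

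With these verifications in hand, $F$ and $a = e$ satisfy exactly the hypotheses of clause (1) of Theorem~\ref{thm:o-min-neighborhood}, so the conclusion of that theorem applies verbatim: all but a finite number of points of $M_{t}$ have an o-minimal basis neighborhood. I do not expect any genuine obstacle here, as the argument is a direct verification from the group axioms; the only points requiring mild care are confirming that the topological group structure supplies \emph{definable} multiplication and inversion (so that $F$ is a definable function in the sense required by the theorem) and that continuity of inversion is part of the topological group assumption rather than something that needs to be derived.
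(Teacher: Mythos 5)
Your proposal is correct and is essentially the paper's own proof: the paper defines $F(x,y)=x-y$ (i.e.\ $x\cdot y^{-1}$ in multiplicative notation) and invokes part (1) of Theorem \ref{thm:o-min-neighborhood}, while you use $F(x,y)=x^{-1}\cdot y$, an immaterial variant satisfying the same hypotheses. Your verification of continuity, the diagonal condition $F(x,x)=e$, and injectivity via cancellation is exactly the content the paper compresses into the phrase ``clearly satisfies the conditions.''
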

\begin{proof}
  We define a function $F:M_{t}^{2}\rightarrow M_{t}$ by\[
  F(x,y)=x-y\]
  The function $F$ clearly satisfies the conditions of theorem \ref{thm:o-min-neighborhood}.
\end{proof}
\begin{prop}
  Let $M$ be an $\omega$-saturated one dimensional t.t.t structure
  such that all but a finite number of points have an o-minimal neighborhood.
  Let $f:M_{t}\rightarrow M_{t}$ be definable function. Then $f$ is
  continuous for all but a finite number of points.\end{prop}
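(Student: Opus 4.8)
The plan is to show that the set of discontinuity points of $f$ is definable, and that were it infinite it would contain an open set on which I could locally invoke the o-minimal monotonicity theorem to manufacture a point of continuity, a contradiction.

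First I would verify that
\[
\mathrm{Disc}(f)=\{x\in M_t : f\text{ is not continuous at }x\}
\]
is definable. Continuity of $f$ at $x$ is expressed by the first order formula stating that for every $b\in M_b$ with $f(x)\in\phi^{M_t}(b)$ there is $c\in M_b$ with $x\in\phi^{M_t}(c)$ and $f(\phi^{M_t}(c))\subseteq\phi^{M_t}(b)$; its negation defines $\mathrm{Disc}(f)$. Now suppose for contradiction that $\mathrm{Disc}(f)$ is infinite. By Lemma \ref{lem:Let--be}, an infinite definable subset of $M_t$ has nonempty interior, so $\mathrm{Disc}(f)$ contains a nonempty open set $O$. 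Since only finitely many points of $M_t$ lack an o-minimal neighborhood and points are closed, $O$ minus the finite bad set is again open and nonempty, hence contains a point with an o-minimal neighborhood; intersecting that neighborhood with $O$ and passing to a connected component yields a connected open definable set $U\subseteq\mathrm{Disc}(f)$ which is o-minimal and on which $f$ is discontinuous at every point.

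Next I would arrange that $f$ carries a subinterval of $U$ into a single o-minimal chart. For each of the finitely many bad target points $y$, the fiber $f^{-1}(y)\cap U$ is definable, hence (Lemma \ref{lem:Let--be}) finite or with interior; an interior would make $f$ locally constant and therefore continuous there, contradicting $U\subseteq\mathrm{Disc}(f)$, so each such fiber is finite. Deleting these finitely many points and passing to a connected component, I may assume that every value $f(x)$, $x\in U$, has an o-minimal neighborhood. The image $f(U)$ is definable and cannot be finite, since a finite image would force an infinite, hence interior-bearing, fiber and again produce local constancy and continuity; thus $f(U)$ is infinite, has interior, and, as all its points are good, contains an open o-minimal set $V$. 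Then $f^{-1}(V)\cap U$ has image $f(U)\cap V=V$, so it is infinite and has interior, and choosing an open o-minimal set $U'$ inside it gives $f(U')\subseteq V$ with both $U'$ and $V$ o-minimal.

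Finally, I would apply the o-minimal monotonicity theorem to the definable map $f|_{U'}\colon U'\to V$. By Proposition \ref{prop:locally-flat-set-o-minimal} the order topologies on $U'$ and on $V$ each coincide with the subspace topology inherited from $M_t$, so continuity of $f|_{U'}$ as a map into the o-minimal line $V$ is the same as continuity of $f$ into $M_t$ at points of $U'$; the theorem then provides a point $x'\in U'$ at which $f|_{U'}$ is continuous, and since $V$ is open this $x'$ is a point of continuity of $f$, contradicting $U'\subseteq U\subseteq\mathrm{Disc}(f)$. The main obstacle is exactly this last reconciliation: the monotonicity theorem is a genuinely one dimensional statement about a definable function into a single o-minimal line, so the crux of the argument is the preceding reduction forcing $f$ to carry one o-minimal chart $U'$ into another single o-minimal chart $V$, together with the observation that the two a priori distinct order topologies both agree with the ambient topology, so that \emph{continuity} is unambiguous.
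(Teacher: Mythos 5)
Your proof is correct and takes essentially the same route as the paper's: after disposing of the locally-constant case, both arguments reduce to a definable map from one o-minimal chart $U'$ into another o-minimal chart $V$ and invoke the monotonicity theorem to produce a point of continuity. The only differences are organizational --- the paper argues directly that every basis set contains a continuity point (leaving the definability of $\mathrm{Disc}(f)$ and the agreement of the order topology with the ambient topology implicit), whereas you run the same reduction as a contradiction from an infinite definable $\mathrm{Disc}(f)$ and make those two glossed-over points explicit.
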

\begin{proof}
  We'll show that in every basis set $U$ there is a point $x_{0}\in U$
  such that $f$ is continuous at $x_{0}$.

  Let $U$ be a basis set and let $\Gamma$ be the graph of $f$. 

  If the projection of $\Gamma\cap(U\times M_{t})$ onto the second coordinate
  is finite then there exists a basis set $V\subset U$ such that $f$ is constant on $V$.

  On the other hand, if the projection is infinite then there exists some point
  $y\in M_{t}$ with an o-minimal neighborhood such that for any basis set $V$ containing $y$ there
  exists some $x\in U$ such that $y\neq f(x)\in V$. Let $V$ be an o-minimal basis set containing $y$.
  By the choice of $y$, $f^{-1}(V)\cap U$ is infinite so by the assumption
  there exists some o-minimal basis set $W\subset U$ such that $f(W)\subset V$.
  Since both $W$ and $V$ are o-minimal, by the monotonicity theorem
  there exists some $x_{0}\in W\subset U$ such that $f$ is continuous
  at $x_{0}$.
\end{proof}
We conclude this section by giving two examples of theorem \ref{thm:o-min-neighborhood}.
\begin{example}
  For the first example we return to the unit circle mentioned in the
  beginning of the section. We'll look at the structure $S=\langle S^{1},R(x,y,z)\rangle$
  where $R(x,y,z)$ is true if $x$ and $y$ are not opposite each other
  and $z$ lies on the short arc between $x$ and $y$. If we define
  the set of basis sets as\[
  \{R^{S}(a,b,z)\vert a,b\in S\}\]
  then $S$ is a 1-dimensional $\omega$-saturated t.t.t structure. In addition,
  for every basis set $U$, $\vert bd(U)\vert=2$ so by theorem \ref{thm:o-min-neighborhood}
  $S$ is locally o-minimal. This is indeed true as locally $S$ looks
  like the structure $R_{int}$ from example \ref{exa:interval-example}
  which was shown to be o-minimal.
\end{example}

\begin{example}
  This example is a slight variant of $R_{int}=\langle\mathbb{R},I(x,y,z)\rangle$.
  Let's define the relation $RI(x,y,z)$ in $R=\langle\mathbb{R},+,\cdot,0,1,<\rangle$
  by:

  \[
  RI(x,y,z)\iff(x<z<y)\wedge(-1<x-y<1)\]
  So $RI$ is a version of $I$ restricted to intervals with a length
  of less than 1. 

  Let $R\prec\overline{R}$ be an $\omega$-saturated elementary extension
  and let $\overline{\mathbb{R}}$ be the universe of $\overline{R}$.
  We define $\overline{R}_{rint}=\langle\overline{\mathbb{R}},RI\rangle$
  to be the restriction of $\overline{R}$ to the language $\{RI\}$.

  Clearly $\overline{R}_{rint}$ is $\omega$-saturated. In addition,
  since $\overline{R}$ is o-minimal, $\overline{R}_{rint}$ is a one
  dimensional t.t.t structure. 

  However, for any point $a\in\overline{\mathbb{R}}$, $\overline{\mathbb{R}}\backslash\{a\}$
  has only one definably connected component in $R_{rint}$. Because
  assume for contradiction that the sets \[
  A_{+}=\{x\in\overline{\mathbb{R}}\vert x>a\}\]
  \[
  A_{-}=\{x\in\overline{\mathbb{R}}\vert x<a\}\]

  were definable in $\overline{R}_{rint}$ using the constants $c_{1},\dots,c_{N}$.
  Let's define subsets\[
  \tilde{A}_{+}=\{x\in\overline{\mathbb{R}}\vert(x>a)\wedge(\forall n<N\forall k(x>c_{n}+k)\}\]

  \[
  \tilde{A}_{-}=\{x\in\overline{\mathbb{R}}\vert(x<a)\wedge(\forall n<N\forall k(x<c_{n}+k)\}\]

  By the definition of $RI$, an automorphism of $\overline{\mathbb{R}}$ which
  swaps $\tilde{A}_{+}$ and $\tilde{A}_{-}$ is an automorphism of
  $\overline{R}_{rint}$ together with the constants $c_{1},\dots,c_{N}$
  which is a contradiction.

  On the other hand, the basis sets of $R_{rint}$ have two boundary
  points so by theorem \ref{thm:o-min-neighborhood}, every point in
  $R_{rint}$ has an o-minimal neighborhood. This makes sense because
  for any point $a\in\overline{\mathbb{R}}$ and interval $U$ containing
  $a$ with a length of less than one, we can define an order on $U$
  in the same way that we defined an order on $R_{int}$ in example
  \ref{exa:interval-example}. 
\end{example}

\subsection*{Acknowledgement}I'd like to thank Ehud Hrushovski for providing valuable insights and guidance throughout
the time spent working on this paper.

\bibliographystyle{amsplain}
\bibliography{ttt-arxiv}

\end{document}